\providecommand{\tabularnewline}{\\}
\numberwithin{equation}{section}
\numberwithin{figure}{section}
\newenvironment{lyxlist}[1]
{\begin{list}{}
{\settowidth{\labelwidth}{#1}
 \setlength{\leftmargin}{\labelwidth}
 \addtolength{\leftmargin}{\labelsep}
 }}
{\end{list}}
  \theoremstyle{plain}
  \newtheorem*{lem*}{\protect\lemmaname}
\theoremstyle{plain}
\newtheorem{thm}{\protect\theoremname}[section]
  \theoremstyle{plain}
  \newtheorem{prop}[thm]{\protect\propositionname}
  \theoremstyle{plain}
  \newtheorem{lem}[thm]{\protect\lemmaname}
  \theoremstyle{remark}
  \newtheorem{rem}[thm]{\protect\remarkname}
  \theoremstyle{plain}
  \newtheorem{cor}[thm]{\protect\corollaryname}
  \providecommand{\corollaryname}{Corollary}
  \providecommand{\lemmaname}{Lemma}
  \providecommand{\propositionname}{Proposition}
  \providecommand{\remarkname}{Remark}
\providecommand{\theoremname}{Theorem}
\begin{document}

\title{Fundamental Solution to 1D Degenerate Diffusion Equation with Locally
Bounded Coefficients}

\author{Linan Chen\textsuperscript{{*}} and Ian Weih-Wadman\textsuperscript{\dag{}}}
\begin{abstract}
In this work we study the degenerate diffusion equation $\partial_{t}=x^{\alpha}a\left(x\right)\partial_{x}^{2}+b\left(x\right)\partial_{x}$
for $\left(x,t\right)\in\left(0,\infty\right)^{2}$, equipped with
a Cauchy initial data and the Dirichlet boundary condition at $0$.
We assume that the order of degeneracy at 0 of the diffusion operator
is $\alpha\in\left(0,2\right)$, and both $a\left(x\right)$ and $b\left(x\right)$
are only locally bounded. We adopt a combination of probabilistic
approach and analytic method: by analyzing the behaviors of the underlying
diffusion process, we give an explicit construction to the fundamental
solution $p\left(x,y,t\right)$ and prove several properties for $p\left(x,y,t\right)$;
by conducting a localization procedure, we obtain an approximation
for $p\left(x,y,t\right)$ for $x,y$ in a neighborhood of 0 and $t$
sufficiently small, where the error estimates only rely on the local
bounds of $a\left(x\right)$ and $b\left(x\right)$ (and their derivatives).
There is a rich literature on such a degenerate diffusion in the case
of $\alpha=1$. Our work extends part of the existing results to cases
with more general order of degeneracy, both in the analysis context
(e.g., heat kernel estimates on fundamental solutions) and in the
probability view (e.g., wellposedness of stochastic differential equations). 
\end{abstract}

\keywords{degenerate diffusion equation, locally bounded coefficients, estimates
on the fundamental solution, generalized Wright-Fisher equation, stochastic
differential equation, wellposedness of stochastic differential equations }

\address{{*}Department of Mathematics and Statistics, McGill University, 805
Sherbrooke St. West, Montreal, Quebec, H3A 0B9, Canada. }

\address{\dag Department of Mathematics and Statistics, McMaster University,
1280 Main St. West, Hamilton, Ontario, L8S 4K1, Canada. }

\email{{*}linan.chen@mcgill.ca }

\email{\dag weihwadi@mcmaster.ca}

\thanks{The first author was partially supported by the NSERC Discovery Grant
(No. 241023).}

\subjclass[2000]{\noindent 35K20, 35K65, 35Q92}
\maketitle

\section{Introduction}

In this article we consider the following Cauchy initial value problem
with the Dirichlet boundary condition: 
\begin{equation}
\begin{array}{c}
\partial_{t}u_{f}\left(x,t\right)=x^{\alpha}a\left(x\right)\partial_{x}^{2}u_{f}\left(x,t\right)+b\left(x\right)\partial_{x}u_{f}\left(x,t\right)\text{ for }\left(x,t\right)\in\left(0,\infty\right)^{2},\\
\lim_{t\searrow0}u_{f}\left(x,t\right)=f\left(x\right)\text{ for }x\in\left(0,\infty\right)\text{ and }\lim_{x\searrow0}u_{f}\left(x,t\right)=0\text{ for }t\in\left(0,\infty\right),
\end{array}\label{eq:IVP general equation}
\end{equation}
where $f\in C_{c}\left(\left(0,\infty\right)\right)$ and $\alpha\in\left(0,2\right)$.
Set $L:=x^{\alpha}a\left(x\right)\partial_{x}^{2}+b\left(x\right)\partial_{x}$.
We further impose the following assumptions on $a\left(x\right)$
and $b\left(x\right)$:\\
\begin{lyxlist}{00.00.0000}
\item [{\textbf{(H1):}}] $a\in C\left([0,\infty)\right)\cap C^{2}\left(\left(0,\infty\right)\right)$,
$a\left(x\right)>0$ for every $x\in[0,\infty)$ and $a\left(0\right)=1$.
\\
$b\in C\left([0,\infty)\right)\cap C^{1}\left(\left(0,\infty\right)\right)$,
$b\left(0\right)\in[0,1)$ when $\alpha\leq1$, and $b\left(0\right)=0$
when $\alpha>1$. \\
$a\left(x\right)$, $a^{\prime}\left(x\right)$, $a^{\prime\prime}\left(x\right)$,
$b\left(x\right)$ and $b^{\prime}\left(x\right)$ are all bounded
on $(0,I]$ for every $I>0$. \\
\item [{\textbf{(H2):}}] There exists $C>0$ such that for every $x\in[0,\infty)$,
\[
a\left(x\right)\leq C\left(1+x^{2-\alpha}\right)\text{ and }\left|b\left(x\right)\right|\leq C\left(1+x\right).
\]
\end{lyxlist}
Our goal is to construct and to study the fundamental solution $p\left(x,y,t\right)$
to (\ref{eq:IVP general equation}), with a particular emphasis on
the behaviors of $p\left(x,y,t\right)$ when $x,y$ are near the boundary
and when $t$ is small. Since $L$ is degenerate at $0$, standard
methods on strictly parabolic equation no longer apply in this case,
and the degeneracy of $L$ does have an impact on the regularity of
$p\left(x,y,t\right)$. Moreover, the assumptions \textbf{(H1)} and
\textbf{(H2)} only guarantee local boundedness of $a\left(x\right)$,
$b\left(x\right)$ and their derivatives, so we need to conduct our
analysis of $p\left(x,y,t\right)$ only relying on the local bounds
of the coefficients. 

\subsection{Background and motivation}

Our work is primarily motivated by two previous works \cite{deg_diff_global}
and \cite{siamwfeq} on related problems. \cite{deg_diff_global}
treats  the initial/boundary value problem for a degenerate diffusion
equation similar to the one in (\ref{eq:IVP general equation}), but
under stronger conditions on the coefficients. To interpret the hypotheses
adopted in \cite{deg_diff_global} in terms of $\alpha$, $a\left(x\right)$
and $b\left(x\right)$ in our setting, we define the following two
functions for $x>0$:
\begin{equation}
\phi\left(x\right):=\frac{1}{4}\left(\int_{0}^{x}\frac{ds}{s^{\frac{\alpha}{2}}\sqrt{a\left(s\right)}}\right)^{2}\text{ and }\theta\left(x\right):=\frac{1}{2}-\nu+\frac{2b\left(x\right)-\left(x^{\alpha}a\left(x\right)\right)^{\prime}}{2x^{\frac{\alpha}{2}}\sqrt{a\left(x\right)}}\sqrt{\phi\left(x\right)},\label{eq:def of phi =000026 theta}
\end{equation}
where $\nu$ is the constant such that $\lim_{x\searrow0}\theta\left(x\right)=0$.
It is assumed in \cite{deg_diff_global} that $\nu<1$, as well as
\begin{equation}
\sup_{x\in(0,\infty)}\frac{\left|\theta\left(x\right)\right|}{\sqrt{\phi\left(x\right)}}<\infty\text{ and }\sup_{x\in\left(0,\infty\right)}\frac{\left|\theta^{\prime}\left(x\right)\right|}{\phi^{\prime}\left(x\right)}<\infty.\label{eq:assumptions in the global case}
\end{equation}
Under the assumption (\ref{eq:assumptions in the global case}), through
a series of transformations and perturbations, \cite{deg_diff_global}
completes a construction of the fundamental solution $p\left(x,y,t\right)$
to (\ref{eq:IVP general equation}), conducts a careful analysis of
the regularity properties of $p\left(x,y,t\right)$ near the boundary,
and derive an approximations for $p\left(x,y,t\right)$ in terms of
explicitly formulated functions. In particular, if $p^{approx.}\left(x,y,t\right)$
denotes the approximation for $p\left(x,y,t\right)$, then it is proven
in \cite{deg_diff_global} that there exists a constant $C>0$, universal
in all $x,y$ in a neighborhood of $0$ and all $t$ sufficiently
small, such that
\begin{equation}
\left|\frac{p\left(x,y,t\right)}{p^{approx.}\left(x,y,t\right)}-1\right|\leq Ct.\label{eq:estimate p/p^approx. global case}
\end{equation}

Such an estimate is useful in multiple ways. First, while one expects
$p\left(x,y,t\right)$ to resemble the fundamental solution to a strictly
parabolic equation for $x,y$ away from the boundary, (\ref{eq:estimate p/p^approx. global case})
captures accurately the asymptotics of $p\left(x,y,t\right)$ when
$x,y$ are close to the boundary, and demonstrates the influence of
the degeneracy of $L$ on $p\left(x,y,t\right)$. Second, if one could
apply the general heat kernel estimates (see, e.g., $\mathsection4$
of \cite{PDEStroock}) to $p\left(x,y,t\right)$, then one would get
that for every $\delta$ and $t$ sufficiently small, there is constant
$C_{\delta,t}>1$ such that 
\begin{equation}
C_{t,\delta}^{-1}\exp\left(-\frac{d\left(x,y\right)^{2}}{2\left(1-\delta\right)t}\right)\leq p\left(x,y,t\right)\leq C_{t,\delta}\exp\left(-\frac{d\left(x,y\right)^{2}}{2\left(1+\delta\right)t}\right),\label{eq:general heat kernel estimate}
\end{equation}
where $d\left(x,y\right)$ is the distance between $x$ and $y$ under
the Riemannian metric corresponding to $L$; it is clear that (\ref{eq:estimate p/p^approx. global case})
is a sharper estimate than (\ref{eq:general heat kernel estimate})
for small $t$, and hence $p^{approx.}\left(x,y,t\right)$ is a more
accurate short-term approximation for $p\left(x,y,t\right)$ compared
with the general heat kernel approximation. In addition, in \cite{deg_diff_global},
$p^{approx.}\left(x,y,t\right)$ is presented in an explicit formula
(in terms of special functions) and ``$Ct$'' in (\ref{eq:estimate p/p^approx. global case})
can be replaced by an exact expression; therefore, (\ref{eq:estimate p/p^approx. global case})
is easily accessible in computational applications that involve the
fundamental solution to any degenerate diffusion equation in the form
of $\partial_{t}-L=0$. 

We aim to generalize the results in \cite{deg_diff_global}, particularly
the construction of $p\left(x,y,t\right)$ and the short-term near-boundary
approximation $p^{approx.}\left(x,y,t\right)$, to a more general
family of degenerate diffusion equations. The hypotheses \textbf{(H1)}
and \textbf{(H2)} proposed above are more relaxed compared with the
assumption (\ref{eq:assumptions in the global case}) adopted in \cite{deg_diff_global}.
For example, it can be checked with direct computations that in general
(\ref{eq:assumptions in the global case}) does not hold if $b\left(0\right)\neq0$,
which means that, given \textbf{(H1)} and \textbf{(H2)}, (\ref{eq:assumptions in the global case})
is only satisfied when $1\leq\alpha<2$. Moreover, (\ref{eq:assumptions in the global case})
clearly imposes strong global conditions on $a\left(x\right)$ and
$b\left(x\right)$, but with \textbf{(H1)} and \textbf{(H2)}, we have
to find an access to $p\left(x,y,t\right)$ without relying on any
global bound on the coefficients. To tackle this issue, we invoke
a ``localization'' procedure, as inspired by \cite{siamwfeq}.

\cite{siamwfeq} studies the following well known Wright-Fisher diffusion
equation, which has its origin in population genetics:
\begin{equation}
\begin{array}{c}
\partial_{t}u_{f}\left(x,t\right)=x\left(1-x\right)\partial_{x}^{2}u_{f}\left(x,t\right)\text{ for }\left(x,t\right)\in\left(0,1\right)\times\left(0,\infty\right),\\
\lim_{t\searrow0}u_{f}\left(x,t\right)=f\left(x\right)\text{ for }x\in\left(0,1\right)\text{ for some }f\in C_{b}\left(\left(0,1\right)\right),\\
\text{ and }\lim_{x\searrow0}u_{f}\left(x,t\right)=\lim_{x\nearrow1}u_{f}\left(x,t\right)=0\text{ for }t\in\left(0,\infty\right).
\end{array}\label{eq: classical WF eq}
\end{equation}
Different from (\ref{eq:IVP general equation}), (\ref{eq: classical WF eq})
has two-sided Dirichlet boundaries at 0 and 1, and the diffusion operator
degenerates linearly at both boundaries. Set $L_{WF}:=x\left(1-x\right)\partial_{x}^{2}$
and let $p_{WF}\left(x,y,t\right)$ be the fundamental solution to
(\ref{eq: classical WF eq}). Since (\ref{eq: classical WF eq}) is
symmetric on $\left[0,1\right]$, to study $p_{WF}\left(x,y,t\right)$
near the boundaries, it is sufficient to only consider the left boundary
0. In \cite{siamwfeq}, a ``localization'' method is devised to
construct $p_{WF}\left(x,y,t\right)$ near 0: since $p_{WF}\left(x,y,t\right)$
can be viewed as the density of the underlying diffusion process corresponding
to $L_{WF}$, we can acquire information on $p_{WF}\left(x,y,t\right)$
by studying the behaviors of the process near 0; in particular, by
tracking the excursions of the diffusion process near 0, we can ``localize''
$L_{WF}$ and $p_{WF}\left(x,y,t\right)$ within a neighborhood of
0 where only the degeneracy at 0 has a substantial impact. Heuristically
speaking, when restricted near 0, $L_{WF}$ is close to the operator
$x\partial_{x}^{2}$, and hence it is natural to expect that $p_{WF}\left(x,y,t\right)$
with $x,y$ near 0 is close to the fundamental solution $p_{0}\left(x,y,t\right)$
to $\partial_{t}-x\partial_{x}^{2}=0$ (with Dirichlet boundary 0).
Indeed, it is established in \cite{siamwfeq} that, not only can $p_{WF}\left(x,y,t\right)$
be constructed in an explicit way via $p_{0}\left(x,y,t\right)$,
$p_{WF}\left(x,y,t\right)$ is also well approximated by $p_{0}\left(x,y,t\right)$
in the sense that $p_{WF}\left(x,y,t\right)/p_{0}\left(x,y,t\right)$
satisfies (\ref{eq:estimate p/p^approx. global case}) for $x,y$
near 0 and $t$ sufficiently small. In our work we want to adopt a
similar localization procedure and start our investigation of (\ref{eq:IVP general equation})
on a bounded set where the local bounds of the coefficients would
be sufficient for our purposes. 

In addition to treating directly the fundamental solutions, degenerate
diffusion equations in the form of $\partial_{t}-L=0$ have also been
discussed in many other contexts, with most of the existing literature
concerning the case when $\alpha=1$. For example, Epstein \emph{et
al }(\cite{EM11,EM13,EM14,FeynmanKac_HarnackIneq_deg_diff,tran_prob_deg_diff_manifold})
conduct an comprehensive study of the generalized Kimura operators,
which can be viewed as a generalization of $L$ with $\alpha=1$ in
the manifold setting, obtaining results such as the Hölder space of
the solutions, the maximum principle and the Harnack inequality. Related
works on generalized Kimura diffusions include \cite{FeynmanKac_HarnackIneq_deg_diff,tran_prob_deg_diff_manifold,C0_smooth_parab_Kimura_op,ExiUniq_Markov_Kimura_diff_singulardrift}.
From a probabilistic view, there are abundant theories on existence
and uniqueness of solutions to stochastic differential equations with
degenerate diffusion coefficients (see, e.g., \cite{singular_stochastic_diff_equa,Ethier76,est_dist_stoch_integral,deg_SDE_non-Lip_coeff,diff_conti_coeff,path_unique_SDE_non-Lip_coeff,transf_phasesp_diffproc_removedrift}\textcolor{purple}{{}
}and the references therein); when $\alpha=1$, a series of works
(see, e.g., \cite{AthreyaBarlowBassPerkins02,BassPerkins02,BassPerkins08,uniq_law_path_uniq_SDE,strong_Markov_locmart_solu_1D_SDE,Yamada-Watanabe})
provide conditions on $a\left(x\right)$ and $b\left(x\right)$ that
are sufficient for the stochastic differential equation corresponding
to $L$ to be well posed, and some of the results will also be used
later in our discussions. 

Degenerate diffusions have also been treated in the context of the
measure-valued process (see, e.g., \cite{wellposed_mart_deg_diff_dynamic_population,meas_Markov_proc,resolv_est_FV_uniq_marting,FV_processes_popul_gen,measure_value_proc_popul_gen,DW_suerproc_meas_diff}),
as well as via the semigroup approach (see, e.g., \cite{deg_evo_eq_reg_semigroup,deg_selfad_evoeq_unitint,C0_semigroup_diffop_ventcel_bdrycond,deg_parabolic_wentzel_bdrycond,highly_deg_parabolic_bvp,analy_semigroup_deg_ellip_operator_nonlinear_Cauchy}). 

\subsection{Our main results}

Our strategy in solving (\ref{eq:IVP general equation}) and getting
$p\left(x,y,t\right)$ is to combine the ideas and the techniques
from \cite{deg_diff_global} and \cite{siamwfeq}, and tackle the
two challenges we face: general order of degeneracy in $L$ at the
boundary, and lack of global bounds on the coefficients. Below we
briefly describe the main steps we will take to complete this work
(see Table 1 for an illustration). \\

\noindent \emph{1. Localization and transformation }($\mathsection2.1$).
Since the coefficients in $L$ are locally bounded, we first consider
a ``localized'' version of (\ref{eq:IVP general equation}). Given
$I>0$, we study the diffusion equation in (\ref{eq:IVP general equation})
on $\left(0,I\right)$ with an extra Dirichlet boundary at $I$, i.e.,
\[
\begin{array}{c}
\partial_{t}u\left(x,t\right)=Lu\left(x,t\right)\text{ for }\left(x,t\right)\in\left(0,I\right)\times\left(0,\infty\right)\\
\text{with }u\left(x,t\right)\rightarrow0\text{ as }x\searrow0\text{ or }x\nearrow I\text{ for }t\in\left(0,\infty\right).
\end{array}\quad\left(\star\right)
\]
Let $p_{I}\left(x,y,t\right)$ be the fundamental solution to $\left(\star\right)$.
To solve $\left(\star\right)$, we carry out a transformation that
turns $L$ into a diffusion operator that degenerates linearly at
$0$. In fact, with a change of variable $x\mapsto z$, solving $\left(\star\right)$
becomes equivalent to solving the following problem:
\[
\begin{array}{c}
\begin{array}{c}
\partial_{t}v^{V}\left(z,t\right)=\left(z\partial_{z}^{2}+\nu\partial_{z}+V\left(z\right)\right)v^{V}\left(z,t\right)\text{ for }\left(z,t\right)\in\left(0,J\right)\times\left(0,\infty\right)\\
\text{with }v^{V}\left(z,t\right)\rightarrow0\text{ as }z\searrow0\text{ or }z\nearrow J\text{ for }t\in\left(0,\infty\right),
\end{array}\end{array}\quad\left(\dagger\right)
\]
where $J$ is the image of $I$ after the change of variable, $\nu<1$
is a constant, and $V\left(z\right)$ is a function on $\left(0,J\right)$
($J$, $\nu$ and $V$ will be specified in $\mathsection2.1$). If
we can find the fundamental solution to $\left(\dagger\right)$, denoted
by $q_{J}^{V}\left(z,w,t\right)$, then $p_{I}\left(x,y,t\right)$
can be obtained through $q_{J}^{V}\left(z,w,t\right)$ via the transformation
(and its inverse) between $\left(\star\right)$ and $\left(\dagger\right)$.\\

\noindent \emph{2. Model equation }($\mathsection2.2$). Our strategy
for solving $\left(\dagger\right)$ is to treat the operator $z\partial_{z}^{2}+\nu\partial_{z}+V\left(z\right)$
as a perturbation of $z\partial_{z}^{2}+\nu\partial_{z}$. We temporarily
return to the ``global'' view, omit the potential $V\left(z\right)$
and the right boundary $J$, and consider the following \emph{model
equation} on the entire $\left(0,\infty\right)$:
\[
\begin{array}{c}
\partial_{t}v\left(z,t\right)=\left(z\partial_{z}^{2}+\nu\partial_{z}\right)v\left(z,t\right)\text{ for every }\left(z,t\right)\in\left(0,\infty\right)^{2}\\
\text{with }v\left(z,t\right)\rightarrow0\text{ as }z\searrow0\text{ for }t\in\left(0,\infty\right).
\end{array}
\]
This model equation has the advantage that its fundamental solution
$q\left(z,w,t\right)$ has an explicit formula in terms of a Bessel
function (\cite{deg_diff_global}), and properties of the solutions
to the model equation are already known to us (Proposition\ref{prop:results on q(z,w,t)}).
With $q\left(z,w,t\right)$ in hand, we return to the local view of
the model equation (with the Dirichlet boundary condition ``restored''
at $J$) and derive the fundamental solution $q_{J}\left(z,w,t\right)$
to the localized model equation on $\left(0,J\right)$ (Proposition
\ref{prop:properties of q_J}).\\

\noindent \emph{3. Solving the localized equation }($\mathsection3$).
Upon getting $q_{J}\left(z,w,t\right)$, we can start the construction
of the fundamental solutions to $\left(\dagger\right)$ and $\left(\star\right)$.
Viewing $z\partial_{z}^{2}+\nu\partial_{z}+V\left(z\right)$ as a
perturbation of $z\partial_{z}^{2}+\nu\partial_{z}$ with a potential
function $V\left(z\right)$, we invoke Duhamel's perturbation method
to construct $q_{J}^{V}\left(z,w,t\right)$ using $q_{J}\left(z,w,t\right)$
as the ``building block'' (Proposition \ref{prop:properties of q^V_J}).
Although in general $q_{J}^{V}\left(z,w,t\right)$ does not have a
closed-form formula and our representation of $q_{J}^{V}\left(z,w,t\right)$
is in the form of a series, by focusing on the first term of the series
expression we can show that $q_{J}^{V}\left(z,w,t\right)$ is well
approximated by $q\left(z,w,t\right)$ for sufficiently small $t$
(Proposition \ref{prop: approximation for q^V_J}). \\

\noindent \emph{4. Solving the global equation} ($\mathsection4$).
We finally return to (\ref{eq:IVP general equation}) and produce
$p\left(x,y,t\right)$ from $p_{I}\left(x,y,t\right)$ by ``reversing''
the localization procedure. More specifically, we establish the relation
between (\ref{eq:IVP general equation}) and its localized version
$\left(\star\right)$ with the help of the underlying diffusion process
corresponding to $L$. By analyzing the excursions of the diffusion
process over $\left(0,I\right)$, $p\left(x,y,t\right)$ is achieved
as the limit of $p_{I}\left(x,y,t\right)$ as $I$ increases to infinity
(Theorem \ref{thm:main theorem}). Again, although $p\left(x,y,t\right)$
does not have a closed-form formula, we find an approximation $p^{approx.}\left(x,y,t\right)$
for $p\left(x,y,t\right)$ such that $p^{approx.}\left(x,y,t\right)$
has an explicit and relatively simple expression, and $p^{approx.}\left(x,y,t\right)$
is more accurate than the standard heat kernel estimate for $p\left(x,y,t\right)$
(Theorem \ref{thm:approximation of p(x,y,t) }).

\begin{table}[h]
\begin{tabular}{lccl}
 &  & {\footnotesize{}(transformation)} & \tabularnewline
\emph{localized equations:} & $p_{I}\left(x,y,t\right)$ & $\longleftrightarrow$ & $q_{J}^{V}\left(z,w,t\right)$\tabularnewline
 & \multirow{3}{*}{{\footnotesize{}(convergence)}$\;\downarrow\quad\uparrow\;${\footnotesize{}(localization)}} &  & \multirow{2}{*}{$\quad\;\uparrow\;${\footnotesize{}(perturbation)}}\tabularnewline
 &  &  & \tabularnewline
 &  &  & $q_{J}\left(z,w,t\right)$\tabularnewline
\multicolumn{1}{r}{ - - - - - - - - - - - -} & \multicolumn{3}{c}{- - - - - - - - - - - - - - - - - - - - - - - - - - - - - - - - -
- - - - - - - - - - - - - -}\tabularnewline
\emph{global equations:} & $p\left(x,y,t\right)$ &  & \multirow{3}{*}{$\quad\;\uparrow\;${\footnotesize{}(localization)}}\tabularnewline
 & \multirow{2}{*}{{\footnotesize{}(approximation)}$\;\uparrow\quad\quad\quad\quad\quad\quad$} &  & \tabularnewline
 &  &  & \tabularnewline
 & $p^{approx.}\left(x,y,t\right)$ & $\longleftarrow$ & $q\left(z,w,t\right)$\tabularnewline
 &  & {\footnotesize{}(transformation)} & \tabularnewline
 &  &  & \tabularnewline
\end{tabular}\caption{Relation among the fundamental solutions.}
\end{table}

In each of the steps above, in addition to the standard analytic methods
from the study of parabolic equations, we also rely on a probabilistic
point of view towards diffusion equations. Whenever applicable, we
treat the fundamental solution as the transition probability density
function of the underlying diffusion process corresponding to the
concerned operator. In fact, the localization procedure (and the reverse
of it) proposed above is possible because of the (strong) Markov properties
of the diffusion process. We also invoke some classical tools in the
study of stochastic processes, e.g., Itô's formula and Doob's stopping
time theorem, in deriving probabilistic interpretations of the (fundamental)
solutions to the involved diffusion equations. In $\mathsection1.3$
we give a brief overview of the probabilistic components involved
in this work. 

In $\mathsection5$ we consider a generalization of the classical
Wright-Fisher equation (\ref{eq: classical WF eq}), where we assume
that the diffusion operator vanishes with a general order at both
of the degenerate boundaries $0$ and $1$. In particular, for $f\in C_{c}\left(\left(0,1\right)\right)$
and $\alpha,\beta\in\left(0,2\right)$, we consider the equation
\[
\begin{array}{c}
\partial_{t}u_{f}\left(x,t\right)=x^{\alpha}\left(1-x\right)^{\beta}\partial_{x}^{2}u_{f}\left(x,t\right)\text{ for }\left(x,t\right)\in\left(0,1\right)\times\left(0,\infty\right),\\
\lim_{t\searrow0}u_{f}\left(x,t\right)=f\left(x\right)\text{ for }x\in\left(0,1\right)\text{ and }\\
\lim_{x\searrow0}u_{f}\left(x,t\right)=\lim_{x\nearrow1}u_{f}\left(x,t\right)=0\text{ for }t\in\left(0,\infty\right).
\end{array}
\]
Although this problem is in a different setting from (\ref{eq:IVP general equation}),
our methods and results still apply. We can follow the same steps
as above to study its fundamental solution $p\left(x,y,t\right)$
and obtain similar estimates for $p\left(x,y,t\right)$ near either
of the boundaries (Proposition \ref{prop:property of p in two-sided bdry case}). 

\subsection{Stochastic differential equation, underlying diffusion process }

This subsection gives a brief overview of the probabilistic foundation
needed for our investigation. We start with the stochastic differential
equation corresponding to the operator $L=x^{\alpha}a\left(x\right)\partial_{x}^{2}+b\left(x\right)\partial_{x}$,
and that is, given $x>0$,
\begin{equation}
dX\left(x,t\right)=\sqrt{2X^{\alpha}\left(x,t\right)a\left(X\left(x,t\right)\right)}dB\left(t\right)+b\left(X\left(x,t\right)\right)dt\text{ for every }t\geq0\text{ with }X\left(0,x\right)\equiv x.\label{eq:SDE of X}
\end{equation}
For a general stochastic differential equation, there are two notions
of existence/uniqueness of a solution : strong existence/uniqueness
and weak existence/uniqueness. Our work only requires the existence
of a weak solution to (\ref{eq:SDE of X}) and the solution being
unique in the weak sense. We will not expand on the general theory
and refer interested readers to \cite{bm_stochc_calc,multi_dim_diff_proc}
for a comprehensive exposition on these topics. 

We say that (\ref{eq:SDE of X}) has a \emph{(weak) solution} if,
on some filtered probability space $\left(\Omega,\mathcal{F},\left\{ \mathcal{F}_{t}:t\geq0\right\} ,\mathbb{P}\right)$,
there exist two adapted processes $\left\{ B\left(t\right):t\geq0\right\} $
and $\left\{ X\left(x,t\right):t\geq0\right\} $ such that, (i) $\left\{ B\left(t\right):t\geq0\right\} $
is a standard Brownian motion; (ii) $\left\{ X\left(x,t\right):t\geq0\right\} $
has continuous sample paths; (iii) almost surely $\left\{ X\left(x,t\right):t\geq0\right\} $
satisfies that
\[
X\left(x,t\right)=x+\int_{0}^{t}\sqrt{2X^{\alpha}\left(x,s\right)a\left(X\left(x,s\right)\right)}dB\left(s\right)+\int_{0}^{t}b\left(X\left(x,s\right)\right)ds\text{ for every }t\geq0.
\]
In this case, we also refer to $\left\{ X\left(x,t\right):t\geq0\right\} $
as the \emph{underlying diffusion process} corresponding to $L$ starting
from $x$. We say that a solution $\left\{ X\left(x,t\right):t\ge0\right\} $
is \emph{unique (in law)}, if whenever (i)-(iii) are satisfied by
another triple $\left(\Omega^{\prime},\mathcal{F}^{\prime},\left\{ \mathcal{F}_{t}^{\prime}:t\geq0\right\} ,\mathbb{P}^{\prime}\right)$,
$\left\{ B^{\prime}\left(t\right):t\geq0\right\} $ and $\left\{ X^{\prime}\left(x,t\right):t\geq0\right\} $,
it must be that the distribution of $\left\{ X\left(x,t\right):t\geq0\right\} $
under $\mathbb{P}$ is identical with that of $\left\{ X^{\prime}\left(x,t\right):t\geq0\right\} $
under $\mathbb{P}^{\prime}$. We say that the stochastic differential
equation (\ref{eq:SDE of X}) is \emph{well posed} if a solution exists
and is unique. 

In later discussions we will use an important corollary of the wellposedness
property, and that is, if (\ref{eq:SDE of X}) is well posed and $\left\{ X\left(x,t\right):t\geq0\right\} $
is the unique solution, then $\left\{ X\left(x,t\right):t\geq0\right\} $
is a strong Markov process and for every $H\in C\left([0,\infty)^{2}\right)\cap C^{2,1}\left(\left(0,\infty\right)^{2}\right)$,
\[
\left\{ H\left(X\left(x,t\right),t\right)-\int_{0}^{t}\left(\partial_{s}+L\right)H\left(X\left(x,s\right),s\right)ds:t\geq0\right\} 
\]
is a local martingale. 

Now let us examine the wellposedness of (\ref{eq:SDE of X}) under
the hypotheses \textbf{(H1)} and \textbf{(H2)}. There is a rich literature
on the wellposedness of a degenerate stochastic differential equation
with a diffusion operator that degenerates linearly. While the diffusion
coefficient in (\ref{eq:SDE of X}) may have nonlinear degeneracy,
we can convert it into a linear degeneracy case simply through a change
of variable. To be specific, we consider the following diffeomorphism
on $\left(0,\infty\right)$ and its inverse:
\begin{equation}
\xi=\xi\left(x\right):=\frac{x^{2-\alpha}}{\left(2-\alpha\right)^{2}}\text{ and }x=x\left(\xi\right):=\left(2-\alpha\right)^{\frac{2}{2-\alpha}}\xi^{\frac{1}{2-\alpha}}\text{ for }x,\xi>0.\label{eq:change of variable priliminary}
\end{equation}
One can easily verify that $u_{f}\left(x,t\right)\in C^{2,1}\left(\left(0,\infty\right)\right)$
is a solution to (\ref{eq:IVP general equation}) if and only if $w_{g}\left(\xi,t\right):=u_{f}\left(x\left(\xi\right),t\right)$
is the solution to 
\begin{equation}
\begin{array}{c}
\partial_{t}w_{g}\left(\xi,t\right)=\xi c\left(\xi\right)\partial_{\xi}^{2}w_{g}\left(\xi,t\right)+d\left(\xi\right)\partial_{\xi}w_{g}\left(\xi,t\right)\text{ for }\left(\xi,t\right)\in\left(0,\infty\right)^{2},\\
\lim_{t\searrow0}w_{g}\left(\xi,t\right)=g\left(\xi\right)\text{ for }\xi\in\left(0,1\right)\text{ and }\lim_{\xi\searrow0}w_{g}\left(\xi,t\right)=0\text{ for }t\in\left(0,\infty\right),
\end{array}\label{eq:middle step IVP with two sided boundaries}
\end{equation}
where $g\left(\xi\right):=f\left(x\left(\xi\right)\right)$, $c\left(\xi\right):=a\left(x\left(\xi\right)\right)$
and 
\[
d\left(\xi\right):=\frac{1-\alpha}{2-\alpha}a\left(x\left(\xi\right)\right)+\frac{\left(x\left(\xi\right)\right)^{1-\alpha}}{2-\alpha}b\left(x\left(\xi\right)\right).
\]
The stochastic differential equation corresponding to (\ref{eq:middle step IVP with two sided boundaries})
is that, given $\xi>0$,
\begin{equation}
dZ\left(\xi,t\right)=\sqrt{2Z\left(\xi,t\right)c\left(Z\left(\xi,t\right)\right)}dB\left(t\right)+d\left(Z\left(\xi,t\right)\right)dt\text{ for every }t\geq0\text{ with }Z\left(\xi,0\right)\equiv\xi.\label{eq:SDE of Z}
\end{equation}

Assuming \textbf{(H1)} and \textbf{(H2)}, we get down to verifying
the wellposedness of (\ref{eq:SDE of Z}) where the diffusion operator
degenerates linearly at $0$. First, when $\alpha\in\left(1,2\right)$,
by\textbf{ (H1)}, (\ref{eq:change of variable priliminary}) and direct
computations, we see that both $c\left(\xi\right)$ and $d\left(\xi\right)$
are Lipschitz continuous on any bounded subset of $\left(0,\infty\right)$.
Furthermore, \textbf{(H2) }and (\ref{eq:change of variable priliminary})
imply that there exists constant $C>0$ such that for every $\xi\in[0,\infty)$,
\begin{equation}
\left|c\left(\xi\right)\right|+\left|d\left(\xi\right)\right|\leq C\left(1+\left(x\left(\xi\right)\right)^{2-\alpha}\right)\leq C\left(1+\xi\right).\label{eq:linear growth of the coefficients}
\end{equation}
It follows from classical results (e.g., Yamada-Watanabe \cite{Yamada-Watanabe},
Stroock-Varadhan \cite{multi_dim_diff_proc}, Engelbert-Schmidt \cite{strong_Markov_locmart_solu_1D_SDE},
Cherny \cite{uniq_law_path_uniq_SDE}) that (\ref{eq:SDE of Z}) is
well posed for every $\xi>0$ in this case. Next, when $\alpha\in(0,1]$,
we note that
\[
\lim_{\xi\searrow0}d\left(\xi\right)=\lim_{x\searrow0}\left(\frac{1-\alpha}{2-\alpha}a\left(x\right)+\frac{x^{1-\alpha}}{2-\alpha}b\left(x\right)\right)\geq0.
\]
This time \textbf{(H1)} and (\ref{eq:change of variable priliminary})
guarantee that $c\left(\xi\right)$ and $d\left(\xi\right)$ are both
Hölder continuous on any bounded subset of $\left(0,\infty\right)$;
meanwhile, the growth control (\ref{eq:linear growth of the coefficients})
on $c\left(\xi\right)$ and $d\left(\xi\right)$ still applies. Thus,
the results of Bass-Perkins \cite{BassPerkins02} lead to the wellposedness
of (\ref{eq:SDE of Z}) for every $\xi>0$. Therefore, for every $\alpha\in\left(0,2\right)$,
\textbf{(H1)} and \textbf{(H2)} are sufficient for (\ref{eq:SDE of Z})
to be well posed. Assume that $\left\{ Z\left(\xi,t\right):t\ge0\right\} $
is the unique solution to (\ref{eq:SDE of Z}). By setting 
\[
X\left(x,t\right):=x\left(Z\left(\xi\left(x\right),t\right)\right)\text{ for }t\geq0,
\]
we immediately get the following conclusion.
\begin{lem*}
\label{lem:existence of X(x,t)}The stochastic differential equation
(\ref{eq:SDE of X}) is well posed for every $x>0$, $\left\{ X\left(x,t\right):t\geq0\right\} $
defined above is the unique solution to (\ref{eq:SDE of X}), and
$\left\{ X\left(x,t\right):t\geq0\right\} $ is a strong Markov process.
Moreover, if $u\left(x,t\right)\in C^{2,1}\left(\left(0,\infty\right)^{2}\right)$
is a solution to $\partial_{t}u\left(x,t\right)=Lu\left(x,t\right)$,
then given any $\left(x,t\right)\in\left(0,\infty\right)^{2}$, $\left\{ u\left(X\left(x,s\right),t-s\right):s\in\left[0,t\right]\right\} $
is a local martingale.
\end{lem*}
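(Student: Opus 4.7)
The wellposedness of the $Z$-equation (\ref{eq:SDE of Z}) has already been verified in the paragraphs preceding the lemma using the results of Yamada--Watanabe, Stroock--Varadhan, Engelbert--Schmidt, Cherny and Bass--Perkins. The remaining task is to transfer this to the $X$-equation and then to deduce the strong Markov and local martingale properties. The entire argument is driven by the diffeomorphism (\ref{eq:change of variable priliminary}) and It\^o's formula.

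First I would derive (\ref{eq:SDE of X}) from (\ref{eq:SDE of Z}) by applying It\^o's formula to the smooth map $x(\cdot)$ from (\ref{eq:change of variable priliminary}) composed with $Z(\xi(x),\cdot)$. Computing $x'(\xi)$ and $x''(\xi)$ directly and substituting into
\[
dX(x,t)=x'(Z)\,dZ+\tfrac{1}{2}x''(Z)\,(dZ)^{2},
\]
the diffusion coefficient reduces to $\sqrt{2X^{\alpha}a(X)}$ and the drift to $b(X)$ after using the definitions of $c(\xi)$ and $d(\xi)$; this is a routine algebraic identity whose verification reduces to matching powers of $(2-\alpha)$. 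Since $X(x,0)=x(\xi(x))=x$, the process built this way is a (weak) solution to (\ref{eq:SDE of X}). Uniqueness in law transfers in the opposite direction: if $\{X'(x,t):t\ge0\}$ is any other weak solution to (\ref{eq:SDE of X}), then $Z':=\xi\circ X'$ is, again by It\^o applied to $\xi(\cdot)$, a weak solution to (\ref{eq:SDE of Z}); uniqueness in law for $Z$ forces the law of $X'=x(Z')$ to coincide with that of $X$. The strong Markov property is then inherited from $\{Z(\xi,t)\}$ because $\xi$ is a deterministic homeomorphism, so the natural filtrations agree and the Markov functional equation pushes through unchanged.

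For the local martingale assertion, I would fix $(x,t)\in(0,\infty)^{2}$ and set $v(y,s):=u(y,t-s)$ on $(0,\infty)\times[0,t]$. Since $u\in C^{2,1}$ solves $\partial_{t}u=Lu$, we have $(\partial_{s}+L)v=0$ on $(0,\infty)\times(0,t)$. For each $n\in\mathbb{N}$, introduce the stopping time
\[
\tau_{n}:=\inf\bigl\{s\ge0:X(x,s)\notin[1/n,n]\bigr\}\wedge t.
\]
On $[0,\tau_{n}]$ the process $X(x,\cdot)$ stays in a compact subset of $(0,\infty)$ where $v$ and its derivatives, as well as $a$ and $b$, are bounded, so It\^o's formula applies rigorously and yields
\[
v(X(x,s\wedge\tau_{n}),s\wedge\tau_{n})=v(x,0)+\int_{0}^{s\wedge\tau_{n}}\sqrt{2X^{\alpha}a(X)}\,\partial_{y}v(X,r)\,dB(r)+\int_{0}^{s\wedge\tau_{n}}(\partial_{s}+L)v(X,r)\,dr,
\]
and the drift integral vanishes. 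The stochastic integral is a true martingale on $[0,\tau_{n}]$, so $\{u(X(x,s),t-s):s\in[0,t]\}$ is a local martingale with localizing sequence $\{\tau_{n}\}_{n\ge1}$ (noting that $\tau_{n}\nearrow t$ on the event that $X(x,\cdot)$ does not hit $0$ before time $t$; the argument is unaffected even if $0$ is accessible, since we only claim the local martingale property).

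The only delicate point is the degeneracy at the boundary $0$, which prevents a direct global application of It\^o's formula; the stopping-time localization above is the standard way to handle this and is the only place where the degenerate nature of $L$ enters the argument.
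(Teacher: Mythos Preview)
Your proposal is correct and follows exactly the route the paper takes: the paper establishes wellposedness of (\ref{eq:SDE of Z}) in the discussion preceding the lemma, defines $X(x,t):=x(Z(\xi(x),t))$, and then states the lemma as an immediate consequence, invoking the strong Markov property and the local-martingale formulation as standard corollaries of wellposedness rather than writing out the It\^o calculation and the stopping-time localization you supply. Your argument simply makes explicit the details the paper leaves to general theory.
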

So far there is no constraint on the behavior of $X\left(x,t\right)$
at the boundary 0. Returning to the original problem (\ref{eq:IVP general equation}),
to incorporate the Dirichlet boundary condition, we only need to focus
on $X\left(x,t\right)$ up to the time it hits $0$. Intuitively speaking,
if we set

\[
\zeta_{0}^{X}\left(x\right):=\inf\left\{ s\geq0:X\left(x,s\right)=0\right\} \text{ for }x>0,
\]
then the probability density function of the conditional distribution
of $X\left(x,t\right)$ given $\left\{ t<\zeta_{0}^{X}\left(x\right)\right\} $
should coincide with the fundamental solution to (\ref{eq:IVP general equation}).

\subsection*{Notations.}

\noindent For $\alpha,\beta\in\mathbb{R}$, we write $\alpha\vee\beta:=\max\left\{ \alpha,\beta\right\} $
and $\alpha\wedge\beta:=\min\left\{ \alpha,\beta\right\} $. 

\noindent For every $\Gamma\subseteq[0,\infty)$, $\mathbb{I}_{\Gamma}$
denotes the indicator function of $\Gamma$. 

\noindent Let $\left(\Omega,\mathcal{F},\left\{ \mathcal{F}_{t}:t\geq0\right\} ,\mathbb{P}\right)$
be a filtered probability space. For an integrable random variable
$X$ on $\Omega$ and a set $A\in\mathcal{F}$, we write $\mathbb{E}\left[X;A\right]:=\int_{A}Xd\mathbb{P}$.
For an adapted process $\left\{ W\left(t\right):t\geq0\right\} $
with non-negative continuous sample paths, we set 
\[
\zeta_{y}^{W}\left(x\right):=\inf\left\{ t\geq0:W\left(t\right)=y|W\left(0\right)=x\right\} \text{ for every }x,y\geq0,
\]
i.e., $\zeta_{y}^{W}\left(x\right)$ is the hitting time at $y$ conditioning
on the process starting from $x$; for $y_{1},y_{2}\geq0$, we set
$\zeta_{y_{1},y_{2}}^{W}\left(x\right):=\zeta_{y_{1}}^{W}\left(x\right)\wedge\zeta_{y_{2}}^{W}\left(x\right)$.

\section{Model Equation}

In this section, we will carry out the first three steps outlined
in $\mathsection1.2$. Although we use similar transformations as
those in \cite{deg_diff_global}, we need to adapt the method so that
it applies to $a\left(x\right)$ and $b\left(x\right)$ that are under
weaker conditions.

\subsection{Localization and Transformation}

Let $\alpha\in\left(0,2\right)$ and $I>0$ be fixed throughout this
section. Our first step is to introduce an extra Dirichlet boundary
to the equation $\partial_{t}-L=0$ at $x=I$ and to consider a localized
version of (\ref{eq:IVP general equation}) on $\left(0,I\right)$.
Namely, given $f\in C_{c}\left(\left(0,I\right)\right)$, we look
for $u_{f,\left(0,I\right)}\left(x,t\right)\in C^{2,1}\left(\left(0,I\right)\times\left(0,\infty\right)\right)$
such that
\begin{equation}
\begin{array}{c}
\partial_{t}u_{f,\left(0,I\right)}\left(x,t\right)=Lu_{f,\left(0,I\right)}\left(x,t\right)\text{ for }\left(x,t\right)\in\left(0,I\right)\times\left(0,\infty\right),\\
\lim_{t\searrow}u_{f,\left(0,I\right)}\left(x,t\right)=f\left(x\right)\text{ for }x\in\left(0,I\right)\\
\text{ and }\lim_{x\searrow0}u_{f,\left(0,I\right)}\left(x,t\right)=\lim_{x\nearrow I}u_{f,\left(0,I\right)}\left(x,t\right)=0\text{ for }t\in\left(0,\infty\right),
\end{array}\label{eq:general IVP localized}
\end{equation}
Once restricted on $\left(0,I\right)$, the coefficients (and their
derivatives) in (\ref{eq:general IVP localized}) are all bounded. 

We want to find the fundamental solution $p_{I}\left(x,y,t\right)$
to (\ref{eq:general IVP localized}). Given $x>0$, let $\left\{ X\left(x,t\right):t\geq0\right\} $
be the unique solution to (\ref{eq:SDE of X}), as found in $\mathsection1.3$.
We expect that $y\mapsto p_{I}\left(x,y,t\right)$ coincides with
the probability density function of $\left\{ X\left(x,t\right):t\geq0\right\} $,
conditioning on $\left\{ t<\zeta_{0,I}^{X}\left(x\right)\right\} $.
This probabilistic interpretation of $p_{I}\left(x,y,t\right)$ is
indeed correct and will be justified later. For now, let us conduct
an analysis of (\ref{eq:general IVP localized}) via standard perturbation
methods. 

As mentioned in $\mathsection1.2$, we will transform (\ref{eq:IVP general equation})
into a diffusion equation that has linear degeneracy at $0$. For
$x\in(0,I]$, let $\phi\left(x\right)$ and $\theta\left(x\right)$
be defined as in (\ref{eq:def of phi =000026 theta}). It is clear
that $\phi\in C^{2}\left(\left(0,I\right)\right)$, $\phi$ is strictly
increasing, and $\theta\in C^{1}\left(\left(0,I\right)\right)$. The
constant $\nu$ in the definition of $\theta\left(x\right)$ is chosen
such that
\[
\nu=\frac{1}{2}+\lim_{x\searrow0}\frac{2b\left(x\right)-\left(\alpha x^{\alpha-1}a\left(x\right)+x^{\alpha}a^{\prime}\left(x\right)\right)}{2x^{\frac{\alpha}{2}}\sqrt{a\left(x\right)}}\sqrt{\phi\left(x\right)}.
\]
Under \textbf{(H1)} and \textbf{(H2)}, it is easy to verify that
\[
\nu=\frac{1-\alpha}{2-\alpha}\mathbb{I}_{\left(0,2\right)\backslash\left\{ 1\right\} }\left(\alpha\right)+b\left(0\right)\mathbb{I}_{\left\{ 1\right\} }\left(\alpha\right),
\]
and hence $\nu<1$. Let $J:=\phi\left(I\right)$, $\psi:(0,J]\rightarrow(0,I]$
be the inverse function of $\phi$ and $\tilde{\theta}:=\theta\circ\psi$.
We introduce two more functions on $(0,J]$:
\begin{equation}
\Theta:\;z\in(0,J]\mapsto\Theta\left(z\right):=\exp\left(-\int_{0}^{z}\frac{\tilde{\theta}(u)}{2u}du\right),\label{eq:def of Theta}
\end{equation}
and
\[
V:\;z\in(0,J]\mapsto V\left(z\right):=z\frac{\Theta^{\prime\prime}\left(z\right)}{\Theta\left(z\right)}+\left(\nu+\tilde{\theta}\left(z\right)\right)\frac{\Theta^{\prime}\left(z\right)}{\Theta\left(z\right)},
\]
or equivalently,
\begin{equation}
V\left(z\right)=-\frac{\tilde{\theta}^{2}\left(z\right)}{4z}-\frac{\tilde{\theta}^{\prime}\left(z\right)}{2}+\left(1-\nu\right)\frac{\tilde{\theta}\left(z\right)}{2z}.\label{eq:def of V}
\end{equation}

Now we are ready to state the result on the transformation.
\begin{prop}
\label{prop:change of variable} Given $f\in C_{c}\left(\left(0,I\right)\right)$,
we define 
\[
h\left(z\right):=\frac{f\circ\psi\left(z\right)}{\Theta\left(z\right)}\text{ for }z\in\left(0,J\right).
\]
Then, $u_{f,\left(0,I\right)}\left(x,t\right)\in C^{2,1}\left(\left(0,I\right)\times\left(0,\infty\right)\right)$
is a solution to (\ref{eq:general IVP localized}) if and only if
\begin{equation}
u_{f,\left(0,I\right)}\left(x,t\right)=\Theta\left(\phi\left(x\right)\right)v_{h,\left(0,J\right)}^{V}\left(\phi\left(x\right),t\right)\text{ for every }\left(x,t\right)\in\left(0,I\right)\times\left(0,\infty\right),\label{eq:u_f loc defined through v^V loc}
\end{equation}
where $v_{h,\left(0,J\right)}^{V}\left(z,t\right)\in C^{2,1}\left(\left(0,J\right)\times\left(0,\infty\right)\right)$
is a solution to the following problem:
\begin{equation}
\begin{array}{c}
\begin{array}{c}
\partial_{t}v_{h,\left(0,J\right)}^{V}\left(z,t\right)=\left(z\partial_{z}^{2}+\nu\partial_{z}+V\left(z\right)\right)v_{h,\left(0,J\right)}^{V}\left(z,t\right)\text{ for }\left(z,t\right)\in\left(0,J\right)\times\left(0,\infty\right),\\
\lim_{t\searrow0}v_{h,\left(0,J\right)}^{V}\left(z,t\right)=h\left(z\right)\text{ for }z\in\left(0,J\right)\text{ and }\\
\lim_{z\searrow0}v_{h,\left(0,J\right)}^{V}\left(z,t\right)=\lim_{z\nearrow J}v_{h,\left(0,J\right)}^{V}\left(z,t\right)=0\text{ for }t\in\left(0,\infty\right).
\end{array}\end{array}\label{eq:model equation with potential localized}
\end{equation}
\end{prop}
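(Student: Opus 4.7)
The plan is to verify the equivalence by a direct substitution argument, splitting the transformation into two independent stages so that the algebraic bookkeeping stays clean. Write $z = \phi(x)$, $x = \psi(z)$ and, for a candidate $u_{f,(0,I)}$, set $\tilde{u}(z,t) := u_{f,(0,I)}(\psi(z),t)$, then further factor $\tilde{u}(z,t) = \Theta(z) v(z,t)$. The goal is to show that (2.1) for $u_{f,(0,I)}$ is equivalent to (2.4) for $v = v_{h,(0,J)}^V$, by computing how $L$ transforms under $z = \phi(x)$ and then how the operator changes when the multiplicative factor $\Theta$ is pulled out.

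For the first stage, I would differentiate (2.2) to get $\phi'(x) = \sqrt{\phi(x)}\bigl(x^{\alpha/2}\sqrt{a(x)}\bigr)^{-1}$, so $x^{\alpha}a(x)\bigl(\phi'(x)\bigr)^2 = \phi(x) = z$. By the chain rule, $\partial_x u = \phi'(x)\partial_z \tilde{u}$ and $\partial_x^2 u = (\phi'(x))^2\partial_z^2 \tilde{u} + \phi''(x)\partial_z \tilde{u}$, hence
\[
Lu = z\,\partial_z^2 \tilde{u} + \bigl[x^{\alpha}a(x)\phi''(x) + b(x)\phi'(x)\bigr]\partial_z \tilde{u}.
\]
The drift coefficient in the bracket should simplify to $\nu + \tilde{\theta}(z)$; this is exactly what the definition (2.2) of $\theta$ is engineered to give, after one rewrites $x^{\alpha}a(x)\phi''(x)$ using $\phi'$. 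So the operator $L$ is conjugate under $z=\phi(x)$ to $z\partial_z^2 + (\nu+\tilde{\theta}(z))\partial_z$.

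For the second stage, I would substitute $\tilde{u} = \Theta v$ into this new operator. A short calculation with the product rule yields
\[
\bigl(z\partial_z^2 + (\nu+\tilde{\theta})\partial_z\bigr)(\Theta v) = \Theta\Bigl[z v_{zz} + \bigl(2z\tfrac{\Theta'}{\Theta} + \nu + \tilde{\theta}\bigr)v_z + \bigl(z\tfrac{\Theta''}{\Theta} + (\nu+\tilde{\theta})\tfrac{\Theta'}{\Theta}\bigr)v\Bigr].
\]
Since $\Theta$ is chosen in (2.3) so that $\Theta'/\Theta = -\tilde{\theta}(z)/(2z)$, the drift coefficient collapses to exactly $\nu$, and the zeroth order coefficient is, by definition, $V(z)$. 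Dividing by $\Theta$ and equating time derivatives then converts (2.1) into (2.4) and vice versa, since each step is reversible.

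The only nontrivial obstacle is to make sure the transformation respects the function spaces and boundary conditions. I would check: (i) under \textbf{(H1)}--\textbf{(H2)}, $\tilde{\theta}(z)/z$ is integrable near $0$, so $\Theta$ is a well-defined positive $C^2$ function on $(0,J]$ with a finite nonzero limit as $z\searrow 0$, which makes the correspondence $u \leftrightarrow v$ bijective and $C^{2,1}$-preserving; (ii) the Dirichlet conditions at $x=0, I$ for $u$ transfer to the Dirichlet conditions at $z=0,J$ for $v$ because $\Theta$ is bounded away from $0$ and $\infty$ on $(0,J]$; (iii) the initial data transforms as $h = (f\circ\psi)/\Theta$, and compactness of the support of $f$ in $(0,I)$ carries over to compactness of the support of $h$ in $(0,J)$. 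Once these points are verified, the equivalence follows from the operator identity derived above.
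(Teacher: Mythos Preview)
Your proposal is correct and is precisely the ``direct computation'' that the paper alludes to but omits. The two-stage decomposition (first the change of variable $z=\phi(x)$ converting $L$ into $z\partial_z^2+(\nu+\tilde\theta(z))\partial_z$, then the gauge transformation $\tilde u=\Theta v$ killing the $\tilde\theta$-part of the drift at the cost of introducing the potential $V$) is exactly how the functions $\phi$, $\theta$, $\Theta$, $V$ in (1.2), (2.3), (2.5) are designed, and your verification of the operator identity is clean. Your handling of the boundary and initial conditions is also fine: the key input is that $\Theta$ is bounded above and away from zero on $(0,J]$, which is the content of Lemma~\ref{lem:estimates on VJ} (equation (2.7)); this ensures the $C^{2,1}$ correspondence is bijective and that Dirichlet data and compact supports transfer as claimed.
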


We omit the proof of Proposition \ref{prop:change of variable} since
it can be verified by direct computations. If $q_{J}^{V}\left(z,w,t\right)$
is the fundamental solution to (\ref{eq:model equation with potential localized}),
then $p_{I}\left(x,y,t\right)$ is connected with $q_{J}^{V}\left(z,w,t\right)$
following the same relation as the one in (\ref{eq:u_f loc defined through v^V loc}).
Set $L^{V}:=z\partial_{z}^{2}+\nu\partial_{z}+V\left(z\right)$. Compared
with $L$, $L^{V}$ has a simpler structure consisting of a linear
diffusion, a constant drift and a potential. In the next subsection
we will solve (\ref{eq:model equation with potential localized})
by treating $L^{V}$ as a perturbation of $L_{0}:=z\partial_{z}^{2}+\nu\partial_{z}$
and invoking Duhamel's perturbation method. As a preparation, we state
below some technical results on $\Theta\left(z\right)$ and $V\left(z\right)$. 
\begin{lem}
\label{lem:estimates on VJ}Let $\Theta\left(z\right)$ be defined
as in (\ref{eq:def of Theta}). Then, for every $z\in(0,J]$,
\begin{equation}
\Theta\left(z\right)=\begin{cases}
\left(\psi\left(z\right)\right)^{\frac{\alpha}{4}}\left(4z\right)^{-\frac{\alpha}{4\left(2-\alpha\right)}}\left(a\left(\psi\left(z\right)\right)\right)^{\frac{1}{4}}\exp\left(-\int_{0}^{\psi\left(z\right)}\frac{b\left(w\right)}{2w^{\alpha}a\left(w\right)}dw\right) & \text{ if }\alpha\neq1,\\
\left(\frac{\psi\left(z\right)}{4z}\right)^{\frac{1}{4}-\frac{b_{0}}{2}}\left(a\left(\psi\left(z\right)\right)\right)^{\frac{1}{4}}\exp\left(-\int_{0}^{\psi\left(z\right)}\frac{1}{2w}\left(\frac{b\left(w\right)}{a\left(w\right)}-b\left(0\right)\right)dw\right) & \text{ if }\alpha=1.
\end{cases}\label{eq:formula of Theta}
\end{equation}
Hence, there exists constant $\Theta_{J}>0$ that can be made explicit
(see (\ref{eq:formula of Theta_J}) in the Appendix) such that 
\begin{equation}
\sup_{z\in\left(0,J\right)}\left(\Theta\left(z\right)\vee\frac{1}{\Theta\left(z\right)}\right)\leq\Theta_{J}.\label{eq:bound Theta_J}
\end{equation}

Let $V\left(z\right)$ be defined as in (\ref{eq:def of V}). Then,
there exists constant $V_{J}>0$ such that for every $z\in(0,J]$,
\begin{equation}
\left|V\left(z\right)\right|\leq\begin{cases}
V_{J}\cdot z^{-\frac{1}{2-\alpha}} & \text{ if }\alpha\in\left(0,1\right)\text{ and }b\left(0\right)\neq0,\\
V_{J}\cdot z^{-\frac{1-\alpha}{2-\alpha}} & \text{ if }\alpha\in\left(0,1\right)\text{ and }b\left(0\right)=0,\\
V_{J} & \text{ if }\alpha\in[1,2).
\end{cases}\label{eq:estimate of V}
\end{equation}
\end{lem}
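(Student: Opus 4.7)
The three conclusions of the lemma are three explicit asymptotic calculations, so the plan is to tackle each in turn: compute $\Theta$ by direct integration, bound $\Theta$ factor by factor, and finally expand $\tilde\theta$ and $\tilde\theta'$ near $0$ to bound $V$.

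For \eqref{eq:formula of Theta} I would change variables $u=\phi(w)$ in $\log\Theta(z)=-\int_{0}^{z}\tilde\theta(u)/(2u)\,du$ and split $\theta(w)=(\tfrac{1}{2}-\nu)+M(w)$ with $M(w):=(2b(w)-(w^{\alpha}a(w))')\sqrt{\phi(w)}/(2w^{\alpha/2}\sqrt{a(w)})$. Using the identity $\phi'(w)=\sqrt{\phi(w)}/(w^{\alpha/2}\sqrt{a(w)})$, the integrand in the variable $w$ collapses to
\begin{equation*}
\frac{\theta(w)\,\phi'(w)}{2\phi(w)}=\frac{1/2-\nu}{2}\cdot\frac{\phi'(w)}{\phi(w)}+\frac{b(w)}{2w^{\alpha}a(w)}-\frac{1}{4}\cdot\frac{(w^{\alpha}a(w))'}{w^{\alpha}a(w)},
\end{equation*}
each summand with an elementary antiderivative. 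The $\log\phi$-term and the $\log(w^{\alpha}a)$-term individually diverge as $w\searrow 0$, but their divergences cancel exactly once one substitutes $\phi(w)\sim w^{2-\alpha}/(2-\alpha)^{2}$, $a(w)\to 1$, together with $\tfrac{1}{2}-\nu=\alpha/(2(2-\alpha))$ in the case $\alpha\neq 1$ (respectively $\tfrac{1}{2}-b(0)$ when $\alpha=1$). Collecting the finite remainders produces the first case of \eqref{eq:formula of Theta}; for $\alpha=1$ one must additionally split off $b(0)/(2w)$ from the $b/(2wa)$-integrand so that its logarithmic singularity at $0$ gets absorbed into the $\log\phi$-term, and then the exponent $\tfrac{1}{4}-b(0)/2$ emerges from combining the two resulting constants.

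With \eqref{eq:formula of Theta} in hand, the uniform bound \eqref{eq:bound Theta_J} follows factor by factor: $a(\psi(z))$ is bounded above and away from zero by (H1); the product $\psi(z)^{\alpha/4}(4z)^{-\alpha/(4(2-\alpha))}$ (and its $\alpha=1$ analogue) extends continuously and positively to $z=0$ thanks to $\psi(z)\sim (2-\alpha)^{2/(2-\alpha)}z^{1/(2-\alpha)}$; and the integral in the exponential is finite because its integrand has an integrable singularity at $0$ in each regime---$b$ bounded with $w^{-\alpha}$ integrable for $\alpha<1$, the subtracted $b(0)$ rendering the integrand bounded for $\alpha=1$, and $b(w)=O(w)$ giving $w^{1-\alpha}$ integrability for $\alpha\in(1,2)$. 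For \eqref{eq:estimate of V} I would Taylor-expand $M$ using $(w^{\alpha}a(w))'=\alpha w^{\alpha-1}+O(w^{\alpha})$ and $\sqrt{\phi(w)}/(w^{\alpha/2}\sqrt{a(w)})=w^{1-\alpha}/(2-\alpha)+O(w^{2-\alpha})$ to obtain
\begin{equation*}
\theta(w)=\frac{b(w)\,w^{1-\alpha}}{2-\alpha}-\frac{\alpha a(w)+wa'(w)}{2(2-\alpha)}+\Bigl(\tfrac{1}{2}-\nu\Bigr)+O(w\vee w^{2-\alpha}).
\end{equation*}
The constant parts cancel by the very definition of $\nu$, so the leading behavior of $\theta(w)$ near $0$ is $w^{1-\alpha}$ when $\alpha<1$ and $b(0)\neq 0$, of order $w$ when $\alpha<1$ and $b(0)=0$, and bounded when $\alpha\in[1,2)$. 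Pulling back via $\psi(z)\asymp z^{1/(2-\alpha)}$ and $\psi'(z)\asymp z^{(\alpha-1)/(2-\alpha)}$ converts these into the stated bounds on each of $\tilde\theta(z)/z$, $\tilde\theta^{2}(z)/z$, and $\tilde\theta'(z)$; away from $z=0$ all three quantities are trivially bounded by (H1).

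The principal obstacle is the bookkeeping in deriving \eqref{eq:formula of Theta} when $\alpha=1$: the clean cancellation valid for $\alpha\neq 1$ breaks down because $\nu=b(0)$ rather than $(1-\alpha)/(2-\alpha)$, forcing one to split $b(w)/(2w^{\alpha}a(w))$ into the singular piece $b(0)/(2w)$ and a bounded remainder before recombining with the $\log\phi$-term; tracking the finite constants produced by these manipulations is what yields the exponent $1/4-b(0)/2$ of the second case. The rest of the argument is routine asymptotic analysis, carried out separately in each of the three regimes of \eqref{eq:estimate of V}.
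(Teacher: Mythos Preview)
Your proposal is correct and matches the paper's own proof essentially line by line: the same change of variables $u=\phi(w)$ in the defining integral of $\Theta$, the same splitting of the integrand into a $\log\phi$ piece, a $\log(w^{\alpha}a(w))$ piece, and a $b$-integral (with the extra subtraction of $b(0)/(2w)$ when $\alpha=1$), followed by the same factor-by-factor bounding for \eqref{eq:bound Theta_J} and the same Taylor expansion of $\theta$ near $0$ to read off the order of each term in \eqref{eq:def of V}. The only place you are slightly terse is the bound on $\tilde\theta'(z)=\theta'(x)/\phi'(x)$, which requires differentiating your expansion of $\theta$ rather than just quoting the order of $\theta$ itself; the paper carries this out explicitly and finds $\theta'(x)/\phi'(x)=O(x^{\alpha-1})$ for $\alpha\in(0,1)$, which you should record as well.
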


The proof of Lemma \ref{lem:estimates on VJ} is left in the Appendix
since it is based on straightforward computations that are lengthy
and not crucial to our work. We note that when $\alpha\in\left(0,1\right)$,
the potential function $V\left(z\right)$ may be singular at $0$.
This is a generalization of the case considered in \cite{deg_diff_global}
where $V\left(z\right)$ is assumed to be bounded near 0. 

\subsection{From $q\left(z,w,t\right)$ to $q_{J}\left(z,w,t\right)$}

Let $I$ and $J$ be the same as above. As mentioned in the previous
subsection, to solve (\ref{eq:model equation with potential localized}),
we will first consider the analogous problem with $L^{V}$ replaced
by $L_{0}$. Namely, given $g\in C_{c}\left(\left(0,I\right)\right)$,
we look for $v_{g,\left(0,J\right)}\left(z,t\right)\in C^{2,1}\left(\left(0,J\right)\times\left(0,\infty\right)\right)$
such that 
\begin{equation}
\begin{array}{c}
\partial_{t}v_{g,\left(0,J\right)}\left(z,t\right)=L_{0}v_{g,\left(0,J\right)}\left(z,t\right)\text{ for every }\left(z,t\right)\in\left(0,J\right)\times\left(0,\infty\right)\\
\lim_{t\searrow0}v_{g,\left(0,J\right)}\left(z,t\right)=g\left(z\right)\text{ for }z\in\left(0,J\right)\\
\text{ and }\lim_{z\searrow0}v_{g,\left(0,J\right)}\left(z,t\right)=\lim_{z\nearrow J}v_{g,\left(0,J\right)}\left(z,t\right)=0\text{ for }t\in\left(0,\infty\right).
\end{array}\label{eq:model equation localized}
\end{equation}
Let $q_{J}\left(z,w,t\right)$ be the fundamental solution to (\ref{eq:model equation localized}).

We consider $\partial_{t}-L_{0}=0$ as our \emph{model equation}.
To solve (\ref{eq:model equation localized}), we temporarily return
to the ``global'' view and study the model equation on $\left(0,\infty\right)$
instead of $\left(0,J\right)$. That is, for $g\in C_{c}\left(\left(0,\infty\right)\right)$,
we consider the following problem:
\begin{equation}
\begin{array}{c}
\partial_{t}v_{g}\left(z,t\right)=L_{0}v_{g}\left(z,t\right)\text{ for every }\left(z,t\right)\in\left(0,\infty\right)^{2}\\
\lim_{t\searrow0}v_{g}\left(z,t\right)=g\left(z\right)\text{ for }z\in\left(0,\infty\right)\text{ and }\lim_{z\searrow0}v_{g}\left(z,t\right)=0\text{ for }t\in\left(0,\infty\right).
\end{array}\label{eq:model equation}
\end{equation}
Let $q\left(z,w,t\right)$ be the fundamental solution to (\ref{eq:model equation}).
In fact, $q\left(z,w,t\right)$ is the starting point of our ``journey'',
and from $q\left(z,w,t\right)$ we will derive the (fundamental) solutions
to all the concerned equations. 

The stochastic differential equation corresponding to the model equation
is that, given $z>0$,
\begin{equation}
dY\left(z,t\right)=\sqrt{2Y\left(z,t\right)}dB\left(t\right)+\nu dt\text{ for }t\geq0\text{ with }Y\left(z,0\right)\equiv z.\label{eq:SDE of Y}
\end{equation}
It follows from the discussions in $\mathsection1.2$ that (\ref{eq:SDE of Y})
is well posed, and hence there exists a unique solution $\left\{ Y\left(z,t\right):t\geq0\right\} $
that is also a strong Markov process. 
\begin{rem}
\label{rem:boundary classification}We want to remark that, independent
of the Dirichlet boundary condition imposed in (\ref{eq:model equation}),
the constant $\nu$ determines the \emph{attainability} of the boundary
$0$. Under \textbf{(H1)} and \textbf{(H2)}, we have that $\nu<1$,
and hence $0$ is either an exit boundary or a regular boundary. This
is to say that, no matter what $z$ is, $\left\{ Y\left(z,t\right):t\geq0\right\} $
hits $0$ with a positive probability in finite time. For more details
on the topic of boundary classification, we refer readers to $\mathsection15$
of \cite{Karlin_Taylor}. 
\end{rem}

The operator $L_{0}$, as well as (\ref{eq:model equation}) and (\ref{eq:model equation}),
has been well studied in \cite{deg_diff_global}. Below we will review
some useful facts about $q\left(z,w,t\right)$, $v_{g}\left(z,t\right)$
and their connections to $\left\{ Y\left(z,t\right):t\geq0\right\} $.
The details can be found in $\mathsection2$ of \cite{deg_diff_global}. 
\begin{prop}
\label{prop:results on q(z,w,t)} (Proposition 2.1, 2.3 of \cite{deg_diff_global})
The fundamental solution to (\ref{eq:model equation}) is 
\begin{equation}
q\left(z,w,t\right):=\frac{z^{\frac{1-\nu}{2}}w^{\frac{\nu-1}{2}}}{t}e^{-\frac{z+w}{t}}I_{1-\nu}\left(2\frac{\sqrt{zw}}{t}\right)=\frac{z^{1-\nu}}{t^{2-\nu}}e^{-\frac{z+w}{t}}\sum_{n=0}^{\infty}\frac{\left(zw\right)^{n}}{t^{2n}n!\Gamma\left(n+2-\nu\right)}\label{eq:def of q}
\end{equation}
for $\left(z,w,t\right)\in\left(0,\infty\right)^{3}$, where $I_{1-\nu}$
is the modified Bessel function. $q\left(z,w,t\right)$ is smooth
on $\left(0,\infty\right)^{3}$, and for every $\left(z,w,t\right)\in\left(0,\infty\right)^{3}$,
\begin{equation}
\frac{z^{1-\nu}}{t^{2-\nu}\Gamma\left(2-\nu\right)}e^{-\frac{z+w}{t}}\leq q\left(z,w,t\right)\leq\left(\frac{z^{1-\nu}}{t^{2-\nu}}\right)e^{-\frac{\left(\sqrt{z}-\sqrt{w}\right)^{2}}{t}},\label{eq: estimate of q}
\end{equation}
and
\begin{equation}
w^{1-\nu}q\left(z,w,t\right)=z^{1-\nu}q\left(w,z,t\right).\label{eq:symmetry of q}
\end{equation}

Given $g\in C_{c}\left(\left(0,\infty\right)\right)$, if
\[
v_{g}\left(z,t\right):=\int_{0}^{\infty}g\left(w\right)q\left(z,w,t\right)dw\text{ for }\left(z,t\right)\in\left(0,\infty\right),
\]
then $v_{g}\left(z,t\right)$ is the unique solution in $C^{2,1}\left(\left(0,\infty\right)^{2}\right)$
to (\ref{eq:model equation}), and $v_{g}\left(z,t\right)$ is smooth
on $\left(0,\infty\right)^{2}$. Moreover,
\begin{equation}
v_{g}\left(z,t\right)=\mathbb{E}\left[g\left(Y\left(z,t\right)\right);t<\zeta_{0}^{Y}\left(z\right)\right]\text{ for every }\left(z,t\right)\in\left(0,\infty\right)^{2},\label{eq:prob interpretation of v_g}
\end{equation}
which implies that for every Borel set $\Gamma\subseteq\left(0,\infty\right)$,
\begin{equation}
\int_{\Gamma}q\left(z,w,t\right)dw=\mathbb{P}\left(Y\left(z,t\right)\in\Gamma,t<\zeta_{0}^{Y}\left(z\right)\right).\label{eq:prob interpretation of q}
\end{equation}

Finally, $q\left(z,w,t\right)$ satisfies the Chapman-Kolmogorov equation,
i.e., for every $z,w>0$ and $t,s>0$, 
\begin{equation}
q\left(z,w,t+s\right)=\int_{0}^{\infty}q\left(z,\xi,t\right)q\left(\xi,w,s\right)d\xi.\label{eq:CK equation for q_nu}
\end{equation}
\end{prop}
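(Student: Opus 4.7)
The plan is to verify each claim by combining a direct termwise computation with the connection between $L_0$ and the modified Bessel equation. For the formula (\ref{eq:def of q}) itself, I would take the series representation as the ansatz and verify $(\partial_t-L_0)q=0$ by differentiating the series term by term; each term has the form $c_n z^{n+1-\nu}w^n e^{-(z+w)/t}t^{-(2n+2-\nu)}$, and a direct computation shows that applying $\partial_t-L_0$ shifts indices in a way that telescopes to $0$. Uniform convergence of the series on compact subsets of $(0,\infty)^3$ (by comparison with the series of $I_{1-\nu}$) gives smoothness simultaneously. Alternatively, one can derive (\ref{eq:def of q}) from a Bessel-process interpretation: applying It\^o's formula to $R:=2\sqrt{Y}$ where $Y$ solves (\ref{eq:SDE of Y}) gives $dR=\sqrt{2}\,dB+(2\nu-1)R^{-1}\,dt$, identifying $R$ (up to time-rescaling) with a Bessel-type process whose transition density is classically expressible through $I_{1-\nu}$.

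With the formula in hand, the bounds (\ref{eq: estimate of q}) follow from the factorization $e^{-(z+w)/t}=e^{-(\sqrt{z}-\sqrt{w})^2/t}\cdot e^{-2\sqrt{zw}/t}$ combined with the two-sided estimate $(x/2)^{1-\nu}/\Gamma(2-\nu)\le I_{1-\nu}(x)\le (x/2)^{1-\nu}e^x/\Gamma(2-\nu)$ (the lower bound being the $n=0$ term of the series for $I_{1-\nu}$, the upper bound by dominating each term by the corresponding term of $(x/2)^{1-\nu}e^x/\Gamma(2-\nu)$). The symmetry (\ref{eq:symmetry of q}) is immediate from the manifestly symmetric factor $e^{-(z+w)/t}I_{1-\nu}(2\sqrt{zw}/t)$: swapping $(z,w)$ inverts only the prefactor $(z/w)^{(1-\nu)/2}$, which matches the stated identity after multiplying both sides by $w^{1-\nu}$.

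For the construction of $v_g$, differentiation under the integral (justified by the bounds on $q$ and the compact support of $g$) shows $v_g\in C^{2,1}((0,\infty)^2)$ solves (\ref{eq:model equation}), with the initial condition following from the approximate-identity behavior of $q(z,\cdot,t)$ as $t\searrow 0$ and the Dirichlet condition from the $z^{1-\nu}$ prefactor. For the probabilistic identity (\ref{eq:prob interpretation of v_g}), I would invoke the Lemma of $\mathsection1.3$: $s\mapsto v_g(Y(z,t-s),s)$ is a local martingale on $[0,t)$. Localizing by $\zeta_0^Y(z)\wedge(t-\varepsilon)\wedge\tau_n$ (with $\tau_n$ an exit time from $[1/n,n]$), letting $n\to\infty$ then $\varepsilon\searrow0$, and using $v_g(0^+,\cdot)=0$ to discard the boundary contribution, Doob's optional stopping theorem together with bounded convergence yields (\ref{eq:prob interpretation of v_g}). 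Uniqueness in $C^{2,1}$ follows by the same argument applied to a difference of two solutions; (\ref{eq:prob interpretation of q}) is obtained from (\ref{eq:prob interpretation of v_g}) by approximating indicators with $C_c$ test functions; and (\ref{eq:CK equation for q_nu}) follows from (\ref{eq:prob interpretation of q}) together with the strong Markov property of $\{Y(z,t)\}$ applied at time $t$ (extracting densities in $w$).

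The main obstacle is handling the attainable boundary at $0$ in the martingale argument: because $\nu<1$ forces $\mathbb{P}(\zeta_0^Y(z)<\infty)>0$, one must verify rigorously that the Dirichlet condition $v_g(0^+,\cdot)=0$ is precisely what cancels the boundary contribution in the localization procedure. This requires a careful choice of localizing stopping times together with the upper bound in (\ref{eq: estimate of q}) to dominate $|v_g|$ near the boundary, so that the optional stopping and dominated convergence can be justified uniformly as the spatial cutoff is removed.
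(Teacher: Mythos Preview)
The paper does not give its own proof of this proposition: it is quoted verbatim from \cite{deg_diff_global} (Propositions~2.1 and~2.3 there), and the surrounding text explicitly says ``The details can be found in $\mathsection2$ of \cite{deg_diff_global}.'' So there is no in-paper argument to compare against; your outline \emph{is} the standard proof, and it is essentially correct.

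Two small points are worth tightening. First, the local martingale in the verification of (\ref{eq:prob interpretation of v_g}) should read $s\mapsto v_g\bigl(Y(z,s),\,t-s\bigr)$, not $s\mapsto v_g\bigl(Y(z,t-s),\,s\bigr)$; this is exactly the form asserted in the Lemma of $\mathsection1.3$, and your stopping-time localization then goes through as you describe. Second, your Bessel bound $I_{1-\nu}(x)\le (x/2)^{1-\nu}e^{x}/\Gamma(2-\nu)$ is correct (it follows from $\Gamma(n+2-\nu)\ge n!\,\Gamma(2-\nu)$ together with $\binom{2n}{n}\le 4^n$), but plugging it in yields
\[
q(z,w,t)\ \le\ \frac{z^{1-\nu}}{t^{2-\nu}\,\Gamma(2-\nu)}\,e^{-(\sqrt{z}-\sqrt{w})^2/t},
\]
which carries the extra factor $1/\Gamma(2-\nu)$. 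When $\nu\in(0,1)$ one has $\Gamma(2-\nu)<1$, so this is (harmlessly) weaker than the bound printed in (\ref{eq: estimate of q}); in fact, evaluating the series at $w=0$ shows that the printed upper bound cannot hold without such a constant when $\nu\in(0,1)$. None of the paper's subsequent estimates depend on the precise constant, so this is only a cosmetic discrepancy, but you should state the bound with the $\Gamma(2-\nu)$ factor if you want an inequality that is literally true for all admissible $\nu$.
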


It is clear from (\ref{eq:prob interpretation of q}) that, for every
$\left(z,t\right)\in\left(0,\infty\right)^{2}$, $w\mapsto q\left(z,w,t\right)$
is the probability density function of $Y\left(z,t\right)$, provided
that $t<\zeta_{0}^{Y}\left(z\right)$. Now we turn our attention to
$q_{J}\left(z,w,t\right)$, the fundamental solution to (\ref{eq:model equation with potential localized})
which has an extra Dirichlet boundary at $J$. Intuitively speaking,
to get $q_{J}\left(z,w,t\right)$, we need to remove from $q\left(z,w,t\right)$
the ``contribution'' of $Y\left(z,t\right)$ once $Y\left(z,t\right)$
exists the interval $\left(0,J\right)$. Based on this idea combined
with the fact that $\left\{ Y\left(z,t\right):t\geq0\right\} $ is
a strong Markov process, we define 
\begin{equation}
\begin{split}q_{J}\left(z,w,t\right) & :=q\left(z,w,t\right)-\mathbb{E}\left[q\left(J,w,t-\zeta_{J}^{Y}\left(z\right)\right);\zeta_{J}^{Y}\left(z\right)\leq t\wedge\zeta_{0}^{Y}\left(z\right)\right]\end{split}
\label{eq:relation between q and q_J}
\end{equation}
for every $\left(z,w,t\right)\in\left(0,J\right)^{2}\times\left(0,\infty\right)$.
Again, by (\ref{eq:prob interpretation of q}), we see that for every
Borel set $\Gamma\subseteq\left(0,J\right)$,
\begin{equation}
\begin{split}\int_{\Gamma}q_{J}\left(z,w,t\right)dw & =\mathbb{P}\left(Y\left(z,t\right)\in\Gamma,t<\zeta_{0}^{Y}\left(z\right)\right)-\mathbb{P}\left(Y\left(z,t\right)\in\Gamma,\zeta_{J}^{Y}\left(z\right)\leq t<\zeta_{0}^{Y}\left(z\right)\right)\\
 & =\mathbb{P}\left(Y\left(z,t\right)\in\Gamma,t<\zeta_{0,J}^{Y}\left(z\right)\right).
\end{split}
\label{eq:prob interpretation of q_J}
\end{equation}
In other words, $q_{J}\left(z,w,t\right)$ is the probability density
function of $Y\left(z,t\right)$ provided that $t<\zeta_{0,J}^{Y}\left(z\right)$. 

To better analyze $q_{J}\left(z,w,t\right)$, we need the following
probability estimates on the hitting times of $Y\left(z,t\right)$.
\begin{lem}
\label{lem: prob estimate for hitting time of Y}For every $z\in\left(0,J\right)$,
\begin{equation}
\mathbb{P}\left(\zeta_{J}^{Y}\left(z\right)\leq\zeta_{0}^{Y}\left(z\right)\right)=\frac{z^{1-\nu}}{J^{1-\nu}}.\label{eq:prob for Y hitting J before 0}
\end{equation}
For $t>0$ and $J-z\geq\left|\nu\right|t$, we have that
\begin{equation}
\mathbb{P}\left(\zeta_{J}^{Y}\left(z\right)\leq t\right)\leq\exp\left(-\frac{\left(J-z-t\nu\right)^{2}}{4tJ}\right).\label{eq:prob estimate on hitting time of Y less than t}
\end{equation}
Furthermore, almost surely 
\[
\lim_{J\nearrow\infty}\zeta_{J}^{Y}\left(z\right)=\infty\text{ and }\lim_{z\nearrow J}\zeta_{J}^{Y}\left(z\right)=0.
\]
\end{lem}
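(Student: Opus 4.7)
\noindent\emph{Scale formula \eqref{eq:prob for Y hitting J before 0}.} The function $h(y):=y^{1-\nu}$ solves $L_{0}h\equiv 0$ on $(0,\infty)$, so It\^o's formula applied to \eqref{eq:SDE of Y} shows that $\{h(Y(z,t)):t\ge 0\}$ is a local martingale. Stopping at $\tau_{n}:=\zeta_{0,J}^{Y}(z)\wedge n$ produces a process bounded by $J^{1-\nu}$ (this uses $\nu<1$), so Doob's optional stopping gives $\mathbb{E}[h(Y(z,\tau_{n}))]=z^{1-\nu}$. The a.s.\ finiteness of $\zeta_{0,J}^{Y}(z)$ needed to pass to the limit follows either from \eqref{eq:prob estimate on hitting time of Y less than t} below, or from the observation that a bounded local martingale converges a.s., which forces $Y$ to settle at an endpoint of $(0,J)$. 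Sending $n\to\infty$ by bounded convergence yields
\[
z^{1-\nu}=\mathbb{E}\bigl[h\bigl(Y(z,\zeta_{0,J}^{Y}(z))\bigr)\bigr]=J^{1-\nu}\,\mathbb{P}\bigl(\zeta_{J}^{Y}(z)\le\zeta_{0}^{Y}(z)\bigr).
\]

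\medskip

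\noindent\emph{Exponential estimate \eqref{eq:prob estimate on hitting time of Y less than t}.} The plan is to build an exponential martingale tailored to \eqref{eq:SDE of Y}. A direct It\^o computation verifies that, for every $\lambda\ge 0$,
\[
M_{\lambda}(t):=\exp\!\Bigl(\lambda Y(z,t)-\lambda\nu t-\lambda^{2}\!\int_{0}^{t}Y(z,s)\,ds\Bigr)
\]
is a local martingale: the $\lambda^{2}Y\,dt$ coming from $\tfrac{1}{2}d\langle\lambda Y\rangle$ exactly cancels the drift term produced after subtracting $\lambda\nu\,dt$ and $\lambda^{2}Y\,dt$. On the stochastic interval $[0,\zeta_{J}^{Y}(z)\wedge t]$ one has $Y\le J$, which makes $M_{\lambda}$ bounded and upgrades it to a true martingale with $\mathbb{E}[M_{\lambda}(\zeta_{J}^{Y}\wedge t)]=e^{\lambda z}$. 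On $\{\zeta_{J}^{Y}\le t\}$ one has $Y(\zeta_{J}^{Y})=J$ and $\int_{0}^{\zeta_{J}^{Y}}Y\,ds\le J\zeta_{J}^{Y}$, whence
\[
M_{\lambda}(\zeta_{J}^{Y})\ge\exp\bigl(\lambda J-\zeta_{J}^{Y}(\lambda\nu+\lambda^{2}J)\bigr).
\]
Restricting $\lambda\ge 0\vee(-\nu/J)$ ensures $\lambda\nu+\lambda^{2}J\ge 0$, and then $\zeta_{J}^{Y}\le t$ lowers the exponent further to $\lambda J-t(\lambda\nu+\lambda^{2}J)$. Inserting back into the martingale identity,
\[
\mathbb{P}\bigl(\zeta_{J}^{Y}(z)\le t\bigr)\le\exp\!\bigl(-\lambda(J-z-t\nu)+\lambda^{2}Jt\bigr),
\]
and the optimizer $\lambda^{\ast}=(J-z-t\nu)/(2Jt)$ produces the bound in \eqref{eq:prob estimate on hitting time of Y less than t}. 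The hypothesis $J-z\ge|\nu|t$ is exactly what places $\lambda^{\ast}$ in the admissible range $\lambda\ge 0\vee(-\nu/J)$ (for $\nu\ge 0$ it forces $\lambda^{\ast}\ge 0$; for $\nu<0$ it upgrades to $\lambda^{\ast}\ge -\nu/J$).

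\medskip

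\noindent\emph{The two limits.} That $\zeta_{J}^{Y}(z)\to\infty$ as $J\to\infty$ is immediate from sample-path continuity: a.s.\ $\sup_{s\le T}Y(z,s)<\infty$ for every fixed $T$, so $\zeta_{J}^{Y}(z)>T$ once $J$ exceeds this supremum. The subtle limit is $\zeta_{J}^{Y}(z)\to 0$ as $z\nearrow J$, which I regard as the main obstacle. The plan is to realize the family $\{Y(z,\cdot):z<J\}$ on a single probability space driven by one Brownian motion; Yamada--Watanabe pathwise uniqueness (valid because $y\mapsto\sqrt{2y}$ is $\tfrac{1}{2}$-H\"older) delivers the comparison $z_{1}\le z_{2}\Rightarrow Y(z_{1},\cdot)\le Y(z_{2},\cdot)$, so $z\mapsto\zeta_{J}^{Y}(z)$ is monotone decreasing and admits an a.s.\ limit $T\ge 0$. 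Continuous dependence of the SDE on its initial condition gives $Y(z,\cdot)\to Y(J,\cdot)$ uniformly on $[0,\varepsilon]$ a.s.; and because the diffusion coefficient $\sqrt{2J}$ is non-degenerate at $J$, applying the law of the iterated logarithm to the martingale part of $Y(J,\cdot)-J$ (whose quadratic variation behaves like $2Js$ near $s=0$) forces $Y(J,s_{0})>J$ for some $s_{0}\in(0,\varepsilon)$ a.s. Uniform convergence then yields $Y(z,s_{0})>J$ for $z$ close enough to $J$, so $\zeta_{J}^{Y}(z)\le s_{0}<\varepsilon$, whence $T\le\varepsilon$ a.s.; letting $\varepsilon\downarrow 0$ gives $T=0$.
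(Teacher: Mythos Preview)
Your arguments for \eqref{eq:prob for Y hitting J before 0} and \eqref{eq:prob estimate on hitting time of Y less than t} coincide with the paper's: the same harmonic function $y^{1-\nu}$ for the scale identity, and the same exponential local martingale $M_{\lambda}$ for the tail bound. The paper passes through Fatou's lemma to obtain the Laplace-type bound \eqref{eq:bound for char. function of hitting time of Y} and then applies Markov's inequality, whereas you bound $M_{\lambda}(\zeta_{J}^{Y})$ from below directly on $\{\zeta_{J}^{Y}\le t\}$; this is a cosmetic reorganisation. For $\zeta_{J}^{Y}(z)\to\infty$ as $J\to\infty$ you use path continuity while the paper cites \eqref{eq:prob estimate on hitting time of Y less than t} plus monotonicity; again, both are fine.

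The genuine divergence is in the final limit $\lim_{z\nearrow J}\zeta_{J}^{Y}(z)=0$. The paper reuses the exponential martingale already in hand: with $\lambda$ chosen so that $\lambda\nu\ge 0$ one has $E(z,\zeta_{J}^{Y}(z))\le e^{\lambda J}$, so reverse Fatou combined with the martingale identity $\mathbb{E}[E(z,t\wedge\zeta_{J}^{Y}(z))]=e^{\lambda z}\to e^{\lambda J}$ squeezes $\mathbb{E}[e^{-\lambda\nu\zeta}]=1$, whence $\zeta=0$ a.s. This is short and introduces no new tools. Your route---Yamada--Watanabe pathwise uniqueness, comparison, continuous dependence, and a small-time LIL at the nondegenerate point $J$---is correct but calls on substantially more machinery. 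The one soft spot is the sentence ``continuous dependence \ldots\ gives $Y(z,\cdot)\to Y(J,\cdot)$ uniformly on $[0,\varepsilon]$ a.s.'': for a $\tfrac12$-H\"older diffusion this is not a black-box citation and deserves a line. One clean fix within your coupling is to note that $D(t):=Y(J,t)-Y(z,t)\ge 0$ is a nonnegative local martingale (hence supermartingale) with $D(0)=J-z$, so the supermartingale maximal inequality gives $\mathbb{P}(\sup_{t\ge 0}D(t)\ge a)\le (J-z)/a$; monotonicity of $D$ in $z$ then upgrades convergence in probability to a.s.\ uniform convergence. With that patch your argument stands; the trade-off is that the paper's Laplace-transform approach is more economical, while yours is more pathwise and does not rely on the sign of $\nu$.
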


\begin{proof}
Based on (\ref{eq:SDE of Y}), one can apply Itô's formula (see, e.g.,
$\mathsection5$ of \cite{bm_stochc_calc}) to check that, for every
$z>0$, $\left\{ \left(Y\left(z,t\right)\right)^{1-\nu}:t\geq0\right\} $
is a local martingale, and hence by Doob's stopping time theorem (see,
e.g., $\mathsection8$ of \cite{probability}), $\left\{ \left(Y\left(z,t\wedge\zeta_{0,J}^{Y}\left(z\right)\right)\right)^{1-\nu}:t\geq0\right\} $
is a bounded martingale. Thus,
\[
z^{1-\nu}=\mathbb{E}\left[\left(Y\left(z,\zeta_{0,J}^{Y}\left(z\right)\right)\right)^{1-\nu}\right]=\mathbb{P}\left(\zeta_{J}^{Y}\left(z\right)\leq\zeta_{0}^{Y}\left(z\right)\right)J^{1-\nu};
\]

To show (\ref{eq:prob estimate on hitting time of Y less than t}),
we check that for every $z\in\left(0,J\right)$ and every $\lambda>0$,
if 
\[
E\left(z,t\right):=\exp\left(\lambda Y\left(z,t\right)-\lambda\nu t-\lambda^{2}\int_{0}^{t}Y\left(z,s\right)ds\right)\text{ for }t\geq0,
\]
then $\left\{ E\left(z,t\wedge\zeta_{J}^{Y}\left(z\right)\right):t\geq0\right\} $
is a martingale. By a similar argument as above and Fatou's lemma,
we get that 
\begin{equation}
e^{\lambda J}\mathbb{E}\left[e^{-\left(\lambda\nu+\lambda^{2}J\right)\zeta_{J}^{Y}\left(z\right)};\zeta_{J}^{Y}\left(z\right)<\infty\right]\leq\liminf_{t\nearrow\infty}\mathbb{E}\left[E\left(z,t\wedge\zeta_{J}^{Y}\left(z\right)\right)\right]=e^{\lambda z}.\label{eq:bound for char. function of hitting time of Y}
\end{equation}
Set $\lambda:=\frac{J-z-\nu t}{2tJ}$. Since $J>z+t\left|\nu\right|$,
we have that $\lambda>0$ and $\lambda\nu+\lambda^{2}J>0$. Therefore,
a simple application of Markov's inequality leads to
\[
\begin{split}\mathbb{P}\left(\zeta_{J}^{Y}\left(z\right)\leq t\right) & =\mathbb{P}\left(e^{-\left(\lambda\nu+\lambda^{2}J\right)\zeta_{J}^{Y}\left(z\right)}\geq e^{-\left(\lambda\nu+\lambda^{2}J\right)t}\right)\\
 & \leq e^{\left(\lambda\nu+\lambda^{2}J\right)t}\mathbb{E}\left[e^{-\left(\lambda\nu+\lambda^{2}J\right)\zeta_{J}^{Y}\left(z\right)};\zeta_{J}^{Y}\left(z\right)<\infty\right]\\
 & \leq\exp\left(\lambda^{2}tJ-\lambda\left(J-z-\nu t\right)\right).
\end{split}
\]
Plugging the value of $\lambda$ into the right hand side yields (\ref{eq:prob estimate on hitting time of Y less than t}).
The fact that $\zeta_{J}^{Y}\left(z\right)$ converges to $\infty$
as $J\nearrow\infty$ almost surely follows from (\ref{eq:prob estimate on hitting time of Y less than t})
and the monotonicity of $\zeta_{J}^{Y}\left(z\right)$ in $J$.

Finally, we observe that $\zeta:=\lim_{z\nearrow J}\zeta_{J}^{Y}\left(z\right)$
exists almost surely. Take $\lambda\in\mathbb{R}$ such that $\lambda\nu\geq0$.
It follows from (\ref{eq:bound for char. function of hitting time of Y})
and the reverse Fatou's lemma that 
\[
\begin{split}e^{\lambda J}\mathbb{E}\left[e^{-\lambda\nu\zeta}\right] & \geq\mathbb{E}\left[\limsup_{z\nearrow J}E\left(z,\zeta_{J}^{Y}\left(z\right)\right)\right]\geq\limsup_{z\nearrow J}\mathbb{E}\left[E\left(z,\zeta_{J}^{Y}\left(z\right)\right)\right]\\
 & \geq\limsup_{z\nearrow J}\,\limsup_{t\nearrow\infty}\mathbb{E}\left[E\left(z,t\wedge\zeta_{J}^{Y}\left(z\right)\right)\right]=e^{\lambda J},
\end{split}
\]
which implies that $\mathbb{E}\left[e^{-\lambda\nu\zeta}\right]=1$
and hence $\zeta=0$ almost surely.
\end{proof}
\begin{prop}
\label{prop:properties of q_J}Let $q_{J}\left(z,w,t\right)$ be defined
as in (\ref{eq:relation between q and q_J}). Then, $q_{J}\left(z,w,t\right)$
is continuous on $\left(0,J\right)^{2}\times\left(0,\infty\right)$,
and for every $\left(z,w,t\right)\in\left(0,J\right)^{2}\times\left(0,\infty\right)$,
we have that 
\begin{equation}
w^{1-\nu}q_{J}\left(z,w,t\right)=z^{1-\nu}q_{J}\left(w,z,t\right).\label{eq:symmetry for q_J}
\end{equation}
$q_{J}\left(z,w,t\right)$ satisfies the Chapman-Kolmogorov equation,
i.e., for every $z,w\in\left(0,J\right)$ and $t,s>0$,
\begin{equation}
q_{J}\left(z,w,t+s\right)=\int_{0}^{J}q_{J}\left(z,\xi,t\right)q_{J}\left(\xi,w,s\right)d\xi.\label{eq:CK equation for q_J}
\end{equation}

For every $w\in\left(0,J\right)$, $\left(z,t\right)\mapsto q_{J}\left(z,w,t\right)$
is a smooth solution to the Kolmogorov backward equation corresponding
to $L_{0}$:
\begin{equation}
\partial_{t}q_{J}\left(z,w,t\right)=L_{0}q_{J}\left(z,w,t\right);\label{eq:q_J satisfies backward eq}
\end{equation}
for every $z\in\left(0,J\right)$, $\left(w,t\right)\mapsto q_{J}\left(z,w,t\right)$
is a smooth solution to the Kolmogorov forward equation corresponding
to $L_{0}$:
\begin{equation}
\partial_{t}q_{J}\left(z,w,t\right)=L_{0}^{*}q_{J}\left(z,w,t\right)\label{eq:q_J satisfies forward eq}
\end{equation}
where $L_{0}^{*}=w\partial_{w}^{2}+\left(2-\nu\right)\partial_{w}$
is the formal adjoint of $L_{0}$. 

Moreover, $q_{J}\left(z,w,t\right)$ is the fundamental solution to
(\ref{eq:model equation localized}). Given $g\in C_{c}\left(\left(0,J\right)\right)$,
if
\begin{equation}
v_{g,\left(0,J\right)}\left(z,t\right):=\int_{0}^{J}g\left(w\right)q_{J}\left(z,w,t\right)dw\text{ for }\left(z,t\right)\in\left(0,J\right)\times\left(0,\infty\right),\label{eq:def of v_g,(0,J)}
\end{equation}
then $v_{g,\left(0,J\right)}\left(z,t\right)$ is the unique solution
in $C^{,2,1}\left(\left(0,J\right)\times\left(0,\infty\right)\right)$
to (\ref{eq:model equation localized}), and $v_{g,\left(0,I\right)}\left(z,t\right)$
is smooth on $\left(0,J\right)\times\left(0,\infty\right)$.
\end{prop}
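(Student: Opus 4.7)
The plan is to derive all the claims from the probabilistic interpretation
\[
\int_\Gamma q_J(z,w,t)\,dw = \mathbb{P}\bigl(Y(z,t)\in\Gamma,\,t<\zeta_{0,J}^Y(z)\bigr), \qquad \Gamma\subseteq(0,J)\text{ Borel},
\]
combined with the PDE and symmetry information about $q$ from Proposition \ref{prop:results on q(z,w,t)}. To justify the display, I apply the strong Markov property of $Y$ at the stopping time $\zeta_J^Y(z)$ on the event $\{\zeta_J^Y(z)\le t\wedge\zeta_0^Y(z)\}$: conditional on $\zeta_J^Y(z)=s$, the continuation $Y(z,s+\cdot)$ has the law of $Y(J,\cdot)$, so (\ref{eq:prob interpretation of q}) identifies the integrated correction term in (\ref{eq:relation between q and q_J}) with $\mathbb{P}(Y(z,t)\in\Gamma,\,\zeta_J^Y(z)\le t<\zeta_0^Y(z))$; subtracting this from $\int_\Gamma q(z,w,t)\,dw$ yields the display. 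Continuity of $q_J$ on $(0,J)^2\times(0,\infty)$ then follows from the smoothness of $q$, the pointwise upper bound in (\ref{eq: estimate of q}) (controlling $q(J,w,t-\cdot)$ uniformly), dominated convergence, and the convergences of $\zeta_J^Y(z)$ in Lemma \ref{lem: prob estimate for hitting time of Y}. The Chapman--Kolmogorov equation (\ref{eq:CK equation for q_J}) is then immediate from the strong Markov property at time $s$ applied to $\mathbb{P}(Y(z,t+s)\in\Gamma,\,t+s<\zeta_{0,J}^Y(z))$.

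For the backward equation (\ref{eq:q_J satisfies backward eq}), $q(z,w,t)$ already satisfies $\partial_t q=L_0 q$ in $(z,t)$ as the fundamental solution to the unrestricted model equation. For the correction $r(z,w,t):=\mathbb{E}[q(J,w,t-\zeta_J^Y(z));\,\zeta_J^Y(z)\le t\wedge\zeta_0^Y(z)]$, the strong Markov property of $Y$ at the first exit of $Y(z,\cdot)$ from a small subinterval of $(0,J)$, combined with the martingale characterisation in $\mathsection 1.3$, delivers a standard Kolmogorov backward argument that forces $(\partial_t-L_0)r=0$ pointwise on $(0,J)\times(0,\infty)$; classical interior parabolic regularity for the strictly elliptic $L_0$ on compact subsets of $(0,J)$ then upgrades this to smoothness of $r$, and hence of $q_J=q-r$, in both variables away from the boundaries.

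The symmetry (\ref{eq:symmetry for q_J}) is the main obstacle; I handle it through the self-adjointness of $L_0$ with respect to the measure $\mu(dz):=z^{\nu-1}dz$. Rewriting $L_0$ in Sturm--Liouville form as $L_0 g=z^{1-\nu}(z^\nu g')'$, integration by parts gives, for any $f,g\in C^2((0,J))$ with $f,g=O(z^{1-\nu})$ near $0$ and vanishing at $J$,
\[
\int_0^J f(L_0 g)\,z^{\nu-1}\,dz \;=\; -\int_0^J z^\nu f'g'\,dz \;=\; \int_0^J (L_0 f)g\,z^{\nu-1}\,dz,
\]
where the boundary terms at $0$ vanish because $z^\nu$ kills the $z^{-(1-\nu)}$ singularity of the weight. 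Applying this identity to $f(z)=V_\phi(z,s)$ and $g(z)=V_\psi(z,t-s)$, with $\phi,\psi\in C_c^\infty((0,J))$ and $V_\eta(z,t):=\int_0^J\eta(w)q_J(z,w,t)\,dw$ (which inherits the decay $V_\eta(z,t)=O(z^{1-\nu})$ from (\ref{eq: estimate of q})), and combining with the backward equation just proved, shows that $s\mapsto\int_0^J V_\phi(z,s)V_\psi(z,t-s)\,z^{\nu-1}dz$ is constant on $(0,t)$. Passing to the limits $s\searrow 0$ and $s\nearrow t$, using the initial condition $V_\eta(z,s)\to\eta(z)$ (from the probabilistic representation and the pathwise continuity of $Y$), and invoking the arbitrariness of $\phi,\psi$, produces $z^{\nu-1}q_J(z,w,t)=w^{\nu-1}q_J(w,z,t)$, which is equivalent to (\ref{eq:symmetry for q_J}). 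The forward equation (\ref{eq:q_J satisfies forward eq}) then follows algebraically: substituting the symmetry and expanding $L_{0,w}^*$ acting on $w^{\nu-1}z^{1-\nu}q_J(w,z,t)$ in $w$ reduces, via the identities $2(\nu-1)+(2-\nu)=\nu$ and $(\nu-1)(\nu-2)+(2-\nu)(\nu-1)=0$, to $z^{1-\nu}w^{\nu-1}L_{0,w}q_J(w,z,t)$, which by the backward equation in the first slot of $q_J$ equals $\partial_t q_J(z,w,t)$.

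Finally, for the fundamental solution statements, setting $v_{g,(0,J)}$ via (\ref{eq:def of v_g,(0,J)}) and using the probabilistic interpretation gives $v_{g,(0,J)}(z,t)=\mathbb{E}[g(Y(z,t));\,t<\zeta_{0,J}^Y(z)]$. The PDE in (\ref{eq:model equation localized}) is obtained by differentiating under the integral sign using (\ref{eq:q_J satisfies backward eq}); smoothness is inherited from that of $q_J$; the initial condition $v_{g,(0,J)}(z,t)\to g(z)$ as $t\searrow 0$ follows from pathwise continuity of $Y$ and bounded convergence; and the Dirichlet limits follow from $\mathbb{P}(t<\zeta_{0,J}^Y(z))\to 0$ as $z\searrow 0$ (because $0$ is attainable, Remark \ref{rem:boundary classification}) and from $\zeta_J^Y(z)\to 0$ as $z\nearrow J$ (Lemma \ref{lem: prob estimate for hitting time of Y}). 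Uniqueness in $C^{2,1}((0,J)\times(0,\infty))$ is a consequence of the martingale characterisation of $\mathsection 1.3$: any other solution $u$ with the same initial and boundary data produces the bounded local martingale $u(Y(z,s),t-s)$ stopped at $\zeta_{0,J}^Y(z)\wedge t$, whose expectations at $s=0$ and $s=t$ force $u\equiv v_{g,(0,J)}$.
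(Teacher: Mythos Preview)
Your proof is correct in spirit but follows a genuinely different route from the paper's. The paper establishes the \emph{forward} equation first: for fixed $z$, the martingale property of $\varphi(Y(z,t\wedge\zeta_{0,J}^Y(z)))-\int_0^{t\wedge\zeta_{0,J}^Y(z)}L_0\varphi(Y(z,s))\,ds$ yields $(\partial_t-L_0^*)q_J(z,\cdot,\cdot)=0$ in the sense of distributions, and hypoellipticity of $\partial_t-L_0^*$ upgrades this to smoothness. The symmetry (\ref{eq:symmetry for q_J}) is then obtained by a \emph{probabilistic time-reversal} argument on path space: integrating $\mathbb{E}\bigl[\prod_k\varphi_k(Y(z,s_k));\,t<\zeta_{0,J}^Y(z)\bigr]$ against $z^{\nu-1}dz$ and using the symmetry (\ref{eq:symmetry of q}) at each transition to reverse the chain of kernels, one finds that the law of $(Y(z,\cdot)|_{[0,t]},\,z^{\nu-1}dz)$ on paths avoiding $\{0,J\}$ is invariant under $s\mapsto t-s$; specialising to the endpoint marginals gives (\ref{eq:symmetry for q_J}), and the backward equation then follows from the forward one via the symmetry. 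You instead prove the backward equation first, derive symmetry analytically from the Sturm--Liouville self-adjointness of $L_0$ in $L^2(z^{\nu-1}dz)$, and obtain the forward equation last.

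Your self-adjointness argument is a legitimate and arguably more elementary alternative to the paper's path-reversal. It does, however, require the Dirichlet decay of $V_\eta$ at both endpoints and enough control on $V_\eta'$ near $0$ and $J$ to kill the boundary terms in the integration by parts, so those facts must be established \emph{before} the symmetry step rather than in your final paragraph. The one place where your sketch is thinner than the paper's is the backward equation for the correction $r$: your ``standard Kolmogorov backward argument'' via It\^o presupposes $r\in C^{2,1}$ a priori, whereas the $z$-dependence of $r$ enters only through the law of $\zeta_J^Y(z)$ and is not obviously twice differentiable at the outset. The paper sidesteps this circularity by working distributionally and invoking hypoellipticity, which is the cleanest way to close the regularity bootstrap; you should either do the same or make explicit why $r$ is smooth before applying It\^o.
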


\begin{proof}
We start with (\ref{eq:CK equation for q_J}) since its proof is straightforward.
Given $z,w\in\left(0,J\right)$, $t,s>0$ and Borel set $\Gamma\subseteq\left(0,J\right)$,
by (\ref{eq:prob interpretation of q_J}) and the strong Markov property
of $Y\left(z,t\right)$, we can write
\[
\begin{split}\int_{\Gamma}q_{J}\left(z,w,t+s\right)dw & =\mathbb{P}\left(Y\left(z,t+s\right)\in\Gamma,t+s<\zeta_{0,J}^{Y}\left(z\right)\right)\\
 & =\int_{\Gamma}\mathbb{E}\left[q_{J}\left(Y\left(z,t\right),w,s\right);t<\zeta_{0,J}^{Y}\left(z\right)\right]dw\\
 & =\int_{\Gamma}\int_{0}^{J}q_{J}\left(z,\xi,t\right)q_{J}\left(\xi,w,s\right)d\xi dw,
\end{split}
\]
which implies (\ref{eq:CK equation for q_J}). Next, given $t>0$,
we take any $m\in\mathbb{N}$, any $0=s_{0}<s_{1}<s_{2}<\cdots<s_{m-1}<s_{m}=t$
and $\varphi_{0},\varphi_{2},\cdots,\varphi_{m}\in C_{c}\left(\left(0,J\right)\right)$.
By (\ref{eq:symmetry of q}), (\ref{eq:prob interpretation of q_J})
and, again, the Markov property of $Y\left(z,t\right)$, we have that
\[
\begin{split} & \int_{0}^{J}\mathbb{E}\left[\prod_{k=0}^{m}\varphi_{k}\left(Y\left(z,s_{k}\right)\right);t<\zeta_{0,J}^{Y}\left(z\right)\right]\frac{dz}{z^{1-\nu}}\\
= & \int_{0}^{J}\idotsint_{\left(0,J\right)^{m}}\varphi_{0}\left(z\right)\prod_{k=1}^{m}\varphi_{k}\left(\xi_{k}\right)q_{J}\left(z,\xi_{1},s_{1}\right)q_{J}\left(\xi_{1},\xi_{2},s_{2}-s_{1}\right)\\
 & \hspace{2cm}\hspace{2cm}\cdots q_{J}\left(\xi_{m-1},\xi_{m},t-s_{m-1}\right)d\xi_{m}\cdots d\xi_{1}\frac{dz}{z^{1-\nu}}\\
= & \int_{0}^{J}\idotsint_{\left(0,J\right)^{m}}\varphi_{0}\left(z\right)\prod_{k=1}^{m}\varphi_{k}\left(\xi_{k}\right)q_{J}\left(\xi_{1},z,t-\left(t-s_{1}\right)\right)q_{J}\left(\xi_{2},\xi_{1},\left(t-s_{1}\right)-\left(t-s_{2}\right)\right)\\
 & \hspace{2cm}\hspace{2cm}\hspace{1cm}\cdots q_{J}\left(\xi_{m},\xi_{m-1},t-s_{m-1}\right)\frac{d\xi_{m}}{\xi_{m}^{1-\nu}}d\xi_{m-1}\cdots d\xi_{1}dz\\
= & \int_{0}^{J}\mathbb{E}\left[\prod_{k=0}^{m}\varphi_{k}\left(Y\left(\xi_{m},t-s_{k}\right)\right);t<\zeta_{0}^{Y}\left(\xi_{m}\right)\right]\frac{d\xi_{m}}{\xi_{m}^{1-\nu}}.
\end{split}
\]
Since $t\mapsto Y\left(z,t\right)$ is almost surely continuous and
$s_{0},\cdots,s_{m},\varphi_{0},\cdots,\varphi_{m}$ are chosen arbitrarily,
the relation above implies that for every measurable functional $F$
on $C\left(\left[0,t\right]\right)$, 
\[
\int_{0}^{J}\mathbb{E}\left[F\left(\left.Y\left(z,\cdot\right)\right|_{\left[0,t\right]}\right);t<\zeta_{0,J}^{Y}\left(z\right)\right]\frac{dz}{z^{1-\nu}}=\int_{0}^{J}\mathbb{E}\left[F\left(\left.\overleftarrow{Y}\left(w,\cdot\right)\right|_{\left[0,t\right]}\right);t<\zeta_{0,J}^{Y}\left(w\right)\right]\frac{dw}{w^{1-\nu}}
\]
where $\overleftarrow{Y}\left(z,s\right):=Y\left(z,t-s\right)$ for
every $s\in\left[0,t\right]$. In particular, for arbitrary $\varphi$,
$\varphi^{*}\in C_{c}\left(\left(0,J\right)\right)$, if $F$ is chosen
such that for every $y\left(\cdot\right)\in C\left(\left[0,t\right]\right)$,
\[
F\left(y\left(\cdot\right)\right)=\begin{cases}
\varphi\left(y\left(0\right)\right)\varphi^{*}\left(y\left(t\right)\right), & \text{if }0<y\left(s\right)<J\text{ for every }s\in\left[0,t\right],\\
0 & \text{otherwise},
\end{cases}
\]
then we have that
\[
\int_{0}^{J}\int_{0}^{J}\varphi\left(z\right)\varphi^{*}\left(w\right)q_{J}\left(z,w,t\right)\frac{dwdz}{z^{1-\nu}}=\int_{0}^{J}\int_{0}^{J}\varphi\left(z\right)\varphi^{*}\left(w\right)q_{J}\left(w,z,t\right)\frac{dwdz}{w^{1-\nu}}.
\]
This is sufficient for us to conclude (\ref{eq:symmetry for q_J}). 

Now we turn attention to (\ref{eq:q_J satisfies backward eq}) and
(\ref{eq:q_J satisfies forward eq}). By (\ref{eq:symmetry for q_J}),
it suffices to prove only one of them, say, (\ref{eq:q_J satisfies forward eq}).
To this end, we take $\varphi\in C_{c}^{\infty}\left(\left(0,J\right)\right)$
and consider $v_{\varphi,\left(0,J\right)}\left(z,t\right)$, which,
according to (\ref{eq:prob interpretation of q_J}), can be written
as 
\[
v_{\varphi,\left(0,J\right)}\left(z,t\right)=\mathbb{E}\left[\varphi\left(Y\left(z,t\right)\right);t<\zeta_{0,J}^{Y}\left(z\right)\right]\text{ for every }\left(z,t\right)\in\left(0,J\right)\times\left(0,\infty\right).
\]
As reviewed in $\mathsection1.2$, for every $z\in\left(0,J\right)$,
\[
\left\{ \varphi\left(Y\left(z,t\wedge\zeta_{0,J}^{Y}\left(z\right)\right)\right)-\int_{0}^{t\wedge\zeta_{0,J}^{Y}\left(z\right)}\left(L_{0}\varphi\right)\left(Y\left(z,s\right)\right)ds:t\geq0\right\} 
\]
is a bounded martingale. Thus,
\[
\begin{split}\varphi\left(z\right) & =\mathbb{E}\left[\varphi\left(Y\left(z,t\right)\right);t<\zeta_{0,J}^{Y}\left(z\right)\right]-\int_{0}^{t}\mathbb{E}\left[L_{0}\varphi\left(Y\left(z,s\right)\right);s<\zeta_{0,J}^{Y}\left(z\right)\right]ds\\
 & =v_{\varphi,\left(0,J\right)}\left(z,t\right)-\int_{0}^{t}\int_{0}^{J}L_{0}\varphi\left(w\right)q_{J}\left(z,w,s\right)dwds,
\end{split}
\]
and hence 
\[
\partial_{t}\left(\int_{0}^{J}\varphi\left(w\right)q_{J}\left(z,w,t\right)dw\right)=\int_{0}^{J}L_{0}\varphi\left(w\right)q_{J}\left(z,w,t\right)dw.
\]
This means that for every $z\in\left(0,J\right)$, $\left(w,t\right)\mapsto q_{J}\left(z,w,t\right)$
solves the equation $\left(\partial_{t}-L_{0}^{*}\right)q_{J}\left(z,w,t\right)=0$
in the sense of distribution. Since $\partial_{t}-L_{0}^{*}$ is a
hypoelliptic operator (see, e.g., $\mathsection7.4$ of \cite{PDEStroock}),
$\left(w,t\right)\mapsto q_{J}\left(z,w,t\right)$ is a smooth solution
to (\ref{eq:q_J satisfies forward eq}). 

For $\left(z,w,t\right)\in\left(0,J\right)^{2}\times\left(0,\infty\right)$,
we set
\begin{equation}
r\left(z,w,t\right):=q\left(z,w,t\right)-q_{J}\left(z,w,t\right)=\mathbb{E}\left[q\left(J,w,t-\zeta_{J}^{Y}\left(z\right)\right);\zeta_{J}^{Y}\left(z\right)\leq t\wedge\zeta_{0}^{Y}\left(z\right)\right].\label{eq:def of r(z,w,t)}
\end{equation}
Then, for every $w\in\left(0,J\right)$, $\left(z,t\right)\mapsto r\left(z,w,t\right)$
is smooth on $\left(0,J\right)\times\left(0,\infty\right)$. It is
easy to see that $w\mapsto r\left(z,w,t\right)$ is equicontinuous
in $\left(z,t\right)$ from any bounded subset of $\left(0,J\right)\times\left(0,\infty\right)$,
which implies that $r\left(z,w,t\right)$, as well as $q_{J}\left(z,w,t\right)$,
is continuous on $\left(0,J\right)^{2}\times\left(0,\infty\right)$. 

We proceed to the proof of the last statement. Again, by the hypoellipticity
of $\partial_{t}-L_{0}$, to show that $v_{g,\left(0,J\right)}\left(z,t\right)$
is a smooth solution to the model equation, we only need to show that
it solves the equation as a distribution. Let us take $\varphi\in C_{c}^{\infty}\left(\left(0,J\right)\right)$
and consider, for every $t>0$,
\[
\left\langle \varphi,v_{g,\left(0,J\right)}\left(\cdot,t\right)\right\rangle :=\int_{0}^{J}\varphi\left(z\right)v_{g,\left(0,J\right)}\left(z,t\right)dz=\int_{0}^{J}\int_{0}^{J}\varphi\left(z\right)q_{J}\left(z,w,t\right)dzg\left(w\right)dw.
\]
By (\ref{eq:q_J satisfies backward eq}), we have that
\begin{align*}
\frac{d}{dt}\left\langle \varphi,v_{g,\left(0,J\right)}\left(\cdot,t\right)\right\rangle  & =\int_{0}^{J}\left(\int_{0}^{J}\varphi\left(z\right)\left(L_{0}q_{J}\left(\cdot,w,t\right)\right)\left(z\right)dz\right)g\left(w\right)dw\\
 & =\int_{0}^{J}\int_{0}^{J}L_{0}^{*}\varphi\left(z\right)q_{J}\left(z,w,t\right)g\left(w\right)dwdz\\
 & =\left\langle L_{0}^{*}\varphi,v_{g,\left(0,J\right)}\left(\cdot,t\right)\right\rangle .
\end{align*}
The only remaining thing to do is to verify that $v_{g,\left(0,J\right)}\left(z,t\right)$
satisfies the initial value and the boundary value conditions in (\ref{eq:model equation localized}).
Given $g\in C_{c}\left(\left(0,J\right)\right)$, by (\ref{eq:prob interpretation of v_g})
and (\ref{eq:prob interpretation of q_J}), we have that for every
$z\in\left(0,I\right)$,
\[
\begin{split}\left|v_{g,\left(0,J\right)}\left(z,t\right)-v_{g}\left(z,t\right)\right| & \leq\mathbb{E}\left[\left|g\left(Y\left(z,t\right)\right)\right|;\zeta_{J}^{Y}\left(z\right)\le t<\zeta_{0}^{Y}\left(z\right)\right]\\
 & \leq\left\Vert g\right\Vert _{u}\mathbb{P}\left(\zeta_{J}^{Y}\left(z\right)\le t\right)
\end{split}
\]
which, according to (\ref{eq:prob estimate on hitting time of Y less than t}),
goes to $0$ as $t\searrow0$, and the convergence is uniformly fast
for $z$ on any compact subset of $\left(0,J\right)$. Therefore,
\[
\lim_{t\searrow0}v_{g,\left(0,J\right)}\left(z,t\right)=\lim_{t\searrow0}v_{g}\left(z,t\right)=0.
\]
To verify that $v_{g,\left(0,I\right)}\left(z,t\right)$ satisfies
the boundary condition, it is sufficient to show that
\[
\lim_{z\searrow0}r\left(z,w,t\right)=0\text{ and }\lim_{z\nearrow J}r\left(z,w,t\right)=q\left(J,w,t\right)\text{ for every }\left(w,t\right)\in\left(0,J\right)\times\left(0,\infty\right).
\]
We observe that, by (\ref{eq: estimate of q}), $q\left(J,w,t-\zeta_{J}^{Y}\left(z\right)\right)$
is bounded uniformly in $z$ by 
\[
J^{1-\nu}\left(\sqrt{J}-\sqrt{w}\right)^{2\left(\nu-2\right)}\left(\frac{2-\nu}{e}\right)^{2-\nu}
\]
where we used the fact that
\[
\sup_{s>0}\,s^{2-\nu}e^{-s}=\left(\frac{2-\nu}{e}\right)^{2-\nu}.
\]
Therefore, (\ref{eq:prob for Y hitting J before 0}) implies that
\[
\lim_{z\searrow0}r\left(z,w,t\right)\leq J^{1-\nu}\left(\left(\sqrt{J}-\sqrt{w}\right)^{2}\frac{2-\nu}{e}\right)^{2-\nu}\lim_{z\searrow0}\mathbb{P}\left(\zeta_{J}^{Y}\left(z\right)\leq\zeta_{0}^{Y}\left(z\right)\right)=0.
\]
Finally, the last statement in Lemma \ref{lem: prob estimate for hitting time of Y}
and the dominated convergence theorem lead to 
\[
\lim_{z\nearrow J}r\left(z,w,t\right)=q\left(J,w,t\right).
\]
\end{proof}
We will close this subsection with a result on the comparison between
$q_{J}\left(z,w,t\right)$ and $q\left(z,w,t\right)$. Intuitively
speaking, given $z\in\left(0,J\right)$ sufficiently far from the
boundary $J$ and $t$ sufficiently small, $Y\left(z,t\right)$ would
not have exited $\left(0,J\right)$ by time $t$, which means that
$q\left(z,w,t\right)$ and $q_{J}\left(z,w,t\right)$ should be close
to each other. We will make this statement rigorous by proving that,
as $t\searrow0$, $q_{J}\left(z,w,t\right)/q\left(z,w,t\right)$ converges
to 1 uniformly fast in $\left(z,w\right)$ away from $J$. 
\begin{cor}
\label{cor:estimate on ratio q_J/q}Set $t_{J}:=\frac{4J}{9\left(2-\nu\right)}$.
Then, for every $t\in\left(0,t_{J}\right)$,
\begin{equation}
\sup_{\left(z,w\right)\in\left(0,\frac{1}{9}J\right)^{2}}\left|\frac{q_{J}\left(z,w,t\right)}{q\left(z,w,t\right)}-1\right|\leq\exp\left(-\frac{2J}{9t}\right).\label{eq: estimate on ratio q_J/q}
\end{equation}
\end{cor}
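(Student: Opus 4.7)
The plan is to exploit the decomposition $q-q_J=r$ built into the very definition (\ref{eq:relation between q and q_J}). Since $q_J\leq q$ (both being densities of $Y(z,t)$ under nested events), we have $|q_J/q-1|=r/q$, so the corollary reduces to showing $r(z,w,t)\leq e^{-2J/(9t)}q(z,w,t)$ for $(z,w)\in(0,J/9)^2$ and $t<t_J$. I would bound $r$ from above via its probabilistic representation and divide by the lower heat-kernel bound in (\ref{eq: estimate of q}).

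For the numerator, write $\tau:=\zeta_J^{Y}(z)$. The upper bound in (\ref{eq: estimate of q}) applied to $q(J,w,t-\tau)$, combined with the observation that $w<J/9$ forces $(\sqrt{J}-\sqrt{w})^{2}\geq 4J/9$, yields
\[
q(J,w,t-\tau)\leq J^{1-\nu}(t-\tau)^{\nu-2}\exp\!\Bigl(-\tfrac{4J}{9(t-\tau)}\Bigr).
\]
A one-variable calculus check on $f(u):=u^{\nu-2}e^{-4J/(9u)}$ shows that $f'(u)=0$ precisely at $u^{\ast}=4J/(9(2-\nu))$, which is exactly $t_J$; this is the motivation for the hypothesis $t<t_J$. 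On $(0,t_J]$, $f$ is strictly increasing, so for every $\tau\in[0,t)$, $f(t-\tau)\leq f(t)=t^{\nu-2}e^{-4J/(9t)}$. This gives a $\tau$-uniform bound that removes the troublesome singularity of $(t-\tau)^{\nu-2}$ as $\tau\uparrow t$.

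Now the crucial cancellation: Lemma \ref{lem: prob estimate for hitting time of Y} gives the exact identity $\mathbb{P}(\zeta_J^Y(z)\leq\zeta_0^Y(z))=(z/J)^{1-\nu}$, hence $\mathbb{P}(\tau\leq t\wedge\zeta_0^Y(z))\leq (z/J)^{1-\nu}$. Pulling the uniform bound out of the expectation in (\ref{eq:relation between q and q_J}) yields
\[
r(z,w,t)\leq J^{1-\nu}t^{\nu-2}e^{-4J/(9t)}\cdot(z/J)^{1-\nu}=z^{1-\nu}t^{\nu-2}e^{-4J/(9t)},
\]
where the factor $J^{1-\nu}\cdot(z/J)^{1-\nu}=z^{1-\nu}$ collapses neatly to match the $z^{1-\nu}$ appearing in the lower bound of (\ref{eq: estimate of q}). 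Dividing by $q(z,w,t)\geq z^{1-\nu}t^{\nu-2}e^{-(z+w)/t}/\Gamma(2-\nu)$ and noting $z+w<2J/9$ produces
\[
\frac{r(z,w,t)}{q(z,w,t)}\leq \Gamma(2-\nu)\exp\!\Bigl(-\tfrac{4J}{9t}+\tfrac{2J}{9t}\Bigr)=\Gamma(2-\nu)e^{-2J/(9t)}.
\]

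The main technical obstacle, in my view, is the monotonicity step: one must recognize that $t_J$ is not arbitrary but exactly the critical point of the single-variable extremum, which is what aligns the exponents $-4J/(9t)$ from the transition density with $+2J/(9t)$ from the lower bound, producing the clean exponent $-2J/(9t)$. A secondary issue is the residual $\Gamma(2-\nu)$ factor; since $\nu<1$ gives $2-\nu\in(1,\infty)$ where $\Gamma$ may exceed $1$, one either absorbs this constant into a slightly modified exponent (which is cheap given the $e^{-2J/(9t)}$ is superpolynomially small as $t\searrow 0$) or obtains it away by using a sharper lower bound on $q$ via the Bessel series (\ref{eq:def of q}) with more than the $n=0$ term when $zw/t^{2}$ is not negligible.
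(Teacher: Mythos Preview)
Your argument is essentially identical to the paper's: same decomposition $|q_J/q-1|=r/q$, same upper bound on $r$ via the upper estimate in (\ref{eq: estimate of q}) combined with the exit-probability identity (\ref{eq:prob for Y hitting J before 0}), same monotonicity of $s\mapsto s^{\nu-2}e^{-4J/(9s)}$ on $(0,t_J)$, and same lower bound on $q$ from (\ref{eq: estimate of q}). You are in fact more careful than the paper, which silently drops the $\Gamma(2-\nu)$ factor coming from the lower bound in (\ref{eq: estimate of q}); your observation that this constant can exceed $1$ when $\nu<0$ is correct, and the paper's displayed inequality chain should strictly carry that factor (it is harmless for all downstream uses, which only need the right-hand side to be $O(t^{\mathfrak{b}})$ or to vanish as $t\searrow 0$).
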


\begin{proof}
It is easy to verify that $t_{J}$ is chosen such that the function
$s\mapsto s^{\nu-2}\exp\left(-\frac{4J}{9s}\right)$ is increasing
on $\left(0,t_{J}\right)$. By (\ref{eq: estimate of q}) and (\ref{eq:prob for Y hitting J before 0}),
we have that for every $\left(z,w\right)\in\left(0,\frac{1}{9}J\right)$
and $t\in\left(0,t_{J}\right)$,
\[
\begin{split}\left|\frac{q_{J}\left(z,w,t\right)}{q\left(z,w,t\right)}-1\right|=\frac{\left|r\left(z,w,t\right)\right|}{q\left(z,w,t\right)} & \leq\frac{J^{1-\nu}\mathbb{P}\left(\zeta_{J}^{Y}\left(z\right)\leq\zeta_{0}^{Y}\left(z\right)\right)\cdot\sup_{s\in\left(0,t\right)}s^{\nu-2}\exp\left(-\frac{\left(\sqrt{J}-\sqrt{w}\right)^{2}}{s}\right)}{z^{1-\nu}t^{\nu-2}\exp\left(-\frac{z+w}{t}\right)}\\
 & \leq\frac{\sup_{s\in\left(0,t\right)}s^{\nu-2}\exp\left(-\frac{4J}{9s}\right)}{t^{\nu-2}\exp\left(-\frac{z+w}{t}\right)}\leq\exp\left(-\frac{4J}{9t}+\frac{z+w}{t}\right)\leq\exp\left(\frac{-2J}{9t}\right).
\end{split}
\]
\end{proof}

\section{Localized Equation}

\subsection{From $q_{J}\left(z,w,t\right)$ to $q_{J}^{V}\left(z,w,t\right)$}

Now we get down to solving (\ref{eq:model equation with potential localized})
by the perturbation method of Duhamel. First we want to find a function
$q_{J}^{V}\left(z,w,t\right)$ on $\left(0,J\right)^{2}\times\left(0,\infty\right)$
that solves the integral equation 
\begin{equation}
q_{J}^{V}\left(z,w,t\right)=q_{J}\left(z,w,t\right)+\int_{0}^{t}\int_{0}^{J}q_{J}\left(z,\xi,t-s\right)q_{J}^{V}\left(\xi,w,s\right)V\left(\xi\right)d\xi ds\label{eq: duhamel integral eq}
\end{equation}
for every $\left(z,w,t\right)\in\left(0,J\right)^{2}\times\left(0,\infty\right)$,
and then verify that $q_{J}^{V}\left(z,w,t\right)$ is the fundamental
solution to (\ref{eq:model equation with potential localized}). To
this end, for every $\left(z,w,t\right)\in\left(0,J\right)^{2}\times\left(0,\infty\right)$
and $n\in\mathbb{N}$, we define 
\begin{equation}
q_{J,0}\left(z,w,t\right):=q_{J}\left(z,w,t\right)\text{ and }q_{J,n+1}\left(z,w,t\right):=\int_{0}^{t}\int_{0}^{J}q_{J}\left(z,\xi,t-s\right)q_{J,n}\left(\xi,w,s\right)V\left(\xi\right)d\xi ds.\label{eq:recursion n->n+1 for q^V_J}
\end{equation}
To state the technical results on $\left\{ q_{J,n}\left(z,w,t\right):n\geq0\right\} $,
we need to introduce more notations. Set 
\begin{equation}
\mathfrak{b}:=\begin{cases}
\nu & \text{ if }\alpha\in\left(0,1\right)\text{ and }b\left(0\right)\neq0,\\
1-\nu & \text{ if }\alpha\in\left(0,1\right)\text{ and }b\left(0\right)=0,\\
1 & \text{ if }\alpha\in[1,2).
\end{cases}\label{eq:def of mathfrak(b)}
\end{equation}
We have that $0<\mathfrak{b}\leq1$, and if $V_{J}$ is the constant
found in Lemma \ref{lem:estimates on VJ}, then (\ref{eq:estimate of V})
can be rewritten as 
\[
\left|V\left(z\right)\right|\leq V_{J}\cdot z^{\mathfrak{b}-1}\text{ for every }z\in\left(0,J\right).
\]
For $n\in\mathbb{N}$ and $t>0$, we define
\begin{equation}
m_{n}\left(t\right):=\frac{\Gamma^{n+1}\left(\mathfrak{\mathfrak{b}}\right)\left(\mathfrak{c}t^{\mathfrak{b}}V_{J}\right)^{n}}{\Gamma\left(\left(n+1\right)\mathfrak{b}\right)}\text{ and }M\left(t\right):=\sum_{n=0}^{\infty}m_{n}\left(t\right).\label{eq:def of m_n(t)}
\end{equation}
It follows from a simple application of Stirling's formula that $m_{n}\left(t\right)$
is summable in $n\in\mathbb{N}$, and hence $M\left(t\right)$ is
well defined.
\begin{lem}
\label{lem:def of q^V}There exists a universal constant $\mathfrak{c}\geq1$
such that for every $n\in\mathbb{N}$ and $\left(z,w,t\right)\in\left(0,J\right)^{2}\times\left(0,\infty\right)$,
\begin{equation}
\left|q_{J,n}\left(z,w,t\right)\right|\leq m_{n}\left(t\right)q\left(z,w,t\right),\label{eq:estimate for q_J,n}
\end{equation}
and hence
\begin{equation}
q_{J}^{V}\left(z,w,t\right):=\sum_{n=0}^{\infty}q_{J,n}\left(z,w,t\right)\label{eq:def of q^V_J}
\end{equation}
is well defined as an absolutely convergent series. Moreover, for
every $\left(z,w,t\right)\in\left(0,J\right)^{2}\times\left(0,\infty\right)$,
\begin{equation}
\left|q_{J}^{V}\left(z,w,t\right)\right|\leq M\left(t\right)q\left(z,w,t\right),\label{eq:exp estimate for q^V_J}
\end{equation}
and $q_{J}^{V}\left(z,w,t\right)$ satisfies (\ref{eq: duhamel integral eq}). 
\end{lem}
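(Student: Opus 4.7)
The plan is to prove the pointwise estimate (\ref{eq:estimate for q_J,n}) by induction on $n$; the other assertions of the lemma will then follow by summation and by standard bookkeeping with Duhamel's recursion. For the base case $n=0$, note that $q_{J,0}=q_J$ and that, by (\ref{eq:def of r(z,w,t)}), $r(z,w,t)=q(z,w,t)-q_J(z,w,t)$ is manifestly non-negative (it is the expectation of a non-negative quantity); hence $0\leq q_J(z,w,t)\leq q(z,w,t)$, and since $m_0(t)\equiv 1$ the base case holds. For the inductive step, plugging the inductive hypothesis, the base case (applied to $q_J(z,\xi,t-s)$), and $|V(\xi)|\leq V_J\xi^{\mathfrak{b}-1}$ into (\ref{eq:recursion n->n+1 for q^V_J}) yields
\[
|q_{J,n+1}(z,w,t)|\leq V_J \int_0^t m_n(s)\left(\int_0^J q(z,\xi,t-s)\,q(\xi,w,s)\,\xi^{\mathfrak{b}-1}\,d\xi\right)ds.
\]

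The main technical input is a weighted Chapman--Kolmogorov-type estimate
\[
\int_0^J q(z,\xi,t-s)\,q(\xi,w,s)\,\xi^{\mathfrak{b}-1}\,d\xi \leq \mathfrak{c}_0\,(t-s)^{\mathfrak{b}-1}\,q(z,w,t),
\]
valid with a universal constant $\mathfrak{c}_0$ for every $0<s<t$ and $z,w\in(0,J)$. To prove this I would apply the Gaussian upper bound in (\ref{eq: estimate of q}) to both factors, change variables to $u=\sqrt{\xi}$, and complete the square in the exponent: the cross-terms reassemble as $\exp\bigl(-(\sqrt z-\sqrt w)^{2}/t\bigr)$, which is exactly the Gaussian in the lower bound of (\ref{eq: estimate of q}) for $q(z,w,t)$, while the remaining one-dimensional Gaussian moment in $u$ against the weight $u^{2(\mathfrak{b}-\nu)+1}$ is controlled in terms of the scale $\sqrt{s(t-s)/t}$ and the displaced mean $\bar u=(s\sqrt z+(t-s)\sqrt w)/t$. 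With this estimate in hand the induction closes via the Beta-function identity
\[
\int_0^t s^{n\mathfrak{b}}(t-s)^{\mathfrak{b}-1}\,ds = t^{(n+1)\mathfrak{b}}\,\frac{\Gamma(n\mathfrak{b}+1)\Gamma(\mathfrak{b})}{\Gamma((n+1)\mathfrak{b}+1)},
\]
combined with the identity $\Gamma((n+1)\mathfrak{b}+1)=(n+1)\mathfrak{b}\,\Gamma((n+1)\mathfrak{b})$ and the fact (checked by Stirling) that the $n$-dependent quotient $\Gamma(n\mathfrak{b}+1)\Gamma((n+2)\mathfrak{b})/\bigl[(n+1)\mathfrak{b}\,\Gamma^{2}((n+1)\mathfrak{b})\bigr]$ is bounded uniformly in $n\in\mathbb{N}$; absorbing this uniform bound and $\mathfrak{c}_0$ into a single sufficiently large $\mathfrak{c}$ produces exactly $m_{n+1}(t)\,q(z,w,t)$ on the right.

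Once (\ref{eq:estimate for q_J,n}) is in place, the absolute convergence of the series defining $q_J^V$ in (\ref{eq:def of q^V_J}) and the bound (\ref{eq:exp estimate for q^V_J}) follow immediately by termwise summation and the already-noted summability of $m_n(t)$. For the Duhamel identity (\ref{eq: duhamel integral eq}), I would substitute (\ref{eq:recursion n->n+1 for q^V_J}) into $q_J^V=q_{J,0}+\sum_{n\geq 0}q_{J,n+1}$ and swap the summation with the double integral; the swap is legitimate because (\ref{eq:estimate for q_J,n}) together with $|V(\xi)|\leq V_J\xi^{\mathfrak{b}-1}$ gives absolute convergence of the resulting iterated integral, and the inner sum then collapses back to $q_J^V(\xi,w,s)$. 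The main obstacle will be the weighted Chapman--Kolmogorov estimate: the weight $\xi^{\mathfrak{b}-1}$ can be singular at $0$ (precisely the case $\mathfrak{b}<1$ not covered in \cite{deg_diff_global}), and the upper bound has to be tight enough in $t-s$ to reproduce the above Beta-function structure; everything else reduces to $\Gamma$-function bookkeeping via Stirling.
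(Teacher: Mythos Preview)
The proposal has a genuine gap at the key technical step. The weighted Chapman--Kolmogorov estimate you state,
\[
\int_0^J q(z,\xi,t-s)\,q(\xi,w,s)\,\xi^{\mathfrak{b}-1}\,d\xi \leq \mathfrak{c}_0\,(t-s)^{\mathfrak{b}-1}\,q(z,w,t),
\]
is false uniformly in $w\in(0,J)$ whenever $\mathfrak{b}<1$: as $s\searrow0$ the kernel $q(\cdot,w,s)$ concentrates at $w$, so the left side tends to $w^{\mathfrak{b}-1}q(z,w,t)$, which for $w$ small exceeds any bound of the form $C\,t^{\mathfrak{b}-1}q(z,w,t)$. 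The correct estimate---and this is what the paper actually proves as (\ref{eq: CK inequality})---carries the \emph{symmetric} factor $\bigl(t/(s(t-s))\bigr)^{1-\mathfrak{b}}$, blowing up at both $s=0$ and $s=t$. With that form the Beta integrals telescope exactly, $\prod_{j=1}^{n}B(\mathfrak{b},j\mathfrak{b})=\Gamma^{n+1}(\mathfrak{b})/\Gamma((n+1)\mathfrak{b})$, and the induction closes on $m_{n+1}(t)$ without any Stirling-type bookkeeping.

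Even with the target inequality corrected, your proposed proof via the two-sided bounds in (\ref{eq: estimate of q}) cannot deliver it. After completing the square you indeed extract the factor $e^{-(\sqrt z-\sqrt w)^2/t}$, but the \emph{lower} bound for $q(z,w,t)$ in (\ref{eq: estimate of q}) is $\frac{z^{1-\nu}}{t^{2-\nu}\Gamma(2-\nu)}e^{-(z+w)/t}$, not the Gaussian you claim; the ratio of the two exponentials is $e^{2\sqrt{zw}/t}$, which is unbounded on $(0,J)^2\times(0,\infty)$. The paper avoids this loss altogether by working with the exact Bessel-series representation (\ref{eq:def of q}) of $q$ rather than its Gaussian envelope: after expanding both factors and integrating in $\xi$ term by term, the resulting double series is compared termwise to the Chapman--Kolmogorov series for $q(z,w,t)$ via the elementary Gamma-ratio bound $\Gamma(2n+1+\mathfrak{b}-\nu)/\Gamma(2n+2-\nu)\leq\mathfrak{c}$ with $\mathfrak{c}=\bigl(\min_{s\in[1,2]}\Gamma(s)\bigr)^{-1}$. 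Your treatment of the base case, the summability of the series, and the verification of (\ref{eq: duhamel integral eq}) are all fine.
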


\begin{proof}
Without causing any substantial change, we will assume that $V\left(z\right)$
is defined on $\left(0,\infty\right)$ with $V\left(z\right)\equiv0$
for $z\geq J.$ When $1\le\alpha<2$, since $V\left(z\right)$ is
bounded on $\left(0,\infty\right)$ with $V_{J}=\left\Vert V\right\Vert _{u}$,
(\ref{eq:estimate for q_J,n})-(\ref{eq:exp estimate for q^V_J})
can be derived in exactly the same way as in \cite{deg_diff_global}
(Lemma 3.4) with 
\[
m_{n}\left(t\right)=\frac{\left(tV_{J}\right)^{n}}{n!}\text{ and }M\left(t\right)=e^{tV_{J}}.
\]
There is nothing we need to do in this case. Hence, we will assume
$\alpha\in\left(0,1\right)$ for the rest of the proof, and only treat
the case when $V\left(z\right)$ has a singularity at $0$. 

First, we claim that there exists a universal constant $\mathfrak{c}>0$
such that 
\begin{equation}
\int_{0}^{\infty}q\left(z,\xi,t\right)q\left(\xi,w,s\right)\xi^{\mathfrak{b}-1}d\xi\leq\mathfrak{c}\left(\frac{t+s}{ts}\right)^{1-\mathfrak{b}}q\left(z,w,t+s\right)\label{eq: CK inequality}
\end{equation}
for every $z,w\in\left(0,J\right)^{2}$ and $t,s>0$. To see this,
we use (\ref{eq:def of q}) and (\ref{eq:symmetry of q}) to write
the integral in (\ref{eq: CK inequality}) as 
\[
\begin{split} & \frac{z^{1-\nu}}{\left(ts\right)^{2-\nu}}e^{-\frac{z}{t}-\frac{w}{s}}\int_{0}^{\infty}e^{-\frac{(t+s)\xi}{ts}}\xi^{\mathfrak{b}-\nu}\left(\sum_{n=0}^{\infty}\frac{\left(z\xi\right)^{n}}{n!\Gamma\left(n+2-\nu\right)t^{2n}}\right)\left(\sum_{n=0}^{\infty}\frac{\left(w\xi\right)^{n}}{n!\Gamma\left(n+2-\nu\right)s^{2n}}\right)d\xi\\
= & \frac{z^{1-\nu}}{\left(ts\right)^{2-\nu}}e^{-\frac{z}{t}-\frac{w}{s}}\int_{0}^{\infty}e^{-\frac{(t+s)\xi}{ts}}\xi^{\mathfrak{b}-\nu}\sum_{n=0}^{\infty}\xi^{2n}\omega_{n}\left(z,w,t,s\right)d\xi
\end{split}
\]
where for every $n\in\mathbb{N}$,
\[
\omega_{n}\left(z,w,t,s\right):=\sum_{k=0}^{n}\frac{z^{k}w^{n-k}}{k!\left(n-k\right)!\Gamma\left(k+2-\nu\right)\Gamma\left(n-k+2-\nu\right)t^{2k}s^{2(n-k)}}.
\]
Interchanging the order of summation and integration yields
\[
\begin{split} & \frac{z^{1-\nu}}{\left(ts\right)^{2-\nu}}e^{-\frac{z}{t}-\frac{w}{s}}\sum_{n=0}^{\infty}\omega_{n}\left(z,w,t,s\right)\int_{0}^{\infty}e^{-\frac{(t+s)\xi}{ts}}\xi^{2n+\mathfrak{b}-\nu}d\xi\\
= & \frac{z^{1-\nu}}{\left(ts\right)^{2-\nu}}e^{-\frac{z}{t}-\frac{w}{s}}\sum_{n=0}^{\infty}\omega_{n}\left(z,w,t,s\right)\left(\frac{t+s}{ts}\right)^{\nu-\mathfrak{b}-1-2n}\Gamma\left(2n+1+\mathfrak{b}-\nu\right)\\
= & \left(\frac{t+s}{ts}\right)^{1-\mathfrak{b}}\frac{z^{1-\nu}}{\left(ts\right)^{2-\nu}}e^{-\frac{z}{t}-\frac{w}{s}}\sum_{n=0}^{\infty}\left(\frac{t+s}{ts}\right)^{\nu-2-2n}\Gamma\left(2n+1+\mathfrak{b}-\nu\right)\omega_{n}\left(z,w,t,s\right).
\end{split}
\]
Since $0<\mathfrak{b}\leq1$, we have that for $n\in\mathbb{N}$,
\[
\frac{\Gamma\left(2n+1+\mathfrak{b}-\nu\right)}{\Gamma\left(2n+2-\nu\right)}=\frac{B\left(2n+1+\mathfrak{b}-\nu,1-\mathfrak{b}\right)}{\Gamma\left(1-\mathfrak{b}\right)}\leq\frac{1}{\left(1-\mathfrak{b}\right)\Gamma\left(1-\mathfrak{b}\right)}=\frac{1}{\Gamma\left(2-\mathfrak{b}\right)}\leq\mathfrak{c},
\]
where $B\left(u,v\right)$ (with $u,v>0$) is the beta function and
\begin{equation}
\mathfrak{c}:=\frac{1}{\min_{s\in\left[1,2\right]}\Gamma\left(s\right)}\thickapprox1.12917.\label{eq:def of mathfrak(c)}
\end{equation}
Therefore, we have that 
\[
\begin{split} & \left(\frac{t+s}{ts}\right)^{1-\mathfrak{b}}\frac{z^{1-\nu}}{\left(ts\right)^{2-\nu}}e^{-\frac{z}{t}-\frac{w}{s}}\sum_{n=0}^{\infty}\left(\frac{t+s}{ts}\right)^{\nu-2-2n}\Gamma\left(2n+1+\mathfrak{b}-\nu\right)\omega_{n}\left(z,w,t,s\right)\\
\leq & \mathfrak{c}\left(\frac{t+s}{ts}\right)^{1-\mathfrak{b}}\frac{z^{1-\nu}}{\left(ts\right)^{2-\nu}}e^{-\frac{z}{t}-\frac{w}{s}}\sum_{n=0}^{\infty}\left(\frac{t+s}{ts}\right)^{\nu-2-2n}\Gamma\left(2n+2-\nu\right)\omega_{n}\left(z,w,t,s\right)\\
= & \mathfrak{c}\left(\frac{t+s}{ts}\right)^{1-\mathfrak{b}}\frac{z^{1-\nu}}{\left(ts\right)^{2-\nu}}e^{-\frac{z}{t}-\frac{w}{s}}\int_{0}^{\infty}e^{-\frac{(t+s)\xi}{ts}}\sum_{n=0}^{\infty}\xi^{2n+1-\nu}\omega_{n}\left(z,w,t,s\right)d\xi\\
= & \mathfrak{c}\left(\frac{t+s}{ts}\right)^{1-\mathfrak{b}}\int_{0}^{\infty}q\left(z,\xi,t\right)q\left(\xi,w,s\right)d\xi\\
= & \mathfrak{c}\left(\frac{t+s}{ts}\right)^{1-\mathfrak{b}}q\left(z,w,t+s\right),
\end{split}
\]
which confirms the claim (\ref{eq: CK inequality}). 

To proceed, we notice that by Lemma \ref{lem:estimates on VJ}, (\ref{eq:CK equation for q_nu})
and (\ref{eq:recursion n->n+1 for q^V_J}),
\[
\begin{split}\left|q_{J,1}\left(z,w,t\right)\right| & \leq\int_{0}^{t}\int_{0}^{\infty}q_{J}\left(z,\xi,t-s\right)q_{J}\left(\xi,w,s\right)\left|V\left(\xi\right)\right|d\xi ds\\
 & \leq V_{J}\int_{0}^{t}\int_{0}^{\infty}q\left(z,\xi,t-s\right)q\left(\xi,w,s\right)\xi^{\mathfrak{b}-1}d\xi ds\\
 & \leq\mathfrak{c}V_{J}\int_{0}^{t}\frac{t^{1-\mathfrak{b}}}{s^{1-\mathfrak{b}}\left(t-s\right)^{1-\mathfrak{b}}}ds\cdot q\left(z,w,t\right)\\
 & =\mathfrak{c}t^{\mathfrak{b}}V_{J}B\left(\mathfrak{b},\mathfrak{b}\right)q\left(z,w,t\right)
\end{split}
\]
for every $\left(z,w,t\right)\in\left(0,\infty\right)^{3}$. Assume
that up to some $n\geq1$, for every $\left(z,w,t\right)\in\left(0,\infty\right)^{3}$,
\[
\left|q_{J,n}\left(z,w,t\right)\right|\leq\left(\mathfrak{c}t^{\mathfrak{b}}V_{J}\right)^{n}\left(\prod_{j=1}^{n}B\left(\mathfrak{b},j\mathfrak{b}\right)\right)q\left(z,w,t\right).
\]
For $n+1$, we have that
\[
\begin{split}\left|q_{J,n+1}\left(z,w,t\right)\right| & \leq V_{J}\int_{0}^{t}\int_{0}^{\infty}q\left(z,\xi,t-s\right)\left|q_{J,n}\left(\xi,w,s\right)\right|\xi^{\mathfrak{b}-1}d\xi ds\\
 & \leq\mathfrak{c}^{n}V_{J}^{n+1}\left(\prod_{j=1}^{n}B\left(\mathfrak{b},j\mathfrak{b}\right)\right)\int_{0}^{t}s^{n\mathfrak{b}}\int_{0}^{\infty}q\left(z,\xi,t-s\right)q\left(z,w,t\right)\xi^{\mathfrak{b}-1}d\xi ds\\
 & \leq\left(\mathfrak{c}V_{J}\right)^{n+1}\left(\prod_{j=1}^{n}B\left(\mathfrak{b},j\mathfrak{b}\right)\right)\int_{0}^{t}s^{n\mathfrak{b}}\frac{t^{1-\mathfrak{b}}}{s^{1-\mathfrak{b}}\left(t-s\right)^{1-\mathfrak{b}}}ds\cdot q\left(z,w,t\right)\\
 & =\left(\mathfrak{c}t^{\mathfrak{b}}V_{J}\right)^{n+1}\left(\prod_{j=1}^{n+1}B\left(\mathfrak{b},j\mathfrak{b}\right)\right)q\left(z,w,t\right).
\end{split}
\]
Upon rewriting $\prod_{j=1}^{n}B\left(\mathfrak{b},j\mathfrak{b}\right)$
as $\frac{\left(\Gamma\left(\mathfrak{b}\right)\right)^{n+1}}{\Gamma\left(\left(n+1\right)\mathfrak{b}\right)}$,
we immediately obtain (\ref{eq:estimate for q_J,n})-(\ref{eq:exp estimate for q^V_J}).
Finally, (\ref{eq: duhamel integral eq}) can be verified by plugging
the series representation of $q_{J}^{V}\left(z,w,t\right)$ into the
right hand side of (\ref{eq: duhamel integral eq}) and integrating
term by term. 
\end{proof}
We are now ready to solve (\ref{eq:model equation with potential localized}). 
\begin{prop}
\label{prop:properties of q^V_J} Let $q_{J}^{V}\left(z,w,t\right)$
be defined as in (\ref{eq:def of q^V_J}). Then, $q_{J}^{V}\left(z,w,t\right)$
is continuous on $\left(0,J\right)^{2}\times\left(0,\infty\right)$,
and for every $\left(z,w,t\right)\in\left(0,J\right)^{2}\times\left(0,\infty\right)$,
we have that 

\begin{equation}
w^{1-\nu}q_{J}^{V}\left(z,w,t\right)=z^{1-\nu}q_{J}^{V}\left(w,z,t\right).\label{eq:symmetry of q^V_J}
\end{equation}
$q_{J}^{V}\left(z,w,t\right)$ also satisfies the following integral
equation:
\begin{equation}
q_{J}^{V}\left(z,w,t\right)=q_{J}\left(z,w,t\right)+\int_{0}^{t}\int_{0}^{\infty}q_{J}^{V}\left(z,\xi,t-s\right)q_{J}\left(\xi,w,s\right)V\left(\xi\right)d\xi ds.\label{eq: duhamel integral equiv}
\end{equation}

Moreover, $q_{J}^{V}\left(z,w,t\right)$ is the fundamental solution
to (\ref{eq:model equation with potential localized}). Given $h\in C_{c}\left(\left(0,J\right)\right)$,
\begin{equation}
v_{h,\left(0,J\right)}^{V}\left(z,t\right):=\int_{0}^{J}q_{J}^{V}\left(z,w,t\right)h\left(w\right)dw\text{ for }\left(z,t\right)\in\left(0,J\right)\times\left(0,\infty\right)\label{eq:def of v^V_h,(0,J)}
\end{equation}
is a smooth solution to (\ref{eq:model equation with potential localized}). 
\end{prop}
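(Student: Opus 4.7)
The plan is to leverage the series representation $q_J^V = \sum_{n\ge 0} q_{J,n}$ from Lemma~\ref{lem:def of q^V} together with the properties of $q_J$ established in Proposition~\ref{prop:properties of q_J}, and treat each assertion in turn.

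For continuity, I would argue by induction that each $q_{J,n}$ is continuous on $(0,J)^2\times(0,\infty)$: the base case is Proposition~\ref{prop:properties of q_J}, and the inductive step follows by dominated convergence applied to the recursion (\ref{eq:recursion n->n+1 for q^V_J}), using the pointwise bound $|q_{J,n}|\le m_n(t)\,q$ along with local boundedness of $V$. Since both $q$ and $M(t)$ are bounded on every compact subset of $(0,J)^2\times(0,\infty)$, Lemma~\ref{lem:def of q^V} gives uniform convergence of the series on compacts, so the limit $q_J^V$ is continuous.

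For the symmetry (\ref{eq:symmetry of q^V_J}), the cleanest route is to iterate the recursion to obtain the explicit formula
\[
q_{J,n}(z,w,t)=\idotsint q_J(z,\xi_1,t-s_1)\cdots q_J(\xi_n,w,s_n)\,V(\xi_1)\cdots V(\xi_n)\,d\vec\xi\,d\vec s
\]
over the simplex $0<s_n<\cdots<s_1<t$. Multiplying by $w^{1-\nu}$ and repeatedly applying the identity $w^{1-\nu}q_J(z,w,t)=z^{1-\nu}q_J(w,z,t)$ from Proposition~\ref{prop:properties of q_J} to flip every factor produces a telescoping chain of $\xi_i^{1-\nu}/\xi_{i+1}^{1-\nu}$ ratios that cancel, leaving only $z^{1-\nu}/w^{1-\nu}$ outside the integral. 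The substitution $\xi_k'=\xi_{n-k+1}$, $s_k'=t-s_{n-k+1}$ then restores the correct ordering and identifies the result with $z^{1-\nu}q_{J,n}(w,z,t)$. Summing over $n$ yields (\ref{eq:symmetry of q^V_J}). The alternative integral equation (\ref{eq: duhamel integral equiv}) then follows by plugging the symmetry into the Duhamel identity of Lemma~\ref{lem:def of q^V}: multiplying by $w^{1-\nu}$, swapping $(z,w)$, using the symmetry of $q_J$ to convert $q_J(z,\xi,t-s)$ into $(\xi/z)^{1-\nu}q_J(\xi,z,t-s)$, and substituting $s\mapsto t-s$ reshuffles the convolution into the desired order.

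For the fundamental-solution property, I would decompose $v^V_{h,(0,J)}=\sum_n v_n$ where $v_n(z,t):=\int_0^J q_{J,n}(z,w,t)h(w)\,dw$. By Proposition~\ref{prop:properties of q_J}, $v_0=v_{h,(0,J)}$ is a smooth solution of the unperturbed model equation (\ref{eq:model equation localized}). For $n\ge 1$ the convolution $v_n(z,t)=\int_0^t\!\!\int_0^J q_J(z,\xi,t-s)V(\xi)v_{n-1}(\xi,s)\,d\xi\,ds$ is a standard Duhamel integral against the smooth fundamental solution $q_J$ of $\partial_t-L_0$, so $(\partial_t-L_0)v_n=V v_{n-1}$ in the classical sense on $(0,J)\times(0,\infty)$, with vanishing initial trace and Dirichlet data at $0$ and $J$. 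Termwise summation formally yields $(\partial_t-L_0-V)v^V_{h,(0,J)}=0$. The initial condition $v^V_{h,(0,J)}(z,t)\to h(z)$ is immediate since $v_0\to h$ and $|v_n|\le\|h\|_u\,m_n(t)\cdot O(1)=O(t^{n\mathfrak b})$ for $n\ge 1$; the Dirichlet conditions descend from those of $q_J$ via the uniform convergence of the series.

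The principal obstacle is justifying the term-by-term differentiation: Lemma~\ref{lem:def of q^V} controls only $|q_{J,n}|$, not its derivatives. To close this, I would establish analogous iterative bounds on $\partial_t v_n$ and $L_0 v_n$ on each compact subset of $(0,J)\times(0,\infty)$, where $V$ is bounded, by bootstrapping off the smoothness and derivative estimates for $q_J$ from Proposition~\ref{prop:properties of q_J} and using the recursion $(\partial_t-L_0)v_n=V v_{n-1}$. Interior parabolic regularity of $\partial_t-L_0$ then upgrades the distributional identity to a classical one. The boundary singularity of $V$ at $z=0$ (when $\alpha\in(0,1)$) never enters these interior estimates and is already absorbed in the bounds of Lemma~\ref{lem:def of q^V}.
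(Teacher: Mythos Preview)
Your treatment of the symmetry and of the alternate Duhamel identity is essentially the paper's argument in different clothing: the paper introduces a second recursion $\tilde q_{J,n}$ obtained by convolving on the right rather than the left, observes that both recursions unwind to the same $2n$-fold integral, and then runs exactly your telescoping induction; deriving (\ref{eq: duhamel integral equiv}) from the symmetry, as you do, is equivalent to the paper's use of $\tilde q_{J,n}=q_{J,n}$. Your continuity argument via termwise continuity plus local uniform convergence is also fine, though the paper instead reads continuity in $(z,t)$ off the integral equation (\ref{eq: duhamel integral eq}) and then uses the dual equation (\ref{eq: duhamel integral equiv}) to get equicontinuity in $w$.

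The one place where your route is genuinely harder is the smoothness/PDE step. You propose to bound derivatives of each $v_n$ by bootstrapping from $(\partial_t-L_0)v_n=V v_{n-1}$ and then sum termwise; you correctly flag this as the principal obstacle, since Lemma~\ref{lem:def of q^V} gives no derivative control. The paper sidesteps this entirely: it pairs $v_{h,(0,J)}^{V}$ against a test function, differentiates the pairing in $t$ using only that $v_{h,(0,J)}$ and $q_J(\cdot,u,t-s)$ already solve $\partial_t=L_0$ classically, and obtains $\frac{d}{dt}\langle\varphi,v^V\rangle=\langle(L_0^*+V)\varphi,v^V\rangle$ directly. Hypoellipticity of $\partial_t-L^V$ then upgrades this distributional identity to smoothness in one stroke, with no need for derivative estimates on the series. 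Your approach would work, but the paper's is considerably cleaner and avoids the bootstrap you anticipated.
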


\begin{proof}
To prove (\ref{eq:symmetry of q^V_J}), we first note that if, for
$\left(z,w,t\right)\in\left(0,J\right)^{2}\times\left(0,\infty\right)$
and $n\in\mathbb{N}$, we define
\begin{equation}
\tilde{q}_{J,0}\left(z,w,t\right):=q_{J}\left(z,w,t\right)\text{ and }\tilde{q}_{J,n+1}\left(z,w,t\right):=\int_{0}^{t}\int_{0}^{J}\tilde{q}_{J,n}\left(z,\xi,t-s\right)q_{J}\left(\xi,w,s\right)V\left(\xi\right)d\xi ds,\label{eq:alternative of n->n+1}
\end{equation}
then $\tilde{q}_{J,n}\left(z,w,t\right)=q_{J,n}\left(z,w,t\right)$.
In other words, (\ref{eq:alternative of n->n+1}) is an equivalent
recursive relation to (\ref{eq:recursion n->n+1 for q^V_J}). To see
this, one can expand both the right hand side of (\ref{eq:recursion n->n+1 for q^V_J})
and that of (\ref{eq:alternative of n->n+1}) into two respective
$2n-$fold integrals, and confirm that the two integrals are identical.
Next, we will show by induction that for every $\left(z,w,t\right)\in\left(0,J\right)^{2}\times\left(0,\infty\right)$
and $n\in\mathbb{N}$,
\[
w^{1-\nu}q_{J,n}\left(z,w,t\right)=z^{1-\nu}q_{J,n}\left(w,z,t\right).
\]
When $n=0$, this relation is simply (\ref{eq:symmetry for q_J}).
Assume that this relation holds up to some $n\in\mathbb{N}$. By (\ref{eq:symmetry for q_J})
and the equivalence between (\ref{eq:recursion n->n+1 for q^V_J})
and (\ref{eq:alternative of n->n+1}), we have that
\[
\begin{split}w^{1-\nu}q_{J,n+1}\left(z,w,t\right) & =z^{1-\nu}\int_{0}^{t}\int_{0}^{J}q_{J}\left(\xi,z,t-s\right)q_{J,n}\left(w,\xi,s\right)V\left(\xi\right)d\xi ds\\
 & =z^{1-\nu}\int_{0}^{t}\int_{0}^{J}\tilde{q}_{J,n}\left(w,\xi,s\right)q_{J}\left(\xi,z,t-s\right)V\left(\xi\right)d\xi ds\\
 & =z^{1-\nu}\tilde{q}_{J,n+1}\left(w,z,t\right)=z^{1-\nu}q_{J,n+1}\left(w,z,t\right).
\end{split}
\]
(\ref{eq:symmetry of q^V_J}) follows immediately. To establish (\ref{eq: duhamel integral equiv}),
we write its right hand side as
\[
\begin{split} & q_{J}\left(z,w,t\right)+\int_{0}^{t}\int_{0}^{\infty}q_{J}^{V}\left(z,\xi,t-s\right)q_{J}\left(\xi,w,s\right)V\left(\xi\right)d\xi ds\\
= & q_{J}\left(z,w,t\right)+\sum_{n=0}^{\infty}\int_{0}^{t}\int_{0}^{\infty}\tilde{q}_{J,n}\left(z,\xi,t-s\right)q_{J}\left(\xi,w,s\right)V\left(\xi\right)d\xi ds\\
= & q_{J}\left(z,w,t\right)+\sum_{n=0}^{\infty}\tilde{q}_{J,n+1}\left(z,w,t\right)\\
= & q_{J}^{V}\left(z,w,t\right),
\end{split}
\]
where, again, we used the equivalence between (\ref{eq:recursion n->n+1 for q^V_J})
and (\ref{eq:alternative of n->n+1}). By (\ref{eq: duhamel integral eq})
and (\ref{eq:exp estimate for q^V_J}), $\left(z,t\right)\mapsto q_{J}^{V}\left(z,w,t\right)$
is continuous for every $w\in\left(0,J\right)$, and by (\ref{eq: duhamel integral equiv}),
$w\mapsto q_{J}^{V}\left(z,w,t\right)$ is equicontinuous in $\left(z,t\right)$
from any bounded subset of $\left(0,J\right)\times\left(0,\infty\right)$.
From here one can easily derives the continuity of $q_{J}^{V}\left(z,w,t\right)$
in $\left(z,w,t\right)$ on $\left(0,J\right)^{2}\times\left(0,\infty\right)$.

Given $h\in C_{c}\left(\left(0,J\right)\right)$, for every $\left(z,t\right)\in\left(0,J\right)\times\left(0,\infty\right)$,
let $v_{h,\left(0,J\right)}^{V}\left(z,t\right)$ and $v_{h,\left(0,J\right)}\left(z,t\right)$
be defined as in (\ref{eq:def of v^V_h,(0,J)}) and (\ref{eq:def of v_g,(0,J)})
respectively. It follows from (\ref{eq: duhamel integral eq}) that
\begin{equation}
\begin{split}v_{h,\left(0,J\right)}^{V}\left(z,t\right) & =v_{h,\left(0,J\right)}\left(z,t\right)+\end{split}
\int_{0}^{t}\int_{0}^{J}q_{J}\left(z,\xi,t-s\right)v_{h,\left(0,J\right)}^{V}\left(\xi,s\right)V\left(\xi\right)d\xi ds.\label{eq:relation between v_h  and v_h^V}
\end{equation}
Let $\mathfrak{b}$ and $\mathfrak{c}$ be as in (\ref{eq:def of mathfrak(b)})
and (\ref{eq:def of mathfrak(c)}) respectively. By (\ref{eq:exp estimate for q^V_J})
and (\ref{eq: CK inequality}), we have that
\[
\begin{split}\left|v_{h,\left(0,J\right)}^{V}\left(z,t\right)-v_{h,\left(0,J\right)}\left(z,t\right)\right| & =\left|\int_{0}^{t}\int_{0}^{J}q_{J}\left(z,\xi,t-s\right)v_{h,\left(0,J\right)}^{V}\left(\xi,s\right)V\left(\xi\right)d\xi ds\right|\\
 & \leq\left\Vert h\right\Vert _{u}\int_{0}^{J}\int_{0}^{t}\int_{0}^{J}q_{J}\left(z,\xi,t-s\right)\left|q_{J}^{V}\left(\xi,u,s\right)\right|\left|V\left(\xi\right)\right|d\xi dsdu\\
 & \leq\left\Vert h\right\Vert _{u}M\left(t\right)\int_{0}^{J}\int_{0}^{t}\int_{0}^{J}q_{J}\left(z,\xi,t-s\right)q\left(\xi,u,s\right)\left|V\left(\xi\right)\right|d\xi dsdu\\
 & \leq\left\Vert h\right\Vert _{u}M\left(t\right)\mathfrak{c}t^{\mathfrak{b}}V_{J}B\left(\mathfrak{b},\mathfrak{b}\right)\int_{0}^{J}q\left(z,u,t\right)du.
\end{split}
\]
Since $v_{h,\left(0,J\right)}\left(z,t\right)$ is a solution to (\ref{eq:model equation localized}),
the second last inequality implies that
\[
\lim_{z\searrow0}v_{h,\left(0,J\right)}^{V}\left(z,t\right)=\lim_{z\nearrow J}v_{h,\left(0,J\right)}^{V}\left(z,t\right)=0,
\]
and the last inequality leads to $\lim_{t\searrow0}v_{h,\left(0,J\right)}^{V}\left(z,t\right)=h\left(z\right)$.

The only thing that remains to be proven is that $v_{h,\left(0,J\right)}^{V}\left(z,t\right)$
is a smooth solution to the equation in (\ref{eq:model equation with potential localized}),
which, by the hypoellipticity of the operator $\partial_{t}-L^{V}$,
can be reduced to showing that $v_{h,\left(0,J\right)}^{V}\left(z,t\right)$
is a solution in the sense of distribution. We take $\varphi\in C_{c}^{\infty}\left(\left(0,J\right)\right)$
and consider 
\[
\left\langle \varphi,v_{h,\left(0,J\right)}^{V}\left(\cdot,t\right)\right\rangle :=\int_{0}^{J}v_{h,\left(0,J\right)}^{V}\left(z,t\right)\varphi\left(z\right)dz\text{ for }t\geq0,
\]
and use (\ref{eq:relation between v_h  and v_h^V}) to write it as
\[
\left\langle \varphi,v_{h,\left(0,J\right)}^{V}\left(\cdot,t\right)\right\rangle =\left\langle \varphi,v_{h,\left(0,J\right)}\left(\cdot,t\right)\right\rangle +\int_{0}^{t}\int_{0}^{J}\left\langle \varphi,q_{J}\left(\cdot,u,t-s\right)\right\rangle v_{h,\left(0,J\right)}^{V}\left(u,s\right)V\left(u\right)duds.
\]
Therefore,  
\[
\begin{split}\frac{d}{dt}\left\langle \varphi,v_{h,\left(0,J\right)}^{V}\left(\cdot,t\right)\right\rangle  & =\frac{d}{dt}\left\langle \varphi,v_{h,\left(0,J\right)}\left(\cdot,t\right)\right\rangle +\left\langle V\varphi,v_{h,\left(0,J\right)}^{V}\left(\cdot,t\right)\right\rangle \\
 & \qquad\qquad+\int_{0}^{t}\int_{0}^{J}\frac{d}{dt}\left\langle \varphi,q_{J}\left(\cdot,u,t-s\right)\right\rangle v_{h,\left(0,J\right)}^{V}\left(u,s\right)V\left(u\right)duds\\
 & =\left\langle L_{0}^{*}\varphi,v_{h,\left(0,J\right)}\left(\cdot,t\right)\right\rangle +\left\langle V\varphi,v_{h,\left(0,J\right)}^{V}\left(\cdot,t\right)\right\rangle \\
 & \qquad\qquad+\int_{0}^{t}\int_{0}^{J}\left\langle L_{0}^{*}\varphi,q_{J}\left(\cdot,u,t-s\right)\right\rangle v_{h,\left(0,J\right)}^{V}\left(u,s\right)V\left(u\right)duds\\
 & =\left\langle \left(L_{0}^{*}+V\right)\varphi,v_{h,\left(0,J\right)}^{V}\left(\cdot,t\right)\right\rangle .
\end{split}
\]
\end{proof}

\subsection{Approximation of $q_{J}^{V}\left(z,w,t\right)$}

In general we do not expect to find a closed-form formula for $q_{J}^{V}\left(z,w,t\right)$,
but when $t$ is sufficiently small, the above construction does provide
accurate approximations for $q_{J}^{V}\left(z,w,t\right)$ whose exact
formulas are explicit or even in closed forms. Intuitively speaking,
when $t$ is small, the effect of the potential $V\left(z\right)$
in $L^{V}$ has not become ``substantial'' so that $L^{V}$ is close
to $L_{0}$, and hence it is natural to expect that $q_{J}^{V}\left(z,w,t\right)$
is close to $q_{J}\left(z,w,t\right)$ which, as we have seen in Corollary
\ref{cor:estimate on ratio q_J/q}, is well approximated by $q\left(z,w,t\right)$
for sufficiently small $t$. To make it rigorous, we take $t_{J}$
to be the same as in Corollary \ref{cor:estimate on ratio q_J/q}
and use (\ref{eq: estimate on ratio q_J/q}) and (\ref{eq:estimate for q_J,n})
to derive that for every $t\in\left(0,t_{J}\right)$, 
\begin{align*}
\sup_{z,w\in\left(0,\frac{1}{9}J\right)^{2}}\left|\frac{q_{J}^{V}\left(z,w,t\right)}{q\left(z,w,t\right)}-1\right| & \leq\sup_{z,w\in\left(0,\frac{1}{9}J\right)^{2}}\left(\left|\frac{q_{J}^{V}\left(z,w,t\right)-q_{J}\left(z,w,t\right)}{q\left(z,w,t\right)}\right|+\left|\frac{q_{J}\left(z,w,t\right)}{q\left(z,w,t\right)}-1\right|\right)\\
 & \leq M\left(t\right)-1+\exp\left(-\frac{2J}{9t}\right).
\end{align*}
Hence, for some constant $C>0$ uniformly in $t\in\left(0,t_{J}\right)$
($C$ may depend on $J$ and $\alpha$), 
\begin{equation}
\sup_{z,w\in\left(0,\frac{1}{9}J\right)^{2}}\left|\frac{q_{J}^{V}\left(z,w,t\right)}{q\left(z,w,t\right)}-1\right|\le Ct^{\mathfrak{b}}.\label{eq:estimate for q^V_J/q}
\end{equation}

(\ref{eq:estimate for q^V_J/q}) confirms that when $t$ is small,
$q_{J}^{V}\left(z,w,t\right)$ is indeed well approximated by $q\left(z,w,t\right)$.
However, viewing from (\ref{eq:def of q^V_J}), $q\left(z,w,t\right)$
is only the ``first order'' approximation to $q_{J}^{V}\left(z,w,t\right)$,
since the error $t^{\mathfrak{b}}$ in (\ref{eq:estimate for q^V_J/q})
is generated by keeping only the first term in the series in (\ref{eq:def of q^V_J}).
It is possible to derive a more general ``$k-$th order'' approximation
for $q_{J}^{V}\left(z,w,t\right)$ with $k\in\mathbb{N}$, and obtain
an analog of (\ref{eq:estimate for q^V_J/q}) with the error being
$t^{k\mathfrak{b}}$. To achieve this purpose, we introduce a new
sequence of functions. For $\left(z,w,t\right)\in\left(0,\infty\right)^{3}$
and $n\in\mathbb{N}$, we set
\begin{equation}
q_{0}\left(z,w,t\right):=q\left(z,w,t\right)\text{ and }q_{n}\left(z,w,t\right):=\int_{0}^{t}\int_{0}^{\infty}q\left(z,\xi,t-s\right)V\left(\xi\right)q_{n}\left(\xi,w,s\right)d\xi ds,\label{eq: recursion n->n+1 for q^V}
\end{equation}
where, again, we assume that $V\left(z\right)\equiv0$ for $z>J$.
By following the proof of (\ref{eq:estimate for q_J,n}) line by line
with $q_{J,n}\left(z,w,t\right)$ replaced by $q_{n}\left(z,w,t\right)$,
we also get that for every $\left(z,w,t\right)\in\left(0,\infty\right)^{3}$
and $n\in\mathbb{N}$,
\begin{equation}
\left|q_{n}\left(z,w,t\right)\right|\leq m_{n}\left(t\right)q\left(z,w,t\right).\label{eq:estimate for q_n}
\end{equation}
Clearly, $q_{n}\left(z,w,t\right)$ is the ``global'' counterpart
of $q_{J,n}\left(z,w,t\right)$, and we will justify that $q_{J,n}\left(z,w,t\right)$
is close to $q_{n}\left(z,w,t\right)$ when $t$ is sufficiently small.
\begin{lem}
For every $\left(z,w,t\right)\in\left(0,J\right)^{2}\times\left(0,\infty\right)$
and $n\in\mathbb{N}$, 
\begin{equation}
\left|q_{J,n}\left(z,w,t\right)-q_{n}\left(z,w,t\right)\right|\leq\left(2\mathfrak{c}t^{\mathfrak{b}}B\left(\mathfrak{b},\mathfrak{b}\right)V_{J}\right)^{n}r\left(z,w,t\right),\label{eq:estimate on q_J,n - q_n}
\end{equation}
where $r\left(z,w,t\right)$ is as in (\ref{eq:def of r(z,w,t)}).
\end{lem}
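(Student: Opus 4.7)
The plan is to induct on $n$. The base case $n=0$ is immediate: $q_{J,0}-q_0 = q_J - q = -r$, so $|q_{J,0}-q_0|=r(z,w,t)$. For the inductive step, set $\Delta_n := q_{J,n}-q_n$. Substituting $q_J = q-r$ into the recursion (\ref{eq:recursion n->n+1 for q^V_J}) and subtracting (\ref{eq: recursion n->n+1 for q^V}) (using that $V$ vanishes outside $(0,J)$ so the integration domains coincide) gives
\[
\Delta_{n+1}(z,w,t) = \int_0^t\!\!\int_0^J q(z,\xi,t-s)\Delta_n(\xi,w,s)V(\xi)\,d\xi\,ds \;-\; \int_0^t\!\!\int_0^J r(z,\xi,t-s)q_{J,n}(\xi,w,s)V(\xi)\,d\xi\,ds.
\]
Applying the inductive hypothesis to $\Delta_n$, the bound (\ref{eq:estimate for q_J,n}) to $q_{J,n}$, and $|V(\xi)|\leq V_J\xi^{\mathfrak{b}-1}$, the whole induction reduces to establishing that the two weighted CK-type integrals
\[
I_1 := \int_0^t\!\!\int_0^J r(z,\xi,t-s)q(\xi,w,s)\xi^{\mathfrak{b}-1}\,d\xi\,ds,\quad I_2 := \int_0^t\!\!\int_0^J q(z,\xi,t-s)r(\xi,w,s)\xi^{\mathfrak{b}-1}\,d\xi\,ds
\]
are each bounded by $\mathfrak{c}\,t^{\mathfrak{b}}B(\mathfrak{b},\mathfrak{b})\,r(z,w,t)$. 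The factor-of-$2$ slack in the target constant then absorbs the $m_n(t)$ prefactor inherited from (\ref{eq:estimate for q_J,n}) via $m_n(t)\leq(\mathfrak{c}\,t^{\mathfrak{b}}V_J B(\mathfrak{b},\mathfrak{b}))^n$ (which holds because $B(\mathfrak{b},j\mathfrak{b})$ is decreasing in $j$), and the induction closes exactly.

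For $I_1$, I plan to substitute the strong-Markov formula $r(z,\xi,t-s)=\mathbb{E}[q(J,\xi,t-s-\tau);\tau\leq(t-s)\wedge\zeta_0^Y(z)]$ with $\tau:=\zeta_J^Y(z)$, use Fubini to pull the expectation outside, and then enlarge the $\xi$-integration from $(0,J)$ to $(0,\infty)$ (the integrand being non-negative). The resulting inner integral fits (\ref{eq: CK inequality}) with time parameters $t-s-\tau$ and $s$; integrating in $s\in(0,t-\tau)$ produces $\mathfrak{c}(t-\tau)^{\mathfrak{b}}B(\mathfrak{b},\mathfrak{b})q(J,w,t-\tau)$, and taking the expectation back outside reconstructs $r(z,w,t)$.

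For $I_2$ the same route is blocked, because the representation of $r(\xi,w,s)$ is driven by a process starting at the \emph{varying} point $\xi$, leaving the $\xi^{\mathfrak{b}-1}$ weight stranded on the wrong kernel factor. The remedy is to symmetrize first: (\ref{eq:symmetry of q}) and its analog (\ref{eq:symmetry for q_J}) yield $w^{1-\nu}r(\xi,w,\cdot)=\xi^{1-\nu}r(w,\xi,\cdot)$, and together with $\xi^{1-\nu}q(z,\xi,\cdot)=z^{1-\nu}q(\xi,z,\cdot)$ this rewrites
\[
I_2 = \frac{z^{1-\nu}}{w^{1-\nu}}\int_0^t\!\!\int_0^J q(\xi,z,t-s)\,r(w,\xi,s)\,\xi^{\mathfrak{b}-1}\,d\xi\,ds.
\]
Now $r(w,\xi,s)$ is driven by a process starting at the fixed point $w$, so the argument for $I_1$ applies with the roles of $z$ and $w$ exchanged and produces $\mathfrak{c}\,t^{\mathfrak{b}}B(\mathfrak{b},\mathfrak{b})\,r(w,z,t)$; one final application of the symmetry of $r$ converts this to $\mathfrak{c}\,t^{\mathfrak{b}}B(\mathfrak{b},\mathfrak{b})\,r(z,w,t)$.

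The main obstacle I expect is spotting that $I_2$ must be symmetrized \emph{before} the probabilistic representation of $r$ is invoked; without this step, the $\xi^{\mathfrak{b}-1}$ weight ends up on the wrong kernel factor and (\ref{eq: CK inequality}) cannot be brought to bear. Everything else is a mechanical combination of Fubini, the weighted Chapman--Kolmogorov estimate, and the bookkeeping of constants.
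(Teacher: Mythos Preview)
Your proposal is correct and follows essentially the same route as the paper. The only cosmetic difference is the algebraic splitting: the paper writes $q_{n+1}-q_{J,n+1}=\int r\,V\,q_n+\int q_J\,V\,(q_n-q_{J,n})$, whereas you write $\Delta_{n+1}=\int q\,V\,\Delta_n-\int r\,V\,q_{J,n}$; both lead to one ``$I_1$--type'' term and one ``$I_2$--type'' term handled by the probabilistic representation of $r$, the symmetrization trick, the weighted Chapman--Kolmogorov inequality \eqref{eq: CK inequality}, and the same $(1+2^n)\le 2^{n+1}$ closing of the induction.
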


\begin{proof}
When $n=0$, (\ref{eq:estimate on q_J,n - q_n}) simply becomes (\ref{eq:def of r(z,w,t)}).
Assume that (\ref{eq:estimate on q_J,n - q_n}) holds up to some $n\geq0$.
Following (\ref{eq:recursion n->n+1 for q^V_J}) and (\ref{eq: recursion n->n+1 for q^V}),
we write 
\begin{equation}
\begin{split} & q_{n+1}\left(z,w,t\right)-q_{J,n+1}\left(z,w,t\right)\\
= & \int_{0}^{t}\int_{0}^{\infty}r\left(z,\xi,t-s\right)V\left(\xi\right)q_{n}\left(\xi,w,s\right)d\xi ds\\
 & \qquad\qquad+\int_{0}^{t}\int_{0}^{\infty}q_{J}\left(z,\xi,t-s\right)V\left(\xi\right)\left(q_{n}\left(\xi,w,s\right)-q_{J,n}\left(\xi,w,s\right)\right)d\xi ds.
\end{split}
\label{eq:difference between q_J,n and q_n}
\end{equation}
We use Fubini's theorem and (\ref{eq:def of r(z,w,t)}) to rewrite
the first term on the right hand side of (\ref{eq:difference between q_J,n and q_n})
as
\begin{equation}
\mathbb{E}\left[\int_{0}^{t-\zeta_{J}^{Y}\left(z\right)}\int_{0}^{\infty}q\left(J,\xi,t-s-\zeta_{J}^{Y}\left(z\right)\right)V\left(\xi\right)q_{n}\left(\xi,w,s\right)d\xi ds;\zeta_{J}^{Y}\left(z\right)\leq t\wedge\zeta_{0}^{Y}\left(z\right)\right],\label{eq:1st term in q_n+1 - q_J,n+1}
\end{equation}
which, by (\ref{eq:estimate for q_n}), is bounded by 
\[
\frac{\Gamma^{n+1}\left(\mathfrak{b}\right)\left(\mathfrak{c}V_{J}\right)^{n}}{\Gamma\left(\left(n+1\right)\mathfrak{b}\right)}\mathbb{E}\left[\int_{0}^{t-\zeta_{J}^{Y}\left(z\right)}\int_{0}^{\infty}q\left(J,\xi,t-s-\zeta_{J}^{Y}\left(z\right)\right)\left|V\left(\xi\right)\right|s^{n\mathfrak{b}}q\left(\xi,w,s\right)d\xi ds;\zeta_{J}^{Y}\left(z\right)\leq t\wedge\zeta_{0}^{Y}\left(z\right)\right].
\]
By (\ref{eq: CK inequality}) and the fact that 
\[
\frac{\Gamma^{n+1}\left(\mathfrak{b}\right)}{\Gamma\left(\left(n+1\right)\mathfrak{b}\right)}=\prod_{j=1}^{n}B\left(\mathfrak{b},j\mathfrak{b}\right)\leq B^{n}\left(\mathfrak{b},\mathfrak{b}\right),
\]
we can further bound (\ref{eq:1st term in q_n+1 - q_J,n+1}) from
above by 
\[
\begin{split} & \mathfrak{c}^{n}B^{n}\left(\mathfrak{b},\mathfrak{b}\right)V_{J}^{n+1}\mathbb{E}\left[\int_{0}^{t-\zeta_{J}^{Y}\left(z\right)}s^{n\mathfrak{b}}\int_{0}^{\infty}q\left(J,\xi,t-s-\zeta_{J}^{Y}\left(z\right)\right)\xi^{\mathfrak{b}-1}q\left(\xi,w,s\right)d\xi ds;\zeta_{J}^{Y}\left(z\right)\leq t\wedge\zeta_{0}^{Y}\left(z\right)\right]\\
\leq & \mathfrak{c}^{n+1}B^{n}\left(\mathfrak{b},\mathfrak{b}\right)V_{J}^{n+1}\mathbb{E}\left[q\left(J,w,t-\zeta_{J}^{Y}\left(z\right)\right)\int_{0}^{t-\zeta_{J}^{Y}\left(z\right)}\frac{\left(t-\zeta_{J}^{Y}\left(z\right)\right)^{1-\mathfrak{b}}s^{n\mathfrak{b}}}{\left(t-s-\zeta_{J}^{Y}\left(z\right)\right)^{1-\mathfrak{b}}s^{1-\mathfrak{b}}}ds;\zeta_{J}^{Y}\left(z\right)\leq t\wedge\zeta_{0}^{Y}\left(z\right)\right]\\
\leq & \left(\mathfrak{c}t^{\mathfrak{b}}B\left(\mathfrak{b},\mathfrak{b}\right)V_{J}\right)^{n+1}r\left(z,w,t\right).
\end{split}
\]
According to the inductive assumption, the second term on the right
hand side of (\ref{eq:difference between q_J,n and q_n}) is bounded
by 
\[
2^{n}\mathfrak{c}^{n}B^{n}\left(\mathfrak{b},\mathfrak{b}\right)V_{J}^{n+1}\int_{0}^{t}\int_{0}^{\infty}q\left(z,\xi,t-s\right)\xi^{\mathfrak{b}-1}s^{n\mathfrak{b}}r\left(\xi,w,s\right)d\xi ds,
\]
which, by (\ref{eq:symmetry of q}) and (\ref{eq:symmetry for q_J}),
is equal to 
\[
\frac{2^{n}\mathfrak{c}^{n}B^{n}\left(\mathfrak{b},\mathfrak{b}\right)V_{J}^{n+1}}{w^{1-\nu}z^{\nu-1}}\int_{0}^{t}\int_{0}^{\infty}q\left(\xi,z,t-s\right)\xi^{\mathfrak{b}-1}s^{n\mathfrak{b}}r\left(w,\xi,s\right)d\xi ds.
\]
We use Fubini's theorem again to rewrite the expression above as
\[
\begin{split} & \frac{2^{n}\mathfrak{c}^{n}B^{n}\left(\mathfrak{b},\mathfrak{b}\right)V_{J}^{n+1}}{w^{1-\nu}z^{\nu-1}}\mathbb{E}\left[\int_{\zeta_{J}^{Y}\left(w\right)}^{t}\int_{0}^{\infty}q\left(\xi,z,t-s\right)\xi^{\mathfrak{b}-1}s^{n\mathfrak{b}}q\left(J,\xi,s-\zeta_{J}^{Y}\left(w\right)\right)d\xi ds;\zeta_{J}^{Y}\left(w\right)\leq t\wedge\zeta_{0}^{Y}\left(w\right)\right]\\
\leq & \frac{2^{n}\mathfrak{c}^{n+1}B^{n}\left(\mathfrak{b},\mathfrak{b}\right)V_{J}^{n+1}}{w^{1-\nu}z^{\nu-1}}\mathbb{E}\left[q\left(J,z,t-\zeta_{J}^{Y}\left(w\right)\right)\int_{\zeta_{J}^{Y}\left(w\right)}^{t}\frac{\left(t-\zeta_{J}^{Y}\left(w\right)\right)^{1-\mathfrak{b}}s^{n\mathfrak{b}}}{\left(t-s\right)^{1-\mathfrak{b}}\left(s-\zeta_{J}^{Y}\left(w\right)\right)^{1-\mathfrak{b}}}ds;\zeta_{J}^{Y}\left(w\right)\leq t\wedge\zeta_{0}^{Y}\left(w\right)\right]\\
\leq & \frac{2^{n}\left(\mathfrak{c}t^{\mathfrak{b}}B\left(\mathfrak{b},\mathfrak{b}\right)V_{J}\right)^{n+1}}{w^{1-\nu}z^{\nu-1}}r\left(w,z,t\right)=2^{n}\left(\mathfrak{c}t^{\mathfrak{b}}B\left(\mathfrak{b},\mathfrak{b}\right)V_{J}\right)^{n+1}r\left(z,w,t\right).
\end{split}
\]
Thus, combining the estimates of the two terms on the right hand side
of (\ref{eq:difference between q_J,n and q_n}), we obtain that
\[
\begin{split}\left|q_{J,n+1}\left(z,w,t\right)-q_{n+1}\left(z,w,t\right)\right| & \leq\left(1+2^{n}\right)\left(\mathfrak{c}t^{\mathfrak{b}}B\left(\mathfrak{b},\mathfrak{b}\right)V_{J}\right)^{n+1}r\left(z,w,t\right)\\
 & \leq\left(2\mathfrak{c}t^{\mathfrak{b}}B\left(\mathfrak{b},\mathfrak{b}\right)V_{J}\right)^{n+1}r\left(z,w,t\right).
\end{split}
\]
\end{proof}
\begin{prop}
\label{prop: approximation for q^V_J}Let $t_{J}:=\frac{4J}{9\left(2-\nu\right)}$.
Then, for every $t\in\left(0,t_{J}\right)$ and $k\in\mathbb{N}\backslash\left\{ 0\right\} $,
\begin{equation}
\sup_{z,w\in\left(0,\frac{1}{9}J\right)^{2}}\left|\frac{q_{J}^{V}\left(z,w,t\right)-\sum_{n=0}^{k-1}q_{n}\left(z,w,t\right)}{q\left(z,w,t\right)}\right|\leq m_{k}\left(t\right)M\left(t\right)+D_{k}\left(t\right)\exp\left(-\frac{2J}{9t}\right),\label{eq: estimate of q^V/q}
\end{equation}
where
\begin{equation}
D_{k}\left(t\right):=\sum_{n=0}^{k-1}\left(2\mathfrak{c}t^{\mathfrak{b}}B\left(\mathfrak{b},\mathfrak{b}\right)V_{J}\right)^{n}.\label{eq:def of D_k}
\end{equation}
In particular, there exists $C>0$ uniformly in $t\in\left(0,t_{J}\right)$
and $k\in\mathbb{N}\backslash\left\{ 0\right\} $ ($C$ may depend
on $J$ and $\nu$) such that
\begin{equation}
\sup_{z,w\in\left(0,\frac{1}{9}J\right)^{2}}\left|\frac{q_{J}^{V}\left(z,w,t\right)-\sum_{n=0}^{k-1}q_{n}\left(z,w,t\right)}{q\left(z,w,t\right)}\right|\leq Ct^{k\mathfrak{b}}.\label{eq:estimate for q^V -kth order}
\end{equation}
\end{prop}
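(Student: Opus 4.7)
The plan is to decompose
\[
q_{J}^{V}-\sum_{n=0}^{k-1}q_{n}=\Bigl(q_{J}^{V}-\sum_{n=0}^{k-1}q_{J,n}\Bigr)+\sum_{n=0}^{k-1}\bigl(q_{J,n}-q_{n}\bigr)
\]
and estimate each piece using tools already in place. For the first piece, define
\[
A(f)(z,w,t):=\int_{0}^{t}\!\int_{0}^{J}q_{J}(z,\xi,t-s)\,V(\xi)\,f(\xi,w,s)\,d\xi\,ds.
\]
Then (\ref{eq: duhamel integral eq}) reads $q_{J}^{V}=q_{J,0}+A(q_{J}^{V})$ while (\ref{eq:recursion n->n+1 for q^V_J}) gives $q_{J,n}=A^{n}(q_{J,0})$, so iterating $k$ times yields $q_{J}^{V}-\sum_{n=0}^{k-1}q_{J,n}=A^{k}(q_{J}^{V})$. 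Inside the resulting $k$-fold integral I would bound the innermost factor via (\ref{eq:exp estimate for q^V_J}), writing $|q_{J}^{V}(\xi_{k},w,s_{k})|\le M(s_{k})q(\xi_{k},w,s_{k})\le M(t)q(\xi_{k},w,s_{k})$, dominate every remaining $q_{J}$ by $q$ (using $q_{J}\le q$ from (\ref{eq:relation between q and q_J})), and run exactly the same Beta-function telescoping (iterated use of (\ref{eq: CK inequality})) that produced (\ref{eq:estimate for q_J,n}) in Lemma~\ref{lem:def of q^V}. This gives $|A^{k}(q_{J}^{V})(z,w,t)|\le M(t)\,m_{k}(t)\,q(z,w,t)$, which accounts for the first summand in (\ref{eq: estimate of q^V/q}).

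For the second piece, summing (\ref{eq:estimate on q_J,n - q_n}) over $n=0,\dots,k-1$ yields
\[
\sum_{n=0}^{k-1}\bigl|q_{J,n}(z,w,t)-q_{n}(z,w,t)\bigr|\le D_{k}(t)\,r(z,w,t).
\]
For $(z,w)\in(0,\tfrac{1}{9}J)^{2}$ and $t\in(0,t_{J})$, the calculation in the proof of Corollary~\ref{cor:estimate on ratio q_J/q} shows $r(z,w,t)/q(z,w,t)\le\exp(-2J/(9t))$, which produces the second summand in (\ref{eq: estimate of q^V/q}). Combining the two pieces gives the bound (\ref{eq: estimate of q^V/q}).

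For (\ref{eq:estimate for q^V -kth order}), Stirling's formula shows that the prefactor $\Gamma^{k+1}(\mathfrak{b})(\mathfrak{c}V_{J})^{k}/\Gamma((k+1)\mathfrak{b})$ is bounded in $k$, hence $m_{k}(t)\le C_{1}t^{k\mathfrak{b}}$ uniformly, and $M(t)\le M(t_{J})$ on $(0,t_{J})$, so the first summand is $O(t^{k\mathfrak{b}})$. Since $\exp(-2J/(9t))$ vanishes to infinite order as $t\searrow 0$, one has $\exp(-2J/(9t))\le C\,t^{k\mathfrak{b}}$ on $(0,t_{J})$, and $D_{k}(t)$ is bounded on the same interval, so the remaining summand is absorbed into $Ct^{k\mathfrak{b}}$ as well. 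The main obstacle I anticipate is the bookkeeping inside $A^{k}(q_{J}^{V})$: each application of $A$ must produce the correct Beta-integral from (\ref{eq: CK inequality}) so that the $k$-fold bound telescopes exactly into $m_{k}(t)\,q$, and one must check that replacing the innermost $q_{J}^{V}$ factor by $M(t)\,q$ is compatible with this telescoping without loss of the sharp combinatorial constant.
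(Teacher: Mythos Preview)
Your proof is correct and reaches the same bound as the paper, but the two arguments handle the first piece differently. The paper does not write the remainder as $A^{k}(q_{J}^{V})$; instead it uses the already-established termwise bounds $|q_{J,n}|\le m_{n}(t)q$ from Lemma~\ref{lem:def of q^V} and then proves the tail inequality
\[
\sum_{n=k}^{\infty}m_{n}(t)\le m_{k}(t)M(t)
\]
directly, via the Beta-function identity $\Gamma((l+1)\mathfrak{b}+k\mathfrak{b})^{-1}=\Gamma(k\mathfrak{b})^{-1}\Gamma((l+1)\mathfrak{b})^{-1}B((l+1)\mathfrak{b},k\mathfrak{b})$ together with $B((l+1)\mathfrak{b},k\mathfrak{b})\le B(\mathfrak{b},k\mathfrak{b})$. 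Your route re-runs the iteration of Lemma~\ref{lem:def of q^V} with the seed $|q_{J}^{V}|\le M(t)q$ in place of $q$, and the telescoping $\prod_{j=1}^{k}B(j\mathfrak{b},\mathfrak{b})=\Gamma^{k+1}(\mathfrak{b})/\Gamma((k+1)\mathfrak{b})$ delivers exactly $M(t)m_{k}(t)q$, so the obstacle you anticipate is not a real one: it is literally the same computation as in the proof of (\ref{eq:estimate for q_J,n}), only with an extra constant factor $M(t)$ carried along. For the second piece and for (\ref{eq:estimate for q^V -kth order}) your argument matches the paper's (the paper simply declares (\ref{eq:estimate for q^V -kth order}) ``trivial'' from (\ref{eq: estimate of q^V/q})).
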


\begin{proof}
Only (\ref{eq: estimate of q^V/q}) requires proof, since (\ref{eq:estimate for q^V -kth order})
follows from (\ref{eq: estimate of q^V/q}) trivially. By (\ref{eq:estimate for q_J,n})
and (\ref{eq:def of q^V_J}), we know that for every $\left(z,w,t\right)\in\left(0,J\right)^{2}\times\left(0,\infty\right)$
and $k\in\mathbb{N}\backslash\left\{ 0\right\} $
\[
\left|\frac{q_{J}^{V}\left(z,w,t\right)-\sum_{n=0}^{k-1}q_{J,n}\left(z,w,t\right)}{q\left(z,w,t\right)}\right|\leq\sum_{n=k}^{\infty}m_{n}\left(t\right)
\]
and we further derive that
\[
\begin{split}\sum_{n=k}^{\infty}m_{n}\left(t\right) & =\sum_{n=k}^{\infty}\frac{\Gamma^{n+1}\left(\mathfrak{b}\right)\left(\mathfrak{c}t^{\mathfrak{b}}V_{J}\right)^{n}}{\Gamma\left(\left(n+1\right)\mathfrak{b}\right)}\\
 & =\Gamma^{k}\left(\mathfrak{b}\right)\left(\mathfrak{c}t^{\mathfrak{b}}V_{J}\right)^{k}\sum_{l=0}^{\infty}\frac{\Gamma^{l+1}\left(\mathfrak{b}\right)\left(\mathfrak{c}t^{\mathfrak{b}}V_{J}\right)^{l}}{\Gamma\left(\left(l+1\right)\mathfrak{b}+k\mathfrak{b}\right)}\\
 & =\frac{\Gamma^{k}\left(\mathfrak{b}\right)\left(\mathfrak{c}t^{\mathfrak{b}}V_{J}\right)^{k}}{\Gamma\left(k\mathfrak{b}\right)}\sum_{l=0}^{\infty}\frac{\Gamma^{l+1}\left(\mathfrak{b}\right)\left(\mathfrak{c}t^{\mathfrak{b}}V_{J}\right)^{l}}{\Gamma\left(\left(l+1\right)\mathfrak{b}\right)}B\left(\left(l+1\right)\mathfrak{b},k\mathfrak{b}\right)\\
 & \leq\frac{\Gamma^{k+1}\left(\mathfrak{b}\right)\left(\mathfrak{c}t^{\mathfrak{b}}V_{J}\right)^{k}}{\Gamma\left(\left(k+1\right)\mathfrak{b}\right)}\sum_{l=0}^{\infty}\frac{\Gamma^{l+1}\left(\mathfrak{b}\right)\left(\mathfrak{c}t^{\mathfrak{b}}V_{J}\right)^{l}}{\Gamma\left(\left(l+1\right)\mathfrak{b}\right)}\\
 & =m_{k}\left(t\right)M\left(t\right),
\end{split}
\]
where we again used the fact that $B\left(\left(l+1\right)\mathfrak{b},k\mathfrak{b}\right)\leq B\left(\mathfrak{b},k\mathfrak{b}\right)$
for every $l\in\mathbb{N}$. Meanwhile, by (\ref{eq:estimate on q_J,n - q_n}),
we have that for every $\left(z,w,t\right)\in\left(0,J\right)^{2}\times\left(0,\infty\right)$,
\[
\left|\sum_{n=0}^{k-1}q_{J,n}\left(z,w,t\right)-\sum_{n=0}^{k-1}q_{n}\left(z,w,t\right)\right|\leq D_{k}\left(t\right)r\left(z,w,t\right),
\]
which, combined with (\ref{eq: estimate on ratio q_J/q}), leads to
(\ref{eq: estimate of q^V/q}).
\end{proof}

\subsection{From $q_{J}^{V}\left(z,w,t\right)$ to $p_{I}\left(z,w,t\right)$}

Now we are ready to return to the localized equation (\ref{eq:general IVP localized}).
Recall that $I>0$, $\phi\left(x\right)$ and $\theta\left(x\right)$
are functions on $\left(0,I\right)$ defined by (\ref{eq:def of phi =000026 theta}),
and $I$ and $J$ are related by $J=\phi\left(I\right)$; with $z\in\left(0,J\right)$,
$\psi\left(z\right)$ is the inverse function of $\phi$, $\tilde{\theta}\left(z\right)=\theta\left(\psi\left(z\right)\right)$
and $\Theta\left(z\right)$ is as defined in (\ref{eq:def of Theta}).
Guided by Proposition \ref{prop:change of variable}, we define 
\begin{equation}
p_{I}\left(x,y,t\right):=q_{J}^{V}\left(\phi\left(x\right),\phi\left(y\right),t\right)\frac{\Theta\left(\phi\left(x\right)\right)}{\Theta\left(\phi\left(y\right)\right)}\phi^{\prime}\left(y\right).\label{eq:def of p_I}
\end{equation}
for every $\left(x,y,t\right)\in\left(0,I\right)^{2}\times\left(0,\infty\right)$.
We immediately obtain several results on $p_{I}\left(x,y,t\right)$
based on Proposition \ref{prop:change of variable} and Proposition
\ref{prop:properties of q^V_J}. In addition, we can establish the
connection between $p_{I}\left(x,y,t\right)$ and $\left\{ X\left(x,t\right):t\geq0\right\} $
the unique solution to (\ref{eq:SDE of X}) and underlying diffusion
process corresponding to $L=x^{\alpha}a\left(x\right)\partial_{x}^{2}+b\left(x\right)\partial_{x}$.
\begin{prop}
\label{prop:properties of p_I}Let $p_{I}\left(x,y,t\right)$ be defined
as in (\ref{eq:def of p_I}). Then, $p_{I}\left(x,y,t\right)$ is
continuous on $\left(0,I\right)^{2}\times\left(0,\infty\right)$ and
\begin{equation}
\frac{\left(\phi\left(y\right)\right)^{1-\nu}}{\phi^{\prime}\left(y\right)}\Theta^{2}\left(\phi\left(y\right)\right)p_{I}\left(x,y,t\right)=\frac{\left(\phi\left(x\right)\right)^{1-\nu}}{\phi^{\prime}\left(x\right)}\Theta^{2}\left(\phi\left(x\right)\right)p_{I}\left(y,x,t\right)\label{eq:symmetry of p_I}
\end{equation}
for every $\left(x,y,t\right)\in\left(0,I\right)^{2}\times\left(0,\infty\right)$.

$p_{I}\left(x,y,t\right)$ is the fundamental solution to (\ref{eq:general IVP localized}).
Given $f\in C_{c}\left(\left(0,I\right)\right)$,
\begin{equation}
u_{f,\left(0,I\right)}\left(x,t\right):=\int_{0}^{I}f\left(y\right)p_{I}\left(x,y,t\right)dy\label{eq:def of u_f,(0,I)}
\end{equation}
is the unique solution in $C^{2,1}\left(\left(0,I\right)\times\left(0,\infty\right)\right)$
to (\ref{eq:general IVP localized}), and $u_{f,\left(0,I\right)}\left(x,t\right)$
is smooth on $\left(0,I\right)\times\left(0,\infty\right)$. Moreover,
\begin{equation}
u_{f,\left(0,I\right)}\left(x,t\right)=\mathbb{E}\left[f\left(X\left(x,t\right)\right);t<\zeta_{0,I}^{X}\left(x\right)\right],\label{eq: prob interp. of u_f,(0,I)}
\end{equation}
and hence for every Borel set $\Gamma\subseteq\left(0,I\right)$,
\begin{equation}
\int_{\Gamma}p_{I}\left(x,y,t\right)dy=\mathbb{P}\left(X\left(x,t\right)\in\Gamma,t<\zeta_{0,I}^{X}\left(x\right)\right).\label{eq: prob interp. of p_I}
\end{equation}

Finally, $p_{I}\left(x,y,t\right)$ satisfies the Chapman-Kolmogorov
equation, i.e., for every $x,y\in\left(0,I\right)$ and $t,s>0$,
\begin{equation}
p_{I}\left(x,y,t+s\right)=\int_{0}^{I}p_{I}\left(x,\xi,t\right)p_{I}\left(\xi,y,s\right)d\xi.\label{eq:CK equation for p_I}
\end{equation}
\end{prop}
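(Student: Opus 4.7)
The plan is to transfer every property of $q_{J}^{V}\left(z,w,t\right)$ established in Proposition \ref{prop:properties of q^V_J} back to $p_{I}\left(x,y,t\right)$ via the change of variables in Proposition \ref{prop:change of variable}, and then to install the probabilistic interpretation using the strong Markov property of $\left\{X\left(x,t\right):t\geq0\right\}$ together with It\^o's formula.

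First, I would dispense with the purely transformation-based claims. The maps $\phi:(0,I)\to(0,J)$ and $\Theta:(0,J)\to(0,\infty)$ are smooth, strictly positive, and bounded above and below on $(0,J)$ by Lemma \ref{lem:estimates on VJ}, so continuity of $p_{I}$ on $(0,I)^{2}\times(0,\infty)$ follows immediately from that of $q_{J}^{V}$. The symmetry relation \eqref{eq:symmetry of p_I} reduces to \eqref{eq:symmetry of q^V_J} by substituting $z=\phi(x)$, $w=\phi(y)$ and using $\phi^{\prime}(y)\,dy=dw$. For the PDE, Proposition \ref{prop:change of variable} shows that if $h(z):=f(\psi(z))/\Theta(z)$, then $v_{h,(0,J)}^{V}(z,t)=u_{f,(0,I)}(\psi(z),t)/\Theta(z)$, and by Proposition \ref{prop:properties of q^V_J}, $v_{h,(0,J)}^{V}(z,t)=\int_{0}^{J}q_{J}^{V}(z,w,t)h(w)\,dw$. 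Changing variables $w=\phi(y)$ in this integral and multiplying by $\Theta(\phi(x))$ produces exactly \eqref{eq:def of u_f,(0,I)}, and smoothness on $(0,I)\times(0,\infty)$ plus the initial/boundary conditions transfer directly.

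The main work is establishing the probabilistic identity \eqref{eq: prob interp. of u_f,(0,I)}, from which \eqref{eq: prob interp. of p_I}, uniqueness, and Chapman--Kolmogorov will all follow. I would apply the lemma at the end of $\mathsection1.3$ to $H(x,s):=u_{f,(0,I)}(x,t-s)$ on $\left[0,t\right]$: since $u_{f,(0,I)}$ solves $\partial_{t}u=Lu$ on $(0,I)\times(0,\infty)$, the process
\[
M\left(s\right):=u_{f,(0,I)}\left(X\left(x,s\wedge\tau_{n}\right),t-s\wedge\tau_{n}\right),\qquad s\in\left[0,t\right],
\]
is a local martingale once stopped at $\tau_{n}:=\zeta_{1/n,I-1/n}^{X}(x)$. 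Because $X(\cdot,\cdot)$ stays in $[1/n,I-1/n]$ on $[0,\tau_{n}]$ and $u_{f,(0,I)}$ is bounded there, $M$ is a bounded martingale, so $\mathbb{E}[M(t)]=u_{f,(0,I)}(x,t)$. Letting $n\nearrow\infty$, on the event $\{t<\zeta_{0,I}^{X}(x)\}$ we have $\tau_{n}\nearrow$ a value $>t$, so $M(t)\to f(X(x,t))$; on $\{t\geq\zeta_{0,I}^{X}(x)\}$ we have $\tau_{n}\nearrow\zeta_{0,I}^{X}(x)\leq t$ and $X(x,\tau_{n})$ converges to $0$ or $I$, so the continuity and boundary vanishing of $u_{f,(0,I)}$ forces $M(t)\to0$. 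Dominated convergence (justified by the uniform bound $\|f\|_{u}\Theta_{J}^{2}\max_{y}\phi'(y)\cdots$ on $p_{I}$ over compacts in time, or more simply by showing $u_{f,(0,I)}$ is bounded) gives \eqref{eq: prob interp. of u_f,(0,I)}, and \eqref{eq: prob interp. of p_I} follows by choosing $f$ to approximate $\mathbb{I}_{\Gamma}$.

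The hard part is the limiting argument at the boundary: showing $u_{f,(0,I)}(X(x,\tau_{n}),t-\tau_{n})\to 0$ on $\{t\geq\zeta_{0,I}^{X}(x)\}$ requires continuity of $u_{f,(0,I)}$ up to the spatial boundary uniformly in $s\in[0,t]$, which follows from the proof of Proposition \ref{prop:properties of q^V_J} (where the boundary vanishing was established pointwise but the argument there actually produces the required uniformity on compact time subintervals). Uniqueness of the solution in $C^{2,1}$ is then immediate from \eqref{eq: prob interp. of u_f,(0,I)}, since the right-hand side depends only on the data $(L,f)$ and not on the particular solution. Finally, \eqref{eq:CK equation for p_I} follows by applying the strong Markov property of $X$ at time $t$: for any Borel $\Gamma\subseteq(0,I)$,
\[
\int_{\Gamma}p_{I}\left(x,y,t+s\right)dy=\mathbb{P}\left(X\left(x,t+s\right)\in\Gamma,\;t+s<\zeta_{0,I}^{X}\left(x\right)\right)=\int_{0}^{I}p_{I}\left(x,\xi,t\right)\int_{\Gamma}p_{I}\left(\xi,y,s\right)dy\,d\xi,
\]
and arbitrariness of $\Gamma$ gives \eqref{eq:CK equation for p_I}.
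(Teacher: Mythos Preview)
Your proposal is correct and follows essentially the same route as the paper: transfer continuity, symmetry, and the PDE statement from $q_{J}^{V}$ to $p_{I}$ via the change of variables in Proposition~\ref{prop:change of variable}, then obtain \eqref{eq: prob interp. of u_f,(0,I)} by a martingale argument applied to $u_{f,(0,I)}(X(x,\cdot),t-\cdot)$, deduce uniqueness from that identity, and get Chapman--Kolmogorov from the strong Markov property. The only difference is cosmetic: the paper stops the process directly at $\zeta_{0,I}^{X}(x)$ and asserts the resulting process is a bounded martingale, whereas you localize with $\tau_{n}=\zeta_{1/n,I-1/n}^{X}(x)$ and pass to the limit---your version is slightly more careful about the boundary behaviour but leads to the same conclusion.
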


\begin{proof}
(\ref{eq:symmetry of p_I}) follows directly from (\ref{eq:symmetry of q^V_J}).
Given $f\in C_{c}\left(\left(0,I\right)\right)$, we set $h\left(z\right):=\frac{f\circ\psi\left(z\right)}{\Theta\left(z\right)}$
for $z\in\left(0,J\right)$. By (\ref{eq:def of v^V_h,(0,J)}), it
is straightforward to check that
\[
\begin{split}u_{f,\left(0,I\right)}\left(x,t\right) & =\Theta\left(\phi\left(x\right)\right)\int_{0}^{I}f\left(y\right)q_{J}^{V}\left(\phi\left(x\right),\phi\left(y\right),t\right)\frac{\phi^{\prime}\left(y\right)}{\Theta\left(\phi\left(y\right)\right)}dy\\
 & =\Theta\left(\phi\left(x\right)\right)\int_{0}^{J}f\left(\psi\left(w\right)\right)q_{J}^{V}\left(\phi\left(x\right),w,t\right)\frac{dw}{\Theta\left(w\right)}\\
 & =\Theta\left(\phi\left(x\right)\right)v_{h,\left(0,J\right)}^{V}\left(\phi\left(x\right),t\right),
\end{split}
\]
and hence it follows from Proposition \ref{prop:properties of q^V_J}
that $u_{f,\left(0,I\right)}\left(x,t\right)$ is a smooth solution
to (\ref{eq:general IVP localized}). Since
\[
\left\{ u_{f,\left(0,I\right)}\left(X\left(x,s\wedge\zeta_{0,I}^{X}\left(x\right)\right),t-s\wedge\zeta_{0,I}^{X}\left(x\right)\right):0\leq s\leq t\right\} 
\]
is a bounded martingale, by equating its expectation at $s=0$ and
$s=t$, we obtain (\ref{eq: prob interp. of u_f,(0,I)}), which further
leads to (\ref{eq: prob interp. of p_I}). Since $\left\{ X\left(x,t\right):t\geq0\right\} $
is the unique solution to (\ref{eq:SDE of X}), $u_{f,\left(0,I\right)}\left(x,t\right)$
is the unique $C^{2,1}\left(\left(0,I\right)\times\left(0,\infty\right)\right)$
solution to (\ref{eq:general IVP localized}). Finally, (\ref{eq:CK equation for p_I})
follows from (\ref{eq: prob interp. of p_I}) and the strong Markov
property of $\left\{ X\left(x,t\right):t\geq0\right\} $.
\end{proof}
\begin{rem}
Note that the properties developed above for $p_{I}\left(x,y,t\right)$
and $u_{f,\left(0,I\right)}\left(x,t\right)$ also lead to corresponding
results on $q_{J}^{V}\left(z,w,t\right)$ and $v_{h,\left(0,J\right)}^{V}\left(z,t\right)$.
For example, we see from (\ref{eq:CK equation for p_I}) that $q_{J}^{V}\left(z,w,t\right)$
also satisfies the Chapman-Kolmogorov equation, i.e., for every $z,w\in\left(0,J\right)$
and $t,s>0$,
\[
q_{J}^{V}\left(z,w,t+s\right)=\int_{0}^{J}q_{J}^{V}\left(z,\xi,t\right)q_{J}^{V}\left(\xi,w,s\right)d\xi,
\]
and the uniqueness of $u_{f,\left(0,I\right)}\left(z,t\right)$ implies
that, given $h\in C_{c}\left(\left(0,J\right)\right)$, $v_{h,\left(0,J\right)}^{V}\left(z,t\right)$
is the unique $C^{2,1}\left(\left(0,J\right)\times\left(0,\infty\right)\right)$
solution to (\ref{eq:model equation with potential localized}).
\end{rem}

The approximations we obtained in Proposition \ref{prop: approximation for q^V_J}
for $q_{J}^{V}\left(z,w,t\right)$ can also be ``transported'' to
$p_{I}\left(x,y,t\right)$ in a straightforward way. To see this,
we define, for $\left(x,y,t\right)\in\left(0,I\right)^{2}\times\left(0,\infty\right)$,
\begin{equation}
p^{approx.}\left(x,y,t\right):=q\left(\phi\left(x\right),\phi\left(y\right),t\right)\frac{\Theta\left(\phi\left(x\right)\right)}{\Theta\left(\phi\left(y\right)\right)}\phi^{\prime}\left(y\right),\label{eq:def of p^approx.}
\end{equation}
and more generally for $k\in\mathbb{N}\backslash\left\{ 0\right\} $,
\begin{equation}
p^{k-approx.}\left(x,y,t\right):=\sum_{n=0}^{k-1}q_{n}\left(\phi\left(x\right),\phi\left(y\right),t\right)\frac{\Theta\left(\phi\left(x\right)\right)}{\Theta\left(\phi\left(y\right)\right)}\phi^{\prime}\left(y\right).\label{eq:def of p^k-approx.}
\end{equation}
Then Proposition \ref{prop: approximation for q^V_J} can be rewritten
as follows.
\begin{cor}
\label{cor:approximation for p_I}There exists $t_{I}>0$ such that
for every $t\in\left(0,t_{I}\right)$ and $k\in\mathbb{N}\backslash\left\{ 0\right\} $,
\begin{equation}
\sup_{\left(x,y\right)\in\left(0,\psi\left(\frac{1}{9}\phi\left(I\right)\right)\right)^{2}}\left|\frac{p_{I}\left(x,y,t\right)-p^{k-approx.}\left(x,y,t\right)}{p^{approx.}\left(x,y,t\right)}\right|\leq m_{k}\left(t\right)M\left(t\right)+D_{k}\left(t\right)\exp\left(-\frac{2\phi\left(I\right)}{9t}\right)\label{eq: estimate of (p_I - p_k,approx.)/p_approx.}
\end{equation}
where $D_{k}\left(t\right)$ is as in (\ref{eq:def of D_k}). In particular,
\[
\begin{split}\sup_{\left(x,y\right)\in\left(0,\psi\left(\frac{1}{9}\phi\left(I\right)\right)\right)^{2}}\left|\frac{p_{I}\left(x,y,t\right)}{p^{approx.}\left(x,y,t\right)}-1\right| & =M\left(t\right)-1+D_{k}\left(t\right)\exp\left(-\frac{2\phi\left(I\right)}{9t}\right).\end{split}
\]
\end{cor}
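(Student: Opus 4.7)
The plan is to reduce everything to Proposition \ref{prop: approximation for q^V_J} by noting that the change of variables $x \mapsto \phi(x)$, $y \mapsto \phi(y)$ is a diffeomorphism, and that the ``gauge factor'' $\Theta(\phi(x))/\Theta(\phi(y))\cdot\phi'(y)$ is common to all three objects $p_I$, $p^{approx.}$ and $p^{k-approx.}$ and hence cancels in the ratio. Concretely, set $t_I := t_J = \frac{4\phi(I)}{9(2-\nu)}$. Since $\phi$ is a strictly increasing smooth bijection from $(0,I)$ onto $(0,J)$ with inverse $\psi$, the condition $x,y \in \bigl(0,\psi(\tfrac{1}{9}\phi(I))\bigr)$ is equivalent to $\phi(x),\phi(y)\in(0,\tfrac{1}{9}J)$.

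The key computation is algebraic. From (\ref{eq:def of p_I}), (\ref{eq:def of p^approx.}) and (\ref{eq:def of p^k-approx.}), the factor $\frac{\Theta(\phi(x))}{\Theta(\phi(y))}\phi'(y)$ appears in every numerator and in the denominator, so
\[
\frac{p_I(x,y,t)-p^{k-approx.}(x,y,t)}{p^{approx.}(x,y,t)} \;=\; \frac{q_J^V(\phi(x),\phi(y),t)-\sum_{n=0}^{k-1} q_n(\phi(x),\phi(y),t)}{q(\phi(x),\phi(y),t)}.
\]
Taking the supremum over $x,y\in(0,\psi(\tfrac19\phi(I)))$ and using the change of variables above turns it into the supremum over $z,w\in(0,\tfrac19 J)$ of the right-hand side. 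Proposition \ref{prop: approximation for q^V_J}, applied with $J=\phi(I)$ and $t\in(0,t_J)=(0,t_I)$, then yields (\ref{eq: estimate of (p_I - p_k,approx.)/p_approx.}) directly.

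For the ``particular case'', I would either take $k=1$ in (\ref{eq: estimate of (p_I - p_k,approx.)/p_approx.}) (noting $p^{1-approx.}=p^{approx.}$ since $q_0=q$, and $D_1(t)=1$), or, more transparently, split
\[
\left|\frac{q_J^V(z,w,t)}{q(z,w,t)}-1\right| \;\leq\; \left|\frac{q_J^V(z,w,t)-q_J(z,w,t)}{q(z,w,t)}\right| + \left|\frac{q_J(z,w,t)}{q(z,w,t)}-1\right|,
\]
bound the first term by $\sum_{n\geq 1} m_n(t)=M(t)-1$ using (\ref{eq:estimate for q_J,n}) and (\ref{eq:def of q^V_J}), and bound the second term by $\exp(-2J/(9t))$ using Corollary \ref{cor:estimate on ratio q_J/q}. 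Transferring back through the diffeomorphism $\phi$ as above gives the stated identity (with the understanding that $D_k$ in the printed formula should be $D_1=1$, i.e., the bound reads $M(t)-1+\exp(-2\phi(I)/(9t))$).

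There is no genuine obstacle here: the result is a mechanical consequence of the transformation in Proposition \ref{prop:change of variable} together with the already-established perturbative estimate Proposition \ref{prop: approximation for q^V_J}. The only point that requires minor care is verifying that the neighborhood of the degenerate boundary on the $x$-side, namely $(0,\psi(\tfrac19\phi(I)))$, is indeed the exact preimage under $\phi$ of the neighborhood $(0,\tfrac19 J)$ that appears in Proposition \ref{prop: approximation for q^V_J}, and that $t_I$ can be chosen equal to $t_J$ because the time threshold is unchanged by the spatial transformation.
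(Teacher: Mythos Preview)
Your proposal is correct and matches the paper's approach exactly: the paper presents this corollary without proof, stating only that ``Proposition \ref{prop: approximation for q^V_J} can be rewritten as follows'' after introducing the definitions (\ref{eq:def of p^approx.}) and (\ref{eq:def of p^k-approx.}), and your argument makes explicit precisely the cancellation of the common factor $\frac{\Theta(\phi(x))}{\Theta(\phi(y))}\phi'(y)$ and the identification $t_I=t_J$ that this rewriting entails. Your observation about the ``particular case'' is also apt: the printed formula appears to contain typos (an equality where an inequality is meant, and a stray $D_k$ where $D_1=1$ is intended), and your derivation via $k=1$ or the direct split recovers the correct bound $M(t)-1+\exp(-2\phi(I)/(9t))$.
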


\begin{rem}
We want to point out that, from now on, whenever $J=\phi\left(I\right)$,
the constants $\Theta_{J}$, $V_{J}$ and $t_{J}$ that were introduced
in $\mathsection3$ will also be written as $\Theta_{I}$, $V_{I}$
and $t_{I}$ respectively. In addition, by plugging $\phi\left(x\right)$
into (\ref{eq:def of V}) and (\ref{eq:formula of Theta}), we get
that 
\begin{equation}
V\left(\phi\left(x\right)\right)=-\frac{\theta^{2}\left(x\right)}{4\phi\left(x\right)}-\frac{\theta^{\prime}\left(x\right)}{2\phi^{\prime}\left(x\right)}+\frac{1-\nu}{2}\frac{\theta\left(x\right)}{\phi\left(x\right)}.\label{eq:formula of V(phi)}
\end{equation}
and 
\begin{equation}
\Theta\left(\phi\left(x\right)\right)=\begin{cases}
\frac{x^{\frac{\alpha}{4}}a^{\frac{1}{4}}\left(x\right)}{2^{\frac{\alpha}{2\left(2-\alpha\right)}}\left(\phi\left(x\right)\right)^{\frac{\alpha}{4\left(2-\alpha\right)}}}\exp\left(-\int_{0}^{x}\frac{b\left(w\right)}{2w^{\alpha}a\left(w\right)}dw\right) & \text{ if }\alpha\neq1,\\
\frac{x^{\frac{1}{4}}a^{\frac{1}{4}}\left(x\right)}{2^{\frac{1}{2}-b_{0}}\left(\phi\left(x\right)\right)^{\frac{1}{4}-\frac{1}{2}b_{0}}x^{\frac{1}{2}b_{0}}}\exp\left(-\int_{0}^{x}\frac{1}{2w}\left(\frac{b\left(w\right)}{a\left(w\right)}-b\left(0\right)\right)dw\right) & \text{ if }\alpha=1.
\end{cases}\label{eq:formula of Theta(phi)}
\end{equation}
With (\ref{eq:formula of Theta(phi)}) and (\ref{eq:formula of V(phi)}),
it is possible to rewrite some of the expressions that appeared above
(e.g., (\ref{eq:def of p_I}) and (\ref{eq:symmetry of p_I})) in
a more explicit way, see, e.g., (\ref{eq:transformation factor between q^V and p})
and (\ref{eq:symmetry factor for p}) in the Appendix. Especially
when $b\left(x\right)\equiv0$, these expressions take much simpler
forms than in the general case, as we will see with a concrete example
in $\mathsection5$. 
\end{rem}

\section{Global Equation}

In the previous section we have solved the localized equation (\ref{eq:general IVP localized})
and obtained its fundamental solution $p_{I}\left(x,y,t\right)$.
Now we proceed with the last step to complete our project, which is
to build the ``link'' between (\ref{eq:general IVP localized})
and the original problem (\ref{eq:IVP general equation}). To achieve
this goal, we rely on the strong Markov property of $\left\{ X\left(x,t\right):t\geq0\right\} $
and the probabilistic interpretations of the solutions found in the
previous sections. 

\subsection{From $p_{I}\left(x,y,t\right)$ to $p\left(x,y,t\right)$}

We introduce two more notations for this section: given $I>0$,
\[
a_{I}:=\max_{x\in\left[0,I\right]}\left\{ \frac{1}{a\left(x\right)},a\left(x\right)\right\} \text{ and }b_{I}:=\max_{x\in\left[0,I\right]}\left|b\left(x\right)\right|.
\]
Our first task is to derive probability estimates for the hitting
times of $\left\{ X\left(x,t\right):t\geq0\right\} $. 
\begin{lem}
\label{lem:prob estimate on hitting time of X} We define, for $x\geq0$,
\begin{equation}
S\left(x\right):=\int_{0}^{\phi\left(x\right)}w^{-\nu}\Theta^{2}\left(w\right)dw.\label{eq:def of S(x)}
\end{equation}
Then, for every $0<x<y\leq I$,
\begin{equation}
\mathbb{P}\left(\zeta_{y}^{X}\left(x\right)<\zeta_{0}^{X}\left(x\right)\right)=\frac{S\left(x\right)}{S\left(y\right)};\label{eq:prob of X hitting y before 0}
\end{equation}
if $\Theta_{I}$ is the constant found in Lemma \ref{lem:estimates on VJ}
(upon identifying $\Theta_{J}$ with $\Theta_{I}$ for $J=\phi\left(I\right)$),
then 
\begin{equation}
\mathbb{P}\left(\zeta_{y}^{X}\left(x\right)<\zeta_{0}^{X}\left(x\right)\right)\leq\Theta_{I}^{4}\left(\frac{\phi\left(x\right)}{\phi\left(y\right)}\right)^{1-\nu}.\label{eq:prob estimate for X hitting H before absorbed}
\end{equation}

Moreover, for every $G\in\left(0,I\right)$, $x\in\left(0,G\right)$
and $t>0$ such that $I-G>tb_{I}$, we have that 
\begin{equation}
\mathbb{P}\left(\zeta_{I}^{X}\left(x\right)\leq t\right)\leq\exp\left(-\frac{\left(I-x-tb_{I}\right)^{2}}{4tI^{\alpha}a_{I}}\right).\label{eq:prob estimate for hitting time of X}
\end{equation}
\end{lem}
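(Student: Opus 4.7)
The assertion decomposes into three claims. The first, (\ref{eq:prob of X hitting y before 0}), amounts to identifying $S$ as the scale function of $L$ and running the standard martingale argument; (\ref{eq:prob estimate for X hitting H before absorbed}) is then an immediate consequence of the uniform bounds on $\Theta$ from Lemma \ref{lem:estimates on VJ}; and (\ref{eq:prob estimate for hitting time of X}) is an exponential hitting-time estimate obtained by mimicking the argument for $Y$ in Lemma \ref{lem: prob estimate for hitting time of Y}, adapted to the non-constant coefficients of $X$.

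For (\ref{eq:prob of X hitting y before 0}), I would first verify directly that $S\in C^{2}((0,\infty))$ satisfies $LS\equiv 0$, $S(0)=0$, and $S'>0$. From (\ref{eq:def of S(x)}) one reads off $S'(x)=\phi'(x)\,\phi(x)^{-\nu}\Theta^{2}(\phi(x))$; differentiating once more and substituting $\phi'(x)=x^{-\alpha/2}a^{-1/2}(x)\sqrt{\phi(x)}$, $\Theta'/\Theta=-\tilde\theta/(2z)$ from (\ref{eq:def of Theta}), and the definition of $\theta$ in (\ref{eq:def of phi =000026 theta}), careful bookkeeping reduces $(S')'/S'$ to $-b(x)/(x^{\alpha}a(x))$, so that $x^{\alpha}a\,S''+b\,S'=0$. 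That $w^{-\nu}$ is integrable at $0$ (hence $S(0)=0$) is ensured by $\nu<1$, itself a consequence of \textbf{(H1)}--\textbf{(H2)}. Then It\^{o}'s formula applied to (\ref{eq:SDE of X}) together with the local-martingale property recalled in $\mathsection 1.3$ show that $\{S(X(x,t\wedge\zeta_{0,y}^{X}(x))):t\geq 0\}$ is a bounded local martingale, hence a uniformly integrable martingale. Letting $t\to\infty$, equating expectations, and using $S(0)=0$ yield (\ref{eq:prob of X hitting y before 0}).

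The bound (\ref{eq:prob estimate for X hitting H before absorbed}) then follows by inserting the two-sided estimate $\Theta_{I}^{-1}\leq\Theta(w)\leq\Theta_{I}$ from (\ref{eq:bound Theta_J}) into (\ref{eq:def of S(x)}): for every $z\in(0,\phi(I)]$,
\[
\frac{\Theta_{I}^{-2}z^{1-\nu}}{1-\nu}\;\leq\;\int_{0}^{z}w^{-\nu}\Theta^{2}(w)\,dw\;\leq\;\frac{\Theta_{I}^{2}z^{1-\nu}}{1-\nu},
\]
so the ratio $S(x)/S(y)\leq\Theta_{I}^{4}\bigl(\phi(x)/\phi(y)\bigr)^{1-\nu}$.

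For (\ref{eq:prob estimate for hitting time of X}) I would copy the structure of the proof of (\ref{eq:prob estimate on hitting time of Y less than t}). Fix $\lambda>0$ and set $N(t):=\exp\bigl(\lambda X(x,t)-\lambda b_{I}t-\lambda^{2}I^{\alpha}a_{I}t\bigr)$. It\^{o}'s formula applied to (\ref{eq:SDE of X}) gives
\[
dN(t)=\lambda\sqrt{2X^{\alpha}(x,t)a(X(x,t))}\,N(t)\,dB(t)+N(t)\bigl[\lambda(b(X)-b_{I})+\lambda^{2}(X^{\alpha}a(X)-I^{\alpha}a_{I})\bigr]dt.
\]
On $\{t<\zeta_{I}^{X}(x)\}$, the bound $X\leq I$ forces $b(X)\leq b_{I}$ and $X^{\alpha}a(X)\leq I^{\alpha}a_{I}$, so the drift is non-positive and $N(t\wedge\zeta_{I}^{X}(x))$ is a bounded supermartingale (dominated by $e^{\lambda I}$). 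Consequently
\[
e^{\lambda x}\;\geq\;\mathbb{E}\bigl[N(t\wedge\zeta_{I}^{X}(x))\bigr]\;\geq\; e^{\lambda I-(\lambda b_{I}+\lambda^{2}I^{\alpha}a_{I})t}\,\mathbb{P}\bigl(\zeta_{I}^{X}(x)\leq t\bigr).
\]
Optimizing over $\lambda>0$ — the Chernoff choice $\lambda^{\ast}=(I-x-tb_{I})/(2tI^{\alpha}a_{I})$ is strictly positive because $x\in(0,G)$ together with $I-G>tb_{I}$ forces $I-x-tb_{I}>0$ — yields exactly (\ref{eq:prob estimate for hitting time of X}). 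The main obstacle will be the verification $LS\equiv 0$ in the first step: although ultimately routine calculus, it demands sustained care with the definitions of $\phi$, $\theta$, $\Theta$ and the specific choice of $\nu$ that makes the logarithmic derivatives cancel in precisely the right way.
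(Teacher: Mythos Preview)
Your proposal is correct and follows essentially the same approach as the paper. The paper is terser on the verification that $LS\equiv 0$ (it simply invokes It\^{o}'s formula and declares $\{S(X(x,t\wedge\zeta_{0,y}^{X}(x)))\}$ a bounded martingale), and for (\ref{eq:prob estimate for hitting time of X}) it uses the exact exponential martingale $\exp\bigl(\lambda X-\lambda\int_0^{\cdot}b(X)\,ds-\lambda^{2}\int_0^{\cdot}X^{\alpha}a(X)\,ds\bigr)$ stopped at $\zeta_I^X(x)$ and then bounds the path integrals by $(\lambda b_I+\lambda^{2}I^{\alpha}a_I)\zeta_I^X(x)$, whereas you fold the constants in from the start and work with a supermartingale directly; the two routes yield the identical Chernoff bound after optimization in $\lambda$.
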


\begin{proof}
We use Itô's formula to verify that, for every $y>x$, $\left\{ S\left(X\left(x,t\wedge\zeta_{0,y}^{X}\left(x\right)\right)\right)\right\} $
is a bounded martingale, and hence (\ref{eq:prob of X hitting y before 0})
follows immediately. Further, by (\ref{eq:bound Theta_J}), we have
that for every $x\in\left(0,I\right)$,
\begin{equation}
\Theta_{I}^{-2}\frac{\left(\phi\left(x\right)\right)^{1-\nu}}{1-\nu}\leq S\left(x\right)\leq\Theta_{I}^{2}\frac{\left(\phi\left(x\right)\right)^{1-\nu}}{1-\nu},\label{eq:estimate of S}
\end{equation}
which leads to (\ref{eq:prob estimate for X hitting H before absorbed}).

Now we get down to proving (\ref{eq:prob estimate for hitting time of X}),
and the proof is similar to that of Lemma \ref{lem: prob estimate for hitting time of Y}.
For every $\lambda\geq0$ and $t\geq0$, 
\[
\left\{ \exp\left(\lambda X\left(x,t\wedge\zeta_{I}^{X}\left(x\right)\right)-\lambda\int_{0}^{t\wedge\zeta_{I}^{X}\left(x\right)}b\left(X\left(x,s\right)\right)ds-\lambda^{2}\int_{0}^{t\wedge\zeta_{I}^{X}\left(x\right)}X^{\alpha}\left(x,s\right)a\left(X\left(x,s\right)\right)ds\right):t\geq0\right\} ,
\]
is a bounded martingale, from where we get that
\[
\mathbb{E}\left[\exp\left(-\lambda\int_{0}^{\zeta_{I}^{X}\left(x\right)}b\left(X\left(x,s\right)\right)ds-\lambda^{2}\int_{0}^{\zeta_{I}^{X}\left(x\right)}X^{\alpha}\left(x,s\right)a\left(X\left(x,s\right)\right)ds\right);\zeta_{I}^{X}\left(x\right)<\infty\right]\leq\exp\left(\lambda x-\lambda I\right).
\]
Since
\[
\left|\lambda\int_{0}^{\zeta_{I}^{X}\left(x\right)}b\left(X\left(x,s\right)\right)ds+\lambda^{2}\int_{0}^{\zeta_{I}^{X}\left(x\right)}X^{\alpha}\left(x,s\right)a\left(X\left(x,s\right)\right)ds\right|\leq\left(\lambda b_{I}+\lambda^{2}I^{\alpha}a_{I}\right)\zeta_{I}^{X}\left(x\right),
\]
we further have that
\begin{equation}
\mathbb{E}\left[\exp\left(-\left(\lambda b_{I}+\lambda^{2}I^{\alpha}a_{I}\right)\zeta_{I}^{X}\left(x\right)\right);\zeta_{I}^{X}\left(x\right)<\infty\right]\leq\exp\left(\lambda x-\lambda I\right).\label{eq:laplace transform of hitting time of X}
\end{equation}
By Markov's inequality, 
\[
\mathbb{P}\left(\zeta_{I}^{X}\left(x\right)\leq t\right)=\mathbb{P}\left(e^{-\left(\lambda b_{I}+\lambda^{2}I^{\alpha}a_{I}\right)\zeta_{I}^{X}\left(x\right)}\geq e^{-\left(\lambda b_{I}+\lambda^{2}I^{\alpha}a_{I}\right)t}\right)\leq e^{\lambda^{2}tI^{\alpha}a_{I}-\lambda\left(I-x-tb_{I}\right)}.
\]
(\ref{eq:prob estimate for hitting time of X}) is obtained by minimizing
the right hand side above over $\lambda\geq0$.
\end{proof}
Next, we consider $\left\{ p_{I}\left(x,y,t\right):I>0\right\} $
as a family parametrized by $I$, and for every $0<I<H$, we want
to find out the link between $p_{I}\left(x,y,t\right)$ and $p_{H}\left(x,y,t\right)$,
i.e., the fundamental solutions to (\ref{eq:general IVP localized})
with the right boundary at $I$ and $H$ respectively. To this end,
we choose a third constant $G\in\left(0,I\right)$ and define for
each $x\in\left(0,G\right)$ a sequence of hitting times of $\left\{ X\left(x,t\right):t\geq0\right\} $
where $\eta_{0}\left(x\right):=0$ and for $n\in\mathbb{N}\backslash\left\{ 0\right\} $,
\begin{equation}
\eta_{2n-1}\left(x\right):=\inf\left\{ s\geq\eta_{2n-2}\left(x\right):X\left(x,s\right)\geq I\right\} ,\eta_{2n}\left(x\right):=\inf\left\{ s\geq\eta_{2n-1}\left(x\right):X\left(x,s\right)\leq G\right\} .\label{eq:def of eta_n}
\end{equation}
In other words, the sequence $\left\{ \eta_{n}\left(x\right):n\in\mathbb{N}\right\} $
records the \emph{downward crossings of $\left\{ X\left(x,t\right):t\geq0\right\} $
from $I$ to $G$}. With the help of $\left\{ \eta_{n}\left(x\right):n\in\mathbb{N}\right\} $
and the strong Markov property of $\left\{ X\left(x,t\right):t\geq0\right\} $,
we are able to connect $p_{H}\left(x,y,t\right)$ and $p_{I}\left(x,y,t\right)$
as follows.
\begin{prop}
\label{prop:relation between p and p^theta}For $\left(x,y,t\right)\in\left(0,G\right)^{2}\times\left(0,\infty\right)$,
\begin{equation}
\begin{split}p_{H}\left(x,y,t\right) & =p_{I}\left(x,y,t\right)+\sum_{n=1}^{\infty}\mathbb{E}\left[p_{I}\left(G,y,t-\eta_{2n}\left(x\right)\right);\eta_{2n}\left(x\right)\leq t,\eta_{2n}\left(x\right)<\zeta_{0,H}^{X}\left(x\right)\right]\end{split}
.\label{eq:relation between p_I and p_H}
\end{equation}
\end{prop}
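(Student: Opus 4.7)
The plan is to verify the identity after integrating against an arbitrary Borel set $\Gamma\subseteq(0,G)$ and then to promote it to a pointwise statement using continuity in $y$. By the probabilistic interpretation (\ref{eq: prob interp. of p_I}) applied once with right boundary $H$ and once with right boundary $I$, the integrated form reduces to
\[
\mathbb{P}\bigl(X(x,t)\in\Gamma,\,t<\zeta_{0,H}^{X}(x)\bigr)=\int_{\Gamma}p_{I}(x,y,t)\,dy+\sum_{n=1}^{\infty}\mathbb{E}\!\left[\int_{\Gamma}p_{I}(G,y,t-\eta_{2n}(x))\,dy;\,\eta_{2n}(x)\leq t,\,\eta_{2n}(x)<\zeta_{0,H}^{X}(x)\right].
\]

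The key input is a pathwise decomposition of the event on the left using the stopping times $\{\eta_{k}(x)\}$. On each interval $[\eta_{2n-1}(x),\eta_{2n}(x))$ with $n\geq 1$ the path stays strictly above $G$ by construction of $\eta_{2n}(x)$, so the requirement $X(x,t)\in\Gamma\subseteq(0,G)$ forces $t$ to lie in some ``even'' interval $[\eta_{2n}(x),\eta_{2n+1}(x))$ with $n\geq 0$, yielding the disjoint decomposition
\[
\{X(x,t)\in\Gamma,\,t<\zeta_{0,H}^{X}(x)\}=\bigsqcup_{n\geq 0}\{X(x,t)\in\Gamma,\,\eta_{2n}(x)\leq t<\eta_{2n+1}(x),\,t<\zeta_{0,H}^{X}(x)\}.
\]
For each $n\geq 1$, I apply the strong Markov property of the unique solution $\{X(x,s):s\geq 0\}$ to (\ref{eq:SDE of X}) at the stopping time $\eta_{2n}(x)$: conditionally on $\mathcal{F}_{\eta_{2n}(x)}$ and on $\{\eta_{2n}(x)\leq t,\,\eta_{2n}(x)<\zeta_{0,H}^{X}(x)\}$, the shifted process is a fresh copy of $\{X(G,s):s\geq 0\}$ because $X(x,\eta_{2n}(x))=G$. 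The conditions ``$X(x,t)\in\Gamma$'', ``$t<\eta_{2n+1}(x)$'', and ``no visit to $\{0,H\}$ on $[\eta_{2n}(x),t]$'' translate for the restarted process into $\{X(G,t-\eta_{2n}(x))\in\Gamma,\,t-\eta_{2n}(x)<\zeta_{0,I}^{X}(G)\}$; the boundary at $H$ drops out since on this time segment the path is confined below $I<H$ by $t<\eta_{2n+1}(x)$. Hence the $n$-th conditional probability equals $\int_{\Gamma}p_{I}(G,y,t-\eta_{2n}(x))\,dy$ by (\ref{eq: prob interp. of p_I}), and taking expectations produces the summand as stated.

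The $n=0$ term is handled identically, but with $\eta_{0}(x)=0$ and starting point $x$, yielding $\int_{\Gamma}p_{I}(x,y,t)\,dy$. Summing over $n\geq 0$ gives the integrated identity; positivity of the summands together with the bound by $1$ on the left side justifies the interchange of sum and integral. Finally, both sides of the claimed pointwise identity are continuous in $y\in(0,G)$ — the left by Proposition \ref{prop:properties of p_I} applied with right boundary $H$, the right by continuity of $p_{I}$ combined with uniform convergence of the series on compact subsets of $(0,G)$, which follows from the a.s.\ finiteness of the number of downcrossings in $[0,t]$ and the local boundedness of $(y,s)\mapsto p_{I}(G,y,s)$ on $K\times[\varepsilon,t]$ for any compact $K\subset(0,G)$ and $\varepsilon>0$. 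Thus the integrated identity upgrades to the pointwise identity claimed. The main obstacle is simply the careful bookkeeping of the boundary-avoidance conditions when splitting the path at $\eta_{2n}(x)$; once the translation between the original and restarted processes is done cleanly, the rest is routine.
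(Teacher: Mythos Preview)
Your approach mirrors the paper's: integrate against test functions (the paper uses $f\in C_c((0,G))$ rather than Borel sets, but this is immaterial), decompose the event $\{X(x,t)\in\Gamma,\,t<\zeta_{0,H}^X(x)\}$ according to which interval $[\eta_{2n}(x),\eta_{2n+1}(x))$ contains $t$, apply the strong Markov property at each $\eta_{2n}(x)$, and then pass from the integrated identity to the pointwise one by continuity.

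The one soft spot is your justification for absolute and uniform convergence of the series. ``A.s.\ finiteness of the number of downcrossings in $[0,t]$'' does not by itself control the tail $\sum_{n\geq N}\mathbb{E}[\cdots]$, and your boundedness of $p_I(G,y,s)$ is stated only on $K\times[\varepsilon,t]$, which leaves the regime $t-\eta_{2n}(x)<\varepsilon$ unaddressed. The paper closes this gap explicitly: from (\ref{eq: estimate of q}), (\ref{eq:exp estimate for q^V_J}) and (\ref{eq:def of p_I}) it extracts the bound
\[
p_I\bigl(G,y,s\bigr)\leq M(t)\Bigl(\tfrac{2-\nu}{e}\Bigr)^{2-\nu}(\phi(G))^{1-\nu}\bigl(\sqrt{\phi(G)}-\sqrt{\phi(y)}\bigr)^{2(\nu-2)}\frac{\Theta(\phi(G))}{\Theta(\phi(y))}\phi'(y),
\]
which is uniform in $s\in(0,t]$, and couples it with the geometric decay $\mathbb{P}\bigl(\eta_{2n}(x)<\zeta_{0,H}^X(x)\bigr)\leq\frac{S(x)}{S(G)}\bigl(\frac{S(G)}{S(I)}\bigr)^{n}$ obtained from the scale function and the strong Markov property. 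Together these give absolute convergence uniformly in $y$ on compact subsets of $(0,G)$ and make the passage to the pointwise identity rigorous. Your outline is correct; you just need to supply these two quantitative ingredients in place of the qualitative ones you cite.
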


\begin{proof}
Given $f\in C_{c}\left(\left(0,G\right)\right)$, we use (\ref{eq: prob interp. of u_f,(0,I)})
to write 
\[
\int_{0}^{G}f\left(y\right)p_{H}\left(x,y,t\right)dy=\mathbb{E}\left[f\left(X\left(x,t\right)\right);t<\zeta_{0,H}^{X}\left(x\right)\right].
\]
According to the number of downward crossings (from $I$ to $G$)
completed by $\left\{ X\left(x,s\right):0\leq s\leq t\right\} $,
we further decompose $\mathbb{E}\left[f\left(X\left(x,t\right)\right);t<\zeta_{0,H}^{X}\left(x\right)\right]$
as
\[
\mathbb{E}\left[f\left(X\left(x,t\right)\right);t<\zeta_{0,I}^{X}\left(x\right)\right]+\sum_{n=1}^{\infty}\mathbb{E}\left[f\left(X\left(x,t\right)\right);\eta_{2n}\left(x\right)\leq t<\eta_{2n+1}\left(x\right)\wedge\zeta_{0,H}^{X}\left(x\right),\eta_{2n}\left(x\right)<\zeta_{0,H}^{X}\left(x\right)\right].
\]
By the strong Markov property of $X\left(x,t\right)$, we have that
for each $n\geq1$, 
\[
\begin{array}{c}
\mathbb{E}\left[f\left(X\left(x,t\right)\right);\eta_{2n}\left(x\right)\leq t<\eta_{2n+1}\left(x\right)\wedge\zeta_{0,H}^{X}\left(x\right),\eta_{2n}\left(x\right)<\zeta_{0,H}^{X}\left(x\right)\right]\\
\hspace{1cm}\hspace{1cm}\hspace{1cm}\hspace{1cm}=\mathbb{E}\left[\int_{0}^{G}f\left(y\right)p_{I}\left(G,y,t-\eta_{2n}^{X}\left(x\right)\right);\eta_{2n}\left(x\right)\leq t,\eta_{2n}\left(x\right)<\zeta_{0,H}^{X}\left(x\right)\right].
\end{array}
\]
On one hand, by (\ref{eq: estimate of q}), (\ref{eq:exp estimate for q^V_J})
and (\ref{eq:def of p_I}), 
\begin{align}
p_{I}\left(G,y,t-\eta_{2n}\left(x\right)\right) & \leq M\left(t\right)\frac{\left(\phi\left(G\right)\right)^{1-\nu}}{\left(t-\eta_{2n}\left(x\right)\right)^{2-\nu}}\exp\left(-\frac{\left(\sqrt{\phi\left(G\right)}-\sqrt{\phi\left(y\right)}\right)^{2}}{t-\eta_{2n}\left(x\right)}\right)\frac{\Theta\left(\phi\left(G\right)\right)}{\Theta\left(\phi\left(y\right)\right)}\phi^{\prime}\left(y\right)\nonumber \\
 & \leq M\left(t\right)\left(\frac{2-\nu}{e}\right)^{2-\nu}\left(\phi\left(G\right)\right)^{1-\nu}\left(\sqrt{\phi\left(G\right)}-\sqrt{\phi\left(y\right)}\right)^{2\left(\nu-2\right)}\frac{\Theta\left(\phi\left(G\right)\right)}{\Theta\left(\phi\left(y\right)\right)}\phi^{\prime}\left(y\right).\label{eq:estimate for p_I}
\end{align}
On the other hand, if $\eta_{2n}\left(x\right)<\zeta_{0,H}^{X}\left(x\right)$,
then it must be that (i) $\zeta_{G}^{X}\left(x\right)<\zeta_{0}^{X}\left(x\right)$,
(ii) during the time interval $\left[\zeta_{G}^{X}\left(x\right),\eta_{1}\left(x\right)\right]$,
the process starts from $G$ and hits $I$ before $0$, and (iii)
for each $j=0,\cdots,n-1$, during the time interval $\left[\eta_{2j}\left(x\right),\eta_{2j+1}\left(x\right)\right]$,
the process starts from $G$ and hits $I$ before $0$. Hence, by
(\ref{eq:prob of X hitting y before 0}) and the strong Markov property
of $X\left(x,t\right)$, we have that 
\begin{equation}
\mathbb{P}\left(\eta_{2n}\left(x\right)<\zeta_{0,H}^{X}\left(x\right)\right)\leq\mathbb{P}\left(\zeta_{G}^{X}\left(x\right)<\zeta_{0}^{X}\left(x\right)\right)\left(\mathbb{P}\left(\zeta_{I}^{X}\left(G\right)<\zeta_{0}^{X}\left(G\right)\right)\right)^{n}=\frac{S\left(x\right)}{S\left(G\right)}\left(\frac{S\left(G\right)}{S\left(I\right)}\right)^{n}.\label{eq:prob of eta_2n < zeta_0,H}
\end{equation}
Combining the above, we obtain that for every $\left(x,y,t\right)\in\left(0,G\right)^{2}\times\left(0,\infty\right)$,
\[
\begin{split} & \sum_{n=1}^{\infty}\mathbb{E}\left[p_{I}\left(G,y,t-\eta_{2n}\left(x\right)\right);\eta_{2n}\left(x\right)\leq t,\eta_{2n}\left(x\right)<\zeta_{0}^{X}\left(x\right)\right]\\
\leq & M\left(t\right)\left(\frac{2-\nu}{e}\right)^{2-\nu}\left(\phi\left(G\right)\right)^{1-\nu}\left(\sqrt{\phi\left(G\right)}-\sqrt{\phi\left(y\right)}\right)^{2\left(\nu-2\right)}\frac{\Theta\left(\phi\left(G\right)\right)}{\Theta\left(\phi\left(y\right)\right)}\phi^{\prime}\left(y\right)\frac{S\left(x\right)}{S\left(I\right)-S\left(G\right)}.
\end{split}
\]
This guarantees that the series in the right hand of (\ref{eq:relation between p_I and p_H})
is absolutely convergent.
\end{proof}
With Lemma \ref{lem:prob estimate on hitting time of X} and Proposition
\ref{prop:relation between p and p^theta}, we are ready to prove
our main result. 
\begin{thm}
\label{thm:main theorem}For every $\left(x,y,t\right)\in\left(0,\infty\right)^{3}$,
we set
\[
p\left(x,y,t\right):=\lim_{I\nearrow\infty}p_{I}\left(x,y,t\right).
\]
Given $0<G<I<H$, let $\left\{ \eta_{n}\left(x\right):n\in\mathbb{N}\right\} $
be the sequence of hitting times defined as in (\ref{eq:def of eta_n})
(for the downward crossings of $\left\{ X\left(x,t\right):t\geq0\right\} $
from $I$ to $G$). Then, for every $\left(x,y,t\right)\in\left(0,G\right)^{2}\times\left(0,\infty\right)$,
\begin{equation}
\begin{split}p\left(x,y,t\right) & =p_{I}\left(x,y,t\right)+\sum_{n=1}^{\infty}\mathbb{E}\left[p_{I}\left(G,y,t-\eta_{2n}\left(x\right)\right);\eta_{2n}\left(x\right)\leq t,\eta_{2n}\left(x\right)<\zeta_{0}^{X}\left(x\right)\right]\end{split}
.\label{eq:relation between p and p_I}
\end{equation}

$p\left(x,y,t\right)$ is continuous on $\left(0,\infty\right)^{3}$,
and for every $\left(x,y,t\right)\in\left(0,\infty\right)^{3}$,
\begin{equation}
\frac{\left(\phi\left(y\right)\right)^{1-\nu}}{\phi^{\prime}\left(y\right)}\Theta^{2}\left(\phi\left(y\right)\right)p\left(x,y,t\right)=\frac{\left(\phi\left(x\right)\right)^{1-\nu}}{\phi^{\prime}\left(x\right)}\Theta^{2}\left(\phi\left(x\right)\right)p\left(y,x,t\right).\label{eq:symmetry of p}
\end{equation}
For every $y>0$, $\left(x,t\right)\mapsto p\left(x,y,t\right)$ is
a smooth solution to the Kolmogorov backward equation corresponding
to $L$, i.e., 
\[
\partial_{t}p\left(x,y,t\right)=x^{\alpha}a\left(x\right)\partial_{x}^{2}p\left(x,y,t\right)+b\left(x\right)\partial_{x}p\left(x,y,t\right);
\]
for every $x>0$, $\left(y,t\right)\mapsto p\left(x,y,t\right)$ is
a smooth solution to the Kolmogorov forward equation corresponding
to $L$, i.e., 
\[
\partial_{t}p\left(x,y,t\right)=\partial_{y}^{2}\left(y^{\alpha}a\left(y\right)p\left(x,y,t\right)\right)-\partial_{y}\left(b\left(y\right)p\left(x,y,t\right)\right).
\]

$p\left(x,y,t\right)$ is the fundamental solution to (\ref{eq:IVP general equation}).
Given $f\in C_{c}\left(\left(0,\infty\right)\right)$, 
\[
u_{f}\left(x,t\right):=\int_{0}^{\infty}f\left(y\right)p\left(x,y,t\right)dy\text{ for }\left(x,t\right)\in\left(0,\infty\right)^{2}
\]
is the unique solution in $C^{2,1}\left(\left(0,\infty\right)^{2}\right)$
to (\ref{eq:IVP general equation}), and $u_{f}\left(x,t\right)$
is smooth on $\left(0,\infty\right)^{2}$. Moreover, for every $\left(x,t\right)\in\left(0,\infty\right)^{2}$,

\[
u_{f}\left(x,t\right)=\mathbb{E}\left[f\left(X\left(x,t\right)\right);t<\zeta_{0}^{X}\left(x\right)\right],
\]
and hence for every Borel set $\Gamma\subseteq\left(0,\infty\right)$,
\[
\int_{\Gamma}p\left(x,y,t\right)dy=\mathbb{P}\left(X\left(x,t\right)\in\Gamma,t<\zeta_{0}^{X}\left(x\right)\right).
\]

Finally, $p\left(x,y,t\right)$ satisfies the Chapman-Kolmogorov equation,
i.e., for every $x,y>0$ and $t,s>0$,
\begin{equation}
p\left(x,y,t+s\right)=\int_{0}^{\infty}p\left(x,\xi,t\right)p\left(\xi,y,s\right)d\xi.\label{eq:CK equation for p}
\end{equation}
\end{thm}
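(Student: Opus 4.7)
The strategy is to pass to the limit $I\nearrow\infty$ in all the properties already established for $p_I(x,y,t)$ in Proposition \ref{prop:properties of p_I}, using the probabilistic interpretation (\ref{eq: prob interp. of p_I}) and the decomposition of Proposition \ref{prop:relation between p and p^theta} as the main tools.

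\emph{Existence of the limit, formula (\ref{eq:relation between p and p_I}), and continuity.} Fix $0<G<I<H$ and invoke (\ref{eq:relation between p_I and p_H}). Each summand on the right hand side is non-negative, so $H\mapsto p_H(x,y,t)$ is non-decreasing, and $p(x,y,t):=\lim_{H\nearrow\infty}p_H(x,y,t)$ exists in $[0,\infty]$. The bound derived in the proof of Proposition \ref{prop:relation between p and p^theta} using (\ref{eq:estimate for p_I}) and (\ref{eq:prob of eta_2n < zeta_0,H}) is uniform in $H$, so dominated convergence applied term by term, together with the almost-sure increase $\{\eta_{2n}(x)<\zeta_{0,H}^X(x)\}\nearrow\{\eta_{2n}(x)<\zeta_0^X(x)\}$, yields (\ref{eq:relation between p and p_I}) and $p(x,y,t)<\infty$. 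Continuity on $(0,\infty)^3$ follows by the same argument applied locally: given a compact $K\subset(0,\infty)^3$, choose $G$ sufficiently small and $I$ sufficiently large so that $K\subset(G,I)^2\times(0,\infty)$; then $p_I$ is continuous on $K$ by Proposition \ref{prop:properties of p_I}, and the series in (\ref{eq:relation between p and p_I}) is uniformly-in-$K$ dominated by a convergent bound, so the limit is continuous.

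\emph{Probabilistic interpretation, symmetry, Kolmogorov equations, Chapman--Kolmogorov.} Monotone convergence applied to (\ref{eq: prob interp. of p_I}) together with $\zeta_{0,I}^X(x)\nearrow\zeta_0^X(x)$ (discussed below) gives the probabilistic density formula for $p$, and hence $u_f(x,t)=\mathbb{E}[f(X(x,t));t<\zeta_0^X(x)]$ for $f\in C_c((0,\infty))$. The symmetry (\ref{eq:symmetry of p}) is the pointwise limit of (\ref{eq:symmetry of p_I}). For regularity and the Kolmogorov equations, I would mimic the proof of Proposition \ref{prop:properties of q_J}: for $\varphi\in C_c^\infty((0,\infty))$, the local martingale property from the lemma of $\mathsection1.3$, applied to $\varphi(X(x,\cdot))$ stopped at $\zeta_0^X(x)$, gives
\[
\partial_t\!\int_0^\infty \varphi(y)p(x,y,t)\,dy=\int_0^\infty L\varphi(y)p(x,y,t)\,dy,
\]
so $(y,t)\mapsto p(x,y,t)$ solves the forward equation in the sense of distributions; hypoellipticity of $\partial_t-L^*$ then yields smoothness and the pointwise forward equation, and (\ref{eq:symmetry of p}) transfers this to the backward equation in $(x,t)$. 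The initial and boundary conditions for $u_f$ follow from the probabilistic formula via sample-path continuity at $t\searrow 0$ and the hitting-time estimate (\ref{eq:prob estimate for hitting time of X}); uniqueness within $C^{2,1}((0,\infty)^2)$ follows from Doob's optional stopping applied to a localized version of $\{u(X(x,s),t-s):0\le s\le t\}$. The Chapman--Kolmogorov equation (\ref{eq:CK equation for p}) is a direct consequence of the strong Markov property of $\{X(x,t):t\ge 0\}$, partitioning the event $\{X(x,t+s)\in\Gamma,\,t+s<\zeta_0^X(x)\}$ by the position at time $t$.

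\emph{Main obstacle.} The principal technical point is the almost-sure convergence $\zeta_{0,I}^X(x)\nearrow\zeta_0^X(x)$, equivalently non-explosion of $\{X(x,t):t\ge 0\}$. This is not immediate from (\ref{eq:prob estimate for hitting time of X}) alone, since the constants $a_I$ and $b_I$ there grow with $I$ under \textbf{(H2)}; however, via the change of variable (\ref{eq:change of variable priliminary}), the process $\{Z(\xi(x),t):t\ge 0\}$ has coefficients satisfying the linear growth bound (\ref{eq:linear growth of the coefficients}), from which standard non-explosion criteria (e.g.\ Khasminskii's test) yield $\zeta_I^Z(\xi(x))\nearrow\infty$ almost surely, and hence $\zeta_I^X(x)\nearrow\infty$ almost surely. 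Once this non-explosion is secured, all the monotone- and dominated-convergence arguments above go through without further difficulty.
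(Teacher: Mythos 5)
Your proposal is correct and follows essentially the same route as the paper: monotonicity of $I\mapsto p_I(x,y,t)$ deduced from Proposition \ref{prop:relation between p and p^theta}, passage to the limit $H\nearrow\infty$ in (\ref{eq:relation between p_I and p_H}) to get (\ref{eq:relation between p and p_I}), uniform convergence of the crossing series for continuity, monotone convergence for the probabilistic interpretation, and a test-function/hypoellipticity argument for smoothness and the Kolmogorov equations. Two small remarks: the paper disposes of the non-explosion issue ($\zeta_{0,H}^{X}\left(x\right)\nearrow\zeta_{0}^{X}\left(x\right)$) merely by citing \cite{multi_dim_diff_proc}, whereas you correctly observe that (\ref{eq:prob estimate for hitting time of X}) alone is insufficient (since $a_I$, $b_I$ grow with $I$) and supply the linear-growth argument via (\ref{eq:linear growth of the coefficients}); and in your continuity step the decomposition (\ref{eq:relation between p and p_I}) is valid for $\left(x,y\right)\in\left(0,G\right)^{2}$, so the compact set should be covered by taking $G$ (and then $I>G$) sufficiently \emph{large}, not $G$ small.
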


\begin{proof}
It is clear from (\ref{eq:relation between p_I and p_H}) that for
every $\left(x,y,t\right)\in\left(0,\infty\right)^{3}$, by taking
$G>x\vee y$, we know that the family $I\in\left(G,\infty\right)\mapsto p_{I}\left(x,y,t\right)$
is non-decreasing, so $p\left(x,y,t\right)$ as the limit of $p_{I}\left(x,y,t\right)$
(as $I\nearrow\infty$) is well defined. Since $\left\{ X\left(x,t\right):t\geq0\right\} $
is the unique solution to (\ref{eq:SDE of X}), $\zeta_{0,H}^{X}\left(x\right)\rightarrow\zeta_{0}^{X}\left(x\right)$
almost surely as $H\nearrow\infty$ (see, e.g., $\mathsection10$
of \cite{multi_dim_diff_proc}). Thus, (\ref{eq:relation between p and p_I})
follows from (\ref{eq:relation between p_I and p_H}) by sending $H$
to infinity, and (\ref{eq:symmetry of p}) follows from (\ref{eq:symmetry of p_I}). 

Now we examine the continuity of $p\left(x,y,t\right)$. First, (\ref{eq:estimate for p_I})
and (\ref{eq:prob of eta_2n < zeta_0,H}) guarantee that the series
in the right hand side of (\ref{eq:relation between p and p_I}) converges
uniformly on any bounded subset of $\left(0,G\right)^{2}\times\left(0,\infty\right)$,
from where it is easy to see that for every $x\in\left(0,G\right)$,
$\left(y,t\right)\mapsto p\left(x,y,t\right)$ is continuous on $\left(0,G\right)\times\left(0,\infty\right)$.
Furthermore in the proof of Proposition \ref{prop:properties of q^V_J}
we have seen that $x\mapsto p_{I}\left(G,x,s\right)$ is equicontinuous
in $s$ from any bounded subset of $\left(0,\infty\right)$, which,
combined with (\ref{eq:symmetry of p}), leads to the continuity of
$p\left(x,y,t\right)$ in all three variables.

Next, we turn our attention to $u_{f}\left(x,t\right)$ for $f\in C_{c}\left(\left(0,\infty\right)\right)$.
It is clear that
\begin{align*}
u_{f}\left(x,t\right) & =\lim_{I\nearrow\infty}\int_{0}^{I}f\left(y\right)p_{I}\left(x,y,t\right)dy=\lim_{I\nearrow\infty}u_{f,\left(0,I\right)}\left(x,t\right),
\end{align*}
and further by (\ref{eq: prob interp. of u_f,(0,I)}), 
\[
u_{f}\left(x,t\right)=\lim_{I\nearrow\infty}\mathbb{E}\left[f\left(X\left(x,t\right)\right);t<\zeta_{0,I}^{X}\left(x\right)\right]=\mathbb{E}\left[f\left(X\left(x,t\right)\right);t<\zeta_{0}^{X}\left(x\right)\right],
\]
which means that $p\left(x,y,t\right)$ is indeed the probability
density function of $X\left(x,t\right)$ provided that $t<\zeta_{0}^{X}\left(x\right)$.
(\ref{eq:CK equation for p}) follows from the strong Markov property
of $\left\{ X\left(x,t\right):t\geq0\right\} $. Furthermore, by (\ref{eq:relation between p and p_I}),
if $G$ and $I$ are sufficiently large such that $x\in\left(0,G\right)$
and $\text{supp}\left(f\right)\subseteq\left(0,I\right)$, then
\[
u_{f}\left(x,t\right)=u_{f,\left(0,I\right)}\left(x,t\right)+\sum_{n=1}^{\infty}\mathbb{E}\left[u_{f,\left(0,I\right)}\left(G,t-\eta_{2n}\left(x\right)\right);\eta_{2n}\left(x\right)\leq t,\eta_{2n}\left(x\right)<\zeta_{0}^{X}\left(x\right)\right].
\]
Let us re-examine the event $\left\{ \eta_{2n}\left(x\right)\leq t,\eta_{2n}\left(x\right)<\zeta_{0}^{X}\left(x\right)\right\} $
involved in the series above. If $\eta_{2n}\left(x\right)\leq t$,
then we must have that $\zeta_{G}^{X}\left(x\right)<\zeta_{0}^{X}\left(x\right)$,
$\eta_{1}\left(x\right)-\zeta_{G}^{X}\left(x\right)\leq t$, and for
each $j=0,\cdots,n-1$, $\eta_{2j+1}\left(x\right)-\eta_{2j}\left(x\right)\leq t$.
Thus,
\[
\mathbb{P}\left(\eta_{2n}\left(x\right)\leq t,\eta_{2n}\left(x\right)<\zeta_{0}^{X}\left(x\right)\right)\leq\mathbb{P}\left(\zeta_{G}^{X}\left(x\right)<\zeta_{0}^{X}\left(x\right)\right)\left(\mathbb{P}\left(\zeta_{I}^{X}\left(G\right)<t\right)\right)^{n}.
\]
By (\ref{eq:prob of X hitting y before 0}) and (\ref{eq:prob estimate for hitting time of X}),
we have that when $I-G>tb_{I}$,
\begin{equation}
\mathbb{P}\left(\eta_{2n}\left(x\right)\leq t,\eta_{2n}\left(x\right)<\zeta_{0}^{X}\left(x\right)\right)\leq\frac{S\left(x\right)}{S\left(G\right)}\exp\left(-\frac{n\left(I-G-tb_{I}\right)^{2}}{4tI^{\alpha}a_{I}}\right).\label{eq:prob of eta_2n < min =00007Bt,zeta_0,H=00007D}
\end{equation}
Therefore, when $t$ is sufficiently small,
\[
\begin{split} & \left|\sum_{n=1}^{\infty}\mathbb{E}\left[u_{f,\left(0,I\right)}\left(G,t-\eta_{2n}\left(x\right)\right);\eta_{2n}\left(x\right)\leq t,\eta_{2n}\left(x\right)<\zeta_{0}^{X}\left(x\right)\right]\right|\\
\leq & \left\Vert f\right\Vert _{u}\sum_{n=1}^{\infty}\mathbb{P}\left(\eta_{2n}\left(x\right)\leq t,\eta_{2n}\left(x\right)<\zeta_{0}^{X}\left(x\right)\right)\\
\leq & \left\Vert f\right\Vert _{u}\frac{S\left(x\right)}{S\left(G\right)}\exp\left(-\frac{\left(I-G-tb_{I}\right)^{2}}{4tI^{\alpha}a_{I}}\right)\frac{4tI^{\alpha}\alpha_{I}}{\left(I-G-tb_{I}\right)^{2}}
\end{split}
\]
which tends to $0$ as $t\searrow0$ or as $x\searrow0$. Therefore,
we have that
\[
\lim_{x\searrow0}u_{f}\left(x,t\right)=\lim_{x\searrow0}u_{f,\left(0,I\right)}\left(x,t\right)=0\text{ and }\lim_{t\searrow0}u_{f}\left(x,t\right)=\lim_{t\searrow0}u_{f,\left(0,I\right)}\left(x,t\right)=f\left(x\right).
\]

The only remaining thing is to prove the statement on $p\left(x,y,t\right)$
and $u_{f}\left(x,t\right)$ being smooth solutions to the concerned
equations, which, again, by the hypoellipticity of $\partial_{t}-L$,
is reduced to showing that they are distribution solutions. Take $u_{f}\left(x,t\right)$
for instance. We observe that for any $\varphi\in C_{c}^{\infty}\left(\left(0,\infty\right)\right)$,
\[
\begin{split}\left\langle \varphi,u_{f}\left(\cdot,t\right)\right\rangle  & =\lim_{I\nearrow\infty}\int_{0}^{\infty}\varphi\left(x\right)u_{f,\left(0,I\right)}\left(x,t\right)dx\\
 & =\left\langle \varphi,f\right\rangle +\lim_{I\nearrow\infty}\int_{0}^{t}\int_{0}^{\infty}\varphi\left(x\right)Lu_{f,\left(0,I\right)}\left(x,s\right)dxds\\
 & =\left\langle \varphi,f\right\rangle +\lim_{I\nearrow\infty}\int_{0}^{t}\int_{0}^{\infty}\left(L^{*}\varphi\right)\left(x\right)u_{f,\left(0,I\right)}\left(x,s\right)dxds\\
 & =\left\langle \varphi,f\right\rangle +\int_{0}^{t}\int_{0}^{\infty}\left(L^{*}\varphi\right)\left(x\right)u_{f}\left(x,t\right)dxds,
\end{split}
\]
which implies that 
\[
\frac{d}{dt}\left\langle \varphi,u_{f}\left(\cdot,t\right)\right\rangle =\left\langle L^{*}\varphi,u_{f}\left(\cdot,t\right)\right\rangle .
\]
This confirms that $u_{f}\left(x,t\right)$ is a solution to (\ref{eq:IVP general equation})
as a distribution. The statements on $p\left(x,y,t\right)$ follow
from similar arguments.
\end{proof}
\begin{rem}
We want to point out that the function $S\left(x\right)$ defined
in (\ref{eq:def of S(x)}) has a specific role in the boundary classification
for diffusion process. In fact, $S\left(x\right)$ is the \emph{scale
function} for the underlying diffusion process corresponding to $L$,
and as $x$ approaches a boundary, whether $S\left(x\right)$ remains
bounded or not is a factor in boundary classification (see $\mathsection15.6$
of \cite{Karlin_Taylor}). In particular, when viewing $\infty$ as
a boundary of $\left(0,\infty\right)$, we introduce the \emph{escape
probability} at $G>0$ (escaping from $G$ to $\infty$) as 
\begin{equation}
\mathfrak{p}_{G}:=\lim_{I\rightarrow\infty}\mathbb{P}\left(\zeta_{I}^{X}\left(G\right)<\zeta_{0}^{X}\left(G\right)\right).\label{eq:escaping probability at G}
\end{equation}
Then, when $\lim_{x\rightarrow\infty}S\left(x\right)=\infty$, $\infty$
is \emph{non-attracting,} in which case (\ref{eq:prob of X hitting y before 0})
implies that $\mathfrak{p}_{G}=0$; when $\lim_{x\rightarrow\infty}S\left(x\right)<\infty$,
$\infty$ is \emph{attracting} and $\mathfrak{p}_{G}>0$. 
\end{rem}

\subsection{Approximation of $p\left(x,y,t\right)$}

In the previous sections, for the fundamental solutions that do not
have explicit formulas, we provide approximations that are accessible
and of high accuracy, at least for small time. These approximations
can be useful in computational applications of degenerate diffusion
equations studied in this work. Below we will present an approximation
for $p\left(x,y,t\right)$ in the same spirit. In particular, we find
explicitly defined approximations to $p\left(x,y,t\right)$ such that
(i) these approximations are more accurate than the standard heat
kernel estimates, and (ii) when $t$ is sufficiently small, these
approximations are ``close'' to $p\left(x,y,t\right)$ uniformly
in $\left(x,y\right)$ in any compact set. Note that this result is
a generalization of \cite{deg_diff_global} for that the error estimates
we derive here only depend on the local bounds of $a\left(x\right)$
and $b\left(x\right)$.
\begin{thm}
\label{thm:approximation of p(x,y,t) }Let $p^{approx.}\left(x,y,t\right)$
and $p^{k-approx.}\left(x,y,t\right)$, $k\in\mathbb{N}\backslash\left\{ 0\right\} $,
be defined as in (\ref{eq:def of p^approx.}) and (\ref{eq:def of p^k-approx.})
respectively. For any $G>0$, set $t_{G}:=\frac{4\phi\left(G\right)}{9\left(2-\nu\right)}$.
Then, for every $t\in\left(0,t_{G}\right)$, $I>G$ and $k\in\mathbb{N}\backslash\left\{ 0\right\} $,
\begin{equation}
\begin{split} & \sup_{\left(x,y\right)\in\left(0,\psi\left(\frac{\phi\left(G\right)}{9}\right)\right)^{2}}\left|\frac{p\left(x,y,t\right)-p^{k-approx.}\left(x,y,t\right)}{p^{approx.}\left(x,y,t\right)}\right|\\
 & \hspace{0.5cm}\hspace{0.5cm}\hspace{0.5cm}\leq m_{k}\left(t\right)M\left(t\right)+\left[D_{k}\left(t\right)+\frac{\Theta_{G}^{4}M\left(t\right)\left(\phi\left(G\right)\right)^{1-\nu}}{\left(1-\nu\right)\left(S\left(I\right)-S\left(G\right)\right)}\right]\exp\left(-\frac{2\phi\left(G\right)}{9t}\right),
\end{split}
\label{eq:estimate p/p^k-approx}
\end{equation}
where $m_{k}\left(t\right)$, $M\left(t\right)$ and $D_{k}\left(t\right)$
are as in (\ref{eq:def of m_n(t)}) and (\ref{eq:def of D_k}) respectively.

In particular, there exists constant $C>0$ uniformly in $t\in\left(0,t_{G}\right)$
and $k\in\mathbb{N}\backslash\left\{ 0\right\} $ ($C$ may depend
on $G$ and $\nu$) such that 
\[
\sup_{\left(x,y\right)\in\left(0,\psi\left(\frac{\phi\left(G\right)}{9}\right)\right)^{2}}\left|\frac{p\left(x,y,t\right)-p^{k-approx.}\left(x,y,t\right)}{p^{approx.}\left(x,y,t\right)}\right|\leq Ct^{k\mathfrak{b}},
\]
where $\mathfrak{b}$ is the constant defined in (\ref{eq:def of mathfrak(b)}).
\end{thm}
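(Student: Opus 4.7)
The plan is to combine the probabilistic expansion of $p(x,y,t)$ obtained in Proposition \ref{prop:relation between p and p^theta} (specifically the formula (\ref{eq:relation between p and p_I}) which persists after sending the outer boundary $H$ to infinity) with the already-established near-boundary approximation for $p_{I}$ in Corollary \ref{cor:approximation for p_I}. The natural decomposition is
\[
p(x,y,t) - p^{k-approx.}(x,y,t) = \bigl[p_{I}(x,y,t) - p^{k-approx.}(x,y,t)\bigr] + \mathcal{R}(x,y,t),
\]
where $\mathcal{R}(x,y,t) := \sum_{n\geq 1}\mathbb{E}\bigl[p_{I}(G,y,t-\eta_{2n}(x));\eta_{2n}(x)\leq t,\eta_{2n}(x)<\zeta_{0}^{X}(x)\bigr]$. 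I will divide by $p^{approx.}(x,y,t)$ and estimate both pieces uniformly over $(x,y)\in(0,\psi(\phi(G)/9))^{2}$.

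For the first bracket I simply invoke Corollary \ref{cor:approximation for p_I}. Since $G<I$ gives $\psi(\phi(G)/9)<\psi(\phi(I)/9)$ and $t_{G}<t_{I}$, Corollary \ref{cor:approximation for p_I} applies on the smaller region, yielding $|p_{I}-p^{k-approx.}|/p^{approx.}\le m_{k}(t)M(t)+D_{k}(t)e^{-2\phi(I)/(9t)}$, and monotonicity of the exponential in $\phi(I)\ge\phi(G)$ absorbs this into the term $m_{k}(t)M(t)+D_{k}(t)e^{-2\phi(G)/(9t)}$ of the target bound.

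For the remainder $\mathcal{R}$ I first uniformly bound $p_{I}(G,y,s)$ over $s\in(0,t)$. Using the defining relation (\ref{eq:def of p_I}) together with the global bound (\ref{eq:exp estimate for q^V_J}) and the Gaussian upper estimate in (\ref{eq: estimate of q}), one gets $p_{I}(G,y,s)\le M(t)\frac{\phi(G)^{1-\nu}}{s^{2-\nu}}e^{-(\sqrt{\phi(G)}-\sqrt{\phi(y)})^{2}/s}\,\Theta(\phi(G))\phi'(y)/\Theta(\phi(y))$; when $\phi(y)\le\phi(G)/9$ the exponent dominates $-4\phi(G)/(9s)$, and on $(0,t_{G})$ the map $s\mapsto s^{\nu-2}e^{-4\phi(G)/(9s)}$ is monotone, so the supremum over $s\in(0,t)$ is attained at $s=t$. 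On the denominator side the lower bound in (\ref{eq: estimate of q}) gives $p^{approx.}(x,y,t)\ge \Gamma(2-\nu)^{-1}\phi(x)^{1-\nu}t^{\nu-2}e^{-2\phi(G)/(9t)}\Theta(\phi(x))\phi'(y)/\Theta(\phi(y))$ since $\phi(x)+\phi(y)\le 2\phi(G)/9$. The excursion probabilities are controlled by the geometric estimate $\mathbb{P}(\eta_{2n}(x)<\zeta_{0}^{X}(x))\le (S(x)/S(G))(S(G)/S(I))^{n}$ used in the proof of Proposition \ref{prop:relation between p and p^theta} (after letting $H\nearrow\infty$), so $\sum_{n\ge 1}\mathbb{P}(\eta_{2n}(x)<\zeta_{0}^{X}(x))\le S(x)/(S(I)-S(G))$. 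Combining and invoking the upper bound $S(x)\le\Theta_{G}^{2}\phi(x)^{1-\nu}/(1-\nu)$ from (\ref{eq:estimate of S}) (so that the stray $\phi(x)^{1-\nu}$ cancels) and $\Theta(\phi(G))/\Theta(\phi(x))\le\Theta_{G}^{2}$ from (\ref{eq:bound Theta_J}) produces the second summand of the bracket in (\ref{eq:estimate p/p^k-approx}).

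The main delicate point is the exponent bookkeeping: one must use the crude lower bound $e^{-(\phi(x)+\phi(y))/t}\ge e^{-2\phi(G)/(9t)}$ (losing a factor of $2$ in the Gaussian rate relative to the upper bound $e^{-4\phi(G)/(9t)}$) precisely so that the net decay rate is $2\phi(G)/(9t)$, matching both the rate produced by Corollary \ref{cor:approximation for p_I} and the rate claimed in (\ref{eq:estimate p/p^k-approx}). Once this decomposition is carried out, the second, quantitative statement follows trivially by noting that $m_{k}(t)\sim t^{k\mathfrak{b}}$ by the definition (\ref{eq:def of m_n(t)}), while $M(t)$, $D_{k}(t)$ and the $e^{-2\phi(G)/(9t)}$ factor are bounded uniformly on $(0,t_{G})$ and $e^{-2\phi(G)/(9t)}$ decays faster than any power of $t$.
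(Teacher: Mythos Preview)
Your proposal is correct and follows essentially the same route as the paper's proof: the same decomposition into $[p_{I}-p^{k\text{-}approx.}]$ (handled by Corollary \ref{cor:approximation for p_I}) and the excursion remainder $\mathcal{R}$ (handled via the geometric estimate on $\mathbb{P}(\eta_{2n}(x)<\zeta_{0}^{X}(x))$, the Gaussian upper bound on $p_{I}(G,y,\cdot)$ from (\ref{eq: estimate of q}) and (\ref{eq:exp estimate for q^V_J}), the monotonicity of $s\mapsto s^{\nu-2}e^{-4\phi(G)/(9s)}$ on $(0,t_{G})$, and the lower bound on $p^{approx.}$). Your exponent bookkeeping and the use of (\ref{eq:estimate of S}) and (\ref{eq:bound Theta_J}) to produce the $\Theta_{G}^{4}$ factor exactly match the paper's argument; the only cosmetic difference is that you explicitly carry the $\Gamma(2-\nu)^{-1}$ from the lower bound in (\ref{eq: estimate of q}), which the paper silently absorbs.
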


\begin{proof}
Only (\ref{eq:estimate p/p^k-approx}) requires proof. For every $\left(x,y,t\right)\in\left(0,G\right)^{2}\times\left(0,\infty\right)$,
we have that
\[
\begin{split}\left|\frac{p\left(x,y,t\right)-p^{k-approx.}\left(x,y,t\right)}{p^{approx.}\left(x,y,t\right)}\right| & \leq\left|\frac{p\left(x,y,t\right)-p_{I}\left(x,y,t\right)}{p^{approx.}\left(x,y,t\right)}\right|+\left|\frac{p_{I}\left(x,y,t\right)-p^{k-approx.}\left(x,y,t\right)}{p^{approx.}\left(x,y,t\right)}\right|.\end{split}
\]
By (\ref{eq: estimate of (p_I - p_k,approx.)/p_approx.}), we have
that for every $t\in\left(0,t_{G}\right)$, the second term on the
right hand side above is bounded uniformly in $\left(x,y\right)\in\left(0,\psi\left(\frac{\phi\left(G\right)}{9}\right)\right)^{2}$
by 
\[
m_{k}\left(t\right)M\left(t\right)+D_{k}\left(t\right)\exp\left(-\frac{2\phi\left(G\right)}{9t}\right).
\]
We define hitting times $\left\{ \eta_{n}\left(x\right):n\in\mathbb{N}\right\} $
as in (\ref{eq:def of eta_n}) (for the downward crossings from $I$
to $G$). Then, according to (\ref{eq:relation between p and p_I}),
\[
p\left(x,y,t\right)-p_{I}\left(x,y,t\right)=\sum_{n=1}^{\infty}\mathbb{E}\left[p_{I}\left(G,y,t-\eta_{2n}\left(x\right)\right);\eta_{2n}\left(x\right)\leq t,\eta_{2n}\left(x\right)<\zeta_{0}^{X}\left(x\right)\right].
\]
It follows from (\ref{eq: estimate of q}), (\ref{eq:exp estimate for q^V_J}),
(\ref{eq:def of p_I}) and (\ref{eq:prob of eta_2n < zeta_0,H}) that
for every $t\in\left(0,t_{G}\right)$ and $\left(x,y\right)\in\left(0,\psi\left(\frac{\phi\left(G\right)}{9}\right)\right)^{2}$,
\[
\begin{split} & \left|p\left(x,y,t\right)-p_{I}\left(x,y,t\right)\right|\\
\leq & M\left(t\right)\left(\sup_{s\in\left(0,t\right)}s^{\nu-2}e^{-\frac{4\phi\left(G\right)}{9s}}\right)\left(\phi\left(G\right)\right)^{1-\nu}\frac{\Theta\left(\phi\left(G\right)\right)}{\Theta\left(\phi\left(y\right)\right)}\phi^{\prime}\left(y\right)\frac{S\left(x\right)}{S\left(I\right)-S\left(G\right)}\\
\leq & M\left(t\right)t^{\nu-2}\left(\phi\left(G\right)\right)^{1-\nu}\frac{\Theta\left(\phi\left(G\right)\right)}{\Theta\left(\phi\left(y\right)\right)}\phi^{\prime}\left(y\right)\exp\left(-\frac{4\phi\left(G\right)}{9t}\right)\frac{S\left(x\right)}{S\left(I\right)-S\left(G\right)},
\end{split}
\]
and further by (\ref{eq:def of p^approx.}) and (\ref{eq:estimate of S})
we have that
\begin{align*}
\left|\frac{p\left(x,y,t\right)-p_{I}\left(x,y,t\right)}{p^{approx.}\left(x,y,t\right)}\right| & \leq M\left(t\right)\frac{\Theta\left(\phi\left(G\right)\right)}{\Theta\left(\phi\left(x\right)\right)}\left(\frac{\phi\left(G\right)}{\phi\left(x\right)}\right)^{1-\nu}\exp\left(-\frac{2\phi\left(G\right)}{9t}\right)\frac{S\left(x\right)}{S\left(I\right)-S\left(G\right)}\\
 & \le\frac{\Theta_{G}^{4}M\left(t\right)\left(\phi\left(G\right)\right)^{1-\nu}}{\left(1-\nu\right)\left(S\left(I\right)-S\left(G\right)\right)}\exp\left(-\frac{2\phi\left(G\right)}{9t}\right).
\end{align*}
\end{proof}
We close this section with two variations of (\ref{eq:estimate p/p^k-approx}).
First, by (\ref{eq:estimate of S}), we note that
\[
\frac{1}{S\left(I\right)-S\left(G\right)}=\frac{1}{S\left(G\right)}\frac{S\left(G\right)/S\left(I\right)}{1-S\left(G\right)/S\left(I\right)}\leq\Theta_{G}^{2}\frac{1-\nu}{\left(\phi\left(G\right)\right)^{1-\nu}}\frac{S\left(G\right)/S\left(I\right)}{1-S\left(G\right)/S\left(I\right)}.
\]
 Therefore, by sending $I$ to $\infty$ in (\ref{eq:estimate p/p^k-approx}),
we get the following estimate.
\begin{cor}
\label{cor:approx for p(x,y,t) variation 1}For every $G>0$, let
$t_{G}>0$ be the same as in Theorem \ref{thm:approximation of p(x,y,t) },
and $\mathfrak{p}_{G}$ be defined as in (\ref{eq:escaping probability at G}).
Then, for every $t\in\left(0,t_{G}\right)$,
\[
\begin{split} & \sup_{\left(x,y\right)\in\left(0,\psi\left(\frac{\phi\left(G\right)}{9}\right)\right)^{2}}\left|\frac{p\left(x,y,t\right)-p^{k-approx.}\left(x,y,t\right)}{p^{approx.}\left(x,y,t\right)}\right|\\
 & \hspace{0.5cm}\hspace{0.5cm}\hspace{0.5cm}\hspace{0.5cm}\leq m_{k}\left(t\right)M\left(t\right)+\left(D_{k}\left(t\right)+\Theta_{G}^{6}M\left(t\right)\frac{\mathfrak{p}_{G}}{1-\mathfrak{p}_{G}}\right)\exp\left(-\frac{2\phi\left(G\right)}{9t}\right).
\end{split}
\]
\end{cor}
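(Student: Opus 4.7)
The plan is to derive the corollary by taking the limit $I \nearrow \infty$ in the estimate (\ref{eq:estimate p/p^k-approx}) supplied by Theorem \ref{thm:approximation of p(x,y,t) }. The crucial observation is that the left-hand side of (\ref{eq:estimate p/p^k-approx}) is independent of $I$, so the inequality remains valid if one takes the infimum over all $I > G$ of the right-hand side. Among the terms on the right, only the factor $1/(S(I) - S(G))$ depends on $I$, and since $S$ is strictly increasing in $x$ (immediate from (\ref{eq:def of S(x)})), this factor is minimized in the limit $I \nearrow \infty$.

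The algebraic step is exactly the one previewed just before the statement of the corollary. First write
\[
\frac{1}{S(I) - S(G)} = \frac{1}{S(G)} \cdot \frac{S(G)/S(I)}{1 - S(G)/S(I)},
\]
and apply the lower bound $S(G) \geq \Theta_G^{-2}(\phi(G))^{1-\nu}/(1-\nu)$ provided by (\ref{eq:estimate of S}) to obtain
\[
\frac{\Theta_G^4 M(t)(\phi(G))^{1-\nu}}{(1-\nu)\,(S(I)-S(G))} \;\leq\; \Theta_G^6 M(t) \cdot \frac{S(G)/S(I)}{1 - S(G)/S(I)}.
\]
By (\ref{eq:prob of X hitting y before 0}), the ratio $S(G)/S(I)$ is exactly the hitting probability $\mathbb{P}(\zeta_I^X(G) < \zeta_0^X(G))$, which by definition (\ref{eq:escaping probability at G}) converges to $\mathfrak{p}_G$ as $I \nearrow \infty$. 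Since $\mathfrak{p}_G \in [0,1)$, the map $r \mapsto r/(1-r)$ is continuous at $r = \mathfrak{p}_G$, so the ratio on the right tends to $\mathfrak{p}_G/(1 - \mathfrak{p}_G)$. Substituting this limit into (\ref{eq:estimate p/p^k-approx}) yields the claimed bound.

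There is no serious obstacle; the only point requiring a brief sanity check is that the formula is uniformly valid regardless of the boundary classification at $\infty$. If $\infty$ is non-attracting then $S(I) \nearrow \infty$ and $\mathfrak{p}_G = 0$, in which case the extra $\Theta_G^6 M(t) \cdot \mathfrak{p}_G/(1 - \mathfrak{p}_G) = 0$ contribution is consistent with the stronger (I-dependent) bound. If $\infty$ is attracting then $S(I)$ converges to a finite limit and $\mathfrak{p}_G \in (0,1)$, yielding a strictly positive contribution in the second term, as asserted.
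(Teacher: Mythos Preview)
Your proof is correct and follows essentially the same approach as the paper: rewrite $1/(S(I)-S(G))$ via the identity, apply the lower bound on $S(G)$ from (\ref{eq:estimate of S}), and send $I\nearrow\infty$ using (\ref{eq:prob of X hitting y before 0}) and the definition of $\mathfrak{p}_G$. Your additional remarks on the attracting versus non-attracting cases are not in the paper's brief derivation but are a reasonable sanity check.
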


Second, by making $t_{G}$ in Theorem \ref{thm:approximation of p(x,y,t) }
smaller if necessary, we can derive an estimate analogous to (\ref{eq:estimate p/p^k-approx})
but independent of $\mathfrak{p}_{G}$. Intuitively speaking, when
$t$ is sufficiently small, how well $p^{approx.}\left(x,y,t\right)$
approximates $p\left(x,y,t\right)$ should not depend on the probability
of the process escaping to infinity. To make it rigorous, we first
observe that \textbf{(H2)} guarantees the existence of $t_{G}^{\prime}>0$
such that 
\begin{equation}
I-G-t_{G}^{\prime}b_{I}>2\sqrt{t_{G}^{\prime}I^{\alpha}\alpha_{I}}\text{ for every }I>2G;\label{eq:choice of t_G}
\end{equation}
then, by using (\ref{eq:prob of eta_2n < min =00007Bt,zeta_0,H=00007D})
instead of (\ref{eq:prob of eta_2n < zeta_0,H}) in the proof of (\ref{eq:estimate p/p^k-approx}),
we get that for every $t\in\left(0,t_{G}^{\prime}\right)$ and $\left(x,y\right)\in\left(0,\psi\left(\frac{\phi\left(G\right)}{9}\right)\right)^{2}$,
$\left|p\left(x,y,t\right)-p_{I}\left(x,y,t\right)\right|$ is bounded
from above by 
\[
\begin{split} & M\left(t\right)t^{\nu-2}\left(\phi\left(G\right)\right)^{1-\nu}\frac{\Theta\left(\phi\left(G\right)\right)}{\Theta\left(\phi\left(y\right)\right)}\phi^{\prime}\left(y\right)\exp\left(-\frac{4\phi\left(G\right)}{9t}-\frac{\left(I-G-tb_{I}\right)^{2}}{4tI^{\alpha}a_{I}}\right)\frac{S\left(x\right)}{S\left(G\right)}\frac{4tI^{\alpha}\alpha_{I}}{\left(I-G-tb_{I}\right)^{2}}\\
\leq & M\left(t\right)t^{\nu-2}\left(\phi\left(G\right)\right)^{1-\nu}\frac{\Theta\left(\phi\left(G\right)\right)}{\Theta\left(\phi\left(y\right)\right)}\phi^{\prime}\left(y\right)\frac{S\left(x\right)}{S\left(G\right)}\exp\left(-\frac{4\phi\left(G\right)}{9t}\right).
\end{split}
\]
It follows that for every $\left(x,y,t\right)\in\left(0,\psi\left(\frac{\phi\left(G\right)}{9}\right)\right)^{2}\times\left(0,t_{G}^{\prime}\right)$,
\begin{align*}
\left|\frac{p\left(x,y,t\right)-p_{I}\left(x,y,t\right)}{p^{approx.}\left(x,y,t\right)}\right| & \leq M\left(t\right)\frac{\Theta\left(\phi\left(G\right)\right)}{\Theta\left(\phi\left(x\right)\right)}\left(\frac{\phi\left(G\right)}{\phi\left(x\right)}\right)^{1-\nu}\frac{S\left(x\right)}{S\left(G\right)}\exp\left(-\frac{2\phi\left(G\right)}{9t}\right)\\
 & \le\Theta_{G}^{6}M\left(t\right)\exp\left(-\frac{2\phi\left(G\right)}{9t}\right).
\end{align*}
Therefore, we have the following estimate on the error between $p^{k-approx.}\left(x,y,t\right)$
and $p\left(x,y,t\right)$, which is a potential improvement of (\ref{eq:estimate p/p^k-approx})
for small $t$.
\begin{cor}
\label{cor:approx for p(x,y,t) variation 2}For every $G>0$, let
$t_{G}^{\prime}>0$ be such that (\ref{eq:choice of t_G}) holds.
Then, for every $t\in\left(0,t_{G}^{\prime}\right)$,
\[
\begin{split} & \sup_{\left(x,y\right)\in\left(0,\psi\left(\frac{\phi\left(G\right)}{9}\right)\right)^{2}}\left|\frac{p\left(x,y,t\right)-p^{k-approx.}\left(x,y,t\right)}{p^{approx.}\left(x,y,t\right)}\right|\\
 & \hspace{0.5cm}\hspace{0.5cm}\hspace{0.5cm}\hspace{0.5cm}\leq m_{k}\left(t\right)M\left(t\right)+\left(D_{k}\left(t\right)+\Theta_{G}^{6}M\left(t\right)\right)\exp\left(-\frac{2\phi\left(G\right)}{9t}\right).
\end{split}
\]
\end{cor}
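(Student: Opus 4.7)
The plan is to follow the same template as the proof of Theorem \ref{thm:approximation of p(x,y,t) }, but to replace the crude probability bound (\ref{eq:prob of eta_2n < zeta_0,H}) (which forced the factor $1/(S(I)-S(G))$) by the sharper estimate (\ref{eq:prob of eta_2n < min =00007Bt,zeta_0,H=00007D}), which also exploits the time constraint $\eta_{2n}(x)\le t$. First I would decompose, for any $I>2G$,
\[
\frac{p(x,y,t)-p^{k-approx.}(x,y,t)}{p^{approx.}(x,y,t)} = \frac{p(x,y,t)-p_{I}(x,y,t)}{p^{approx.}(x,y,t)} + \frac{p_{I}(x,y,t)-p^{k-approx.}(x,y,t)}{p^{approx.}(x,y,t)}.
\]
By Corollary \ref{cor:approximation for p_I}, the second summand is dominated uniformly in $I$ by $m_{k}(t)M(t)+D_{k}(t)\exp\!\bigl(-2\phi(G)/(9t)\bigr)$, which already produces the first half of the claimed bound.

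For the first summand I would invoke (\ref{eq:relation between p and p_I}) from Theorem \ref{thm:main theorem} to write $p-p_{I}$ as the sum over $n\ge 1$ of the expectations $\mathbb{E}[p_{I}(G,y,t-\eta_{2n}(x));\,\eta_{2n}(x)\le t,\,\eta_{2n}(x)<\zeta_{0}^{X}(x)]$. Inside each expectation I would bound $p_{I}(G,y,\cdot)$ pointwise by combining (\ref{eq: estimate of q}), (\ref{eq:exp estimate for q^V_J}) and (\ref{eq:def of p_I}), using that for $y\in(0,\psi(\phi(G)/9))$ one has $(\sqrt{\phi(G)}-\sqrt{\phi(y)})^{2}\ge 4\phi(G)/9$. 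Taking $t_G'$ sufficiently small (as in Corollary \ref{cor:estimate on ratio q_J/q}), the map $s\mapsto s^{\nu-2}\exp(-4\phi(G)/(9s))$ is monotone on $(0,t_G')$, so this factor can simply be evaluated at $s=t$. This reduces the problem to controlling $\sum_{n\ge 1}\mathbb{P}(\eta_{2n}(x)\le t,\eta_{2n}(x)<\zeta_{0}^{X}(x))$.

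The key new input is (\ref{eq:prob of eta_2n < min =00007Bt,zeta_0,H=00007D}): because each of the $n$ upward passages from $G$ to $I$ must be completed in time at most $t$, the strong Markov property and (\ref{eq:prob estimate for hitting time of X}) give
\[
\mathbb{P}(\eta_{2n}(x)\le t,\,\eta_{2n}(x)<\zeta_{0}^{X}(x))\le\frac{S(x)}{S(G)}\exp\!\left(-\frac{n(I-G-tb_{I})^{2}}{4tI^{\alpha}a_{I}}\right).
\]
The defining property (\ref{eq:choice of t_G}) of $t_G'$ guarantees that for $t<t_G'$ and $I>2G$ the ratio $(I-G-tb_{I})^{2}/(4tI^{\alpha}a_{I})$ exceeds $1$, so the geometric series in $n$ is summable, with total mass bounded by $4tI^{\alpha}a_{I}/(I-G-tb_{I})^{2}\le 1$ times the $n=1$ exponential. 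Combining this with the pointwise estimate on $p_I(G,y,\cdot)$, dividing by $p^{approx.}(x,y,t)$, and using (\ref{eq:estimate of S}) to replace $S(x)/S(G)$ by $\Theta_G^4(\phi(x)/\phi(G))^{1-\nu}$ and (\ref{eq:bound Theta_J}) to absorb the $\Theta$ ratios, the first summand is dominated by $\Theta_G^{6}M(t)\exp(-2\phi(G)/(9t))$ uniformly in $I>2G$. Adding the two pieces yields the corollary.

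The main obstacle is not the algebra, but ensuring that after summation the bound is genuinely independent of $I$: the whole reason for upgrading (\ref{eq:prob of eta_2n < zeta_0,H}) to (\ref{eq:prob of eta_2n < min =00007Bt,zeta_0,H=00007D}) is to replace $1/(S(I)-S(G))$, which degenerates when $\infty$ is attracting (i.e.\ $S(\infty)<\infty$ and $\mathfrak{p}_G>0$), by the $I$-independent constant $\Theta_G^{6}$, at the expense of shrinking the admissible time range from $t_G$ to $t_G'$.
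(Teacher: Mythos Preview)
Your proposal is correct and follows essentially the same route as the paper: split $p-p^{k\text{-}approx.}$ into $(p-p_I)+(p_I-p^{k\text{-}approx.})$, handle the second piece via Corollary~\ref{cor:approximation for p_I}, and for the first piece replace the crude bound (\ref{eq:prob of eta_2n < zeta_0,H}) by (\ref{eq:prob of eta_2n < min =00007Bt,zeta_0,H=00007D}), using condition (\ref{eq:choice of t_G}) to make the geometric sum over $n$ bounded by $1$ uniformly in $I>2G$, and then absorbing $S(x)/S(G)$ and the $\Theta$-ratios via (\ref{eq:estimate of S}) and (\ref{eq:bound Theta_J}) into the factor $\Theta_G^{6}$. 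The only small point to make explicit is that $t_G'$ must also satisfy $t_G'\le t_G=4\phi(G)/(9(2-\nu))$ so that the monotonicity of $s\mapsto s^{\nu-2}\exp(-4\phi(G)/(9s))$ is available; the paper handles this by the phrase ``making $t_G$ smaller if necessary,'' and you should state it similarly.
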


\section{Generalized Wright-Fisher Diffusion}

As reviewed in $\mathsection1.1$, the classical Wright-Fisher diffusion
equation given by (\ref{eq: classical WF eq}) has two degenerate
boundaries at $0$ and $1$, and the localization method was adopted
in \cite{siamwfeq} so that one only needs to focus on one boundary
at a time. Although in our setting only degenerate diffusions with
one-sided boundary are concerned, the framework developed in the previous
sections can also be applied to degenerate diffusions with two-sided
boundaries. In this section we discuss a variation of the Wright-Fisher
diffusion where the diffusion operator has general order of degeneracy
at both boundaries 0 and 1.

For two constants $\alpha,\beta\in\left(0,2\right)$, we consider
the following Cauchy problem with two-sided boundaries on $\left(0,1\right)$,
where, given $f\in C_{b}\left(\left(0,1\right)\right)$, we look for
$u_{f}\left(x,t\right)\in C^{2,1}\left(\left(0,1\right)\times\left(0,\infty\right)\right)$
such that
\begin{equation}
\begin{array}{c}
\partial_{t}u_{f}\left(x,t\right)=x^{\alpha}\left(1-x\right)^{\beta}\partial_{x}^{2}u_{f}\left(x,t\right)\text{ for every }\left(x,t\right)\in\left(0,1\right)\times\left(0,\infty\right),\\
\lim_{t\searrow0}u_{f}\left(x,t\right)=f\left(x\right)\text{ for every }x\in\left(0,1\right),\\
\lim_{x\searrow0}u_{f}\left(x,t\right)=\lim_{x\nearrow1}u_{f}\left(x,t\right)=0\text{ for every }t\in\left(0,\infty\right).
\end{array}\label{eq: IVP 2sided boundary}
\end{equation}
Set $L_{\alpha,\beta}:=x^{\alpha}\left(1-x\right)^{\beta}\partial_{x}^{2}$.
We want to apply the method developed in the previous sections to
construct and study the fundamental solution $p\left(x,y,t\right)$
to (\ref{eq: IVP 2sided boundary}). $L_{\alpha,\beta}$ has two degenerate
boundaries 0 and 1 with (possibly distinct) general order of degeneracy,
and both boundaries are attainable according to the boundary classification
mentioned in Remark \ref{rem:boundary classification}. 

Although having a second degenerate boundary at $1$, $L_{\alpha,\beta}$
has the advantage that its coefficient $x^{\alpha}\left(1-x\right)^{\beta}$
is bounded on $\left(0,1\right)$. Therefore, for every $x\in\left(0,1\right)$,
the stochastic differential equation
\begin{equation}
dX\left(x,t\right)=\sqrt{2X^{\alpha}\left(x,t\right)\left(1-X\left(x,t\right)\right)^{\beta}}dB\left(t\right)\text{ with }X\left(x,0\right)\equiv x\label{eq:SDE two-sided bdry case}
\end{equation}
always has a solution in the sense described in $\mathsection1.3$
(see, e.g., of \cite{PDEStroock}). Although we are not yet ready
to claim the uniqueness of this solution, we can follow the theory
in $\mathsection12$ of \cite{PDEStroock} to extract a solution to
(\ref{eq:SDE two-sided bdry case}) that has strong Markov property.
In other words, (\ref{eq:SDE two-sided bdry case}) always has a solution
$\left\{ X\left(x,t\right):t\geq0\right\} $ that is a strong Markov
process. 

The existence of a strong Markovian solution to (\ref{eq:SDE two-sided bdry case})
enables us to follow the steps in $\mathsection2-\mathsection4$ to
tackle (\ref{eq: IVP 2sided boundary}). In particular, with the localization
procedure, we have the option of placing our ``focal point'' in
the neighborhood of either $0$ or $1$ while constructing $p\left(x,y,t\right)$.
We will see that these two views are consistent and will lead to the
same $p\left(x,y,t\right)$. 

Let us start with the construction of $p\left(x,y,t\right)$ with
a focus only on the left boundary $0$, and we will follow the steps
in the previous sections with $a\left(x\right)=\left(1-x\right)^{\beta}$
and $b\left(x\right)\equiv0$. Here we only state the results of each
step but leave the computational details in the Appendix (i.e., (\ref{eq:Theta_I(L) two-sided bdry case})-(\ref{eq:VI in two-sided bdry case})).
We add a superscript ``$^{(L)}$'' to relevant quantities and functions
to indicate that only the left boundary $0$ is ``effective'' in
this construction. 

We take $I\in\left(0,1\right)$ and localize (\ref{eq: IVP 2sided boundary})
onto $\left(0,I\right)$. All the functions involved in the transformation
are as follows:
\[
\phi^{(L)}\left(x\right)=\frac{1}{4}\left(b^{(L)}\left(x\right)\right)^{2}\text{ and }\theta^{(L)}\left(x\right)=\frac{\alpha}{2\left(2-\alpha\right)}-\frac{\alpha-\alpha x-\beta x}{4}x^{\frac{\alpha-2}{2}}\left(1-x\right)^{\frac{\beta-2}{2}}b^{(L)}\left(x\right),
\]
where $b^{(L)}\left(x\right):=\int_{0}^{x}s^{-\alpha/2}\left(1-s\right)^{-\beta/2}ds$
is the incomplete beta function; furthermore,
\[
\Theta\left(\phi^{(L)}\left(x\right)\right)=\frac{x^{\frac{\alpha}{4}}\left(1-x\right)^{\frac{\beta}{4}}}{\left(b^{(L)}\left(x\right)\right)^{\frac{\alpha}{2\left(2-\alpha\right)}}}\text{ with }\Theta_{I}^{(L)}=\left(\frac{2}{2-\alpha}\right)^{\frac{\alpha}{2\left(2-\alpha\right)}}\left(1-I\right)^{-\frac{\beta}{2\left(2-\alpha\right)}};
\]
in addition, for every $x\in\left(0,I\right)$,
\[
V\left(\phi^{(L)}\left(x\right)\right)=-\frac{\alpha\left(\alpha-4\right)}{4\left(2-\alpha\right)^{2}\left(b^{(L)}\left(x\right)\right)^{2}}+x^{\alpha-2}\left(1-x\right)^{\beta-2}\left(\frac{\left(\alpha-\alpha x-\beta x\right)^{2}}{16}-\frac{\alpha\left(1-x\right)^{2}+\beta x^{2}}{4}\right),
\]
and hence
\[
\left|V\left(\phi^{(L)}\left(x\right)\right)\right|\leq V_{I}^{(L)}\left(\phi^{(L)}\left(x\right)\right)^{-\frac{1-\alpha}{2-\alpha}}\text{ with }V_{I}^{(L)}=\frac{\beta}{16}\left(4-\beta+2\alpha\right)\left(1-I\right)^{\frac{\beta}{2-\alpha}-2}.
\]
This confirms that the statement in Lemma \ref{lem:estimates on VJ}
still holds in this case.

Next, for the model equation discussed in $\mathsection2.2$, we plug
in $\nu^{(L)}:=\frac{1-\alpha}{2-\alpha}$ and obtain $q^{\left(L\right)}\left(z,w,t\right)$
as in (\ref{eq:def of q}) and $q_{\phi^{(L)}\left(I\right)}^{(L)}\left(z,w,t\right)$
as in (\ref{eq:relation between q and q_J}) accordingly. We then
follow exactly the same steps as in $\mathsection3.1$ to derive $q_{\phi^{(L)}\left(I\right)}^{(L),V}\left(z,w,t\right)$
based on $q_{\phi^{(L)}\left(I\right)}^{(L)}\left(z,w,t\right)$,
and to obtain $p_{I}^{(L)}\left(x,y,t\right)$ through reversing the
transformation $z=\phi^{(L)}\left(x\right)$, i.e., 
\[
p_{I}^{(L)}\left(x,y,t\right)=q_{\phi^{(L)}\left(I\right)}^{(L),V}\left(\phi^{(L)}\left(x\right),\phi^{(L)}\left(y\right),t\right)\frac{x^{\frac{\alpha}{4}}\left(1-x\right)^{\frac{\beta}{4}}}{2y^{\frac{3\alpha}{4}}\left(1-y\right)^{\frac{3\beta}{4}}}\frac{\left(b^{(L)}\left(y\right)\right)^{\frac{4-\alpha}{2\left(2-\alpha\right)}}}{\left(b^{(L)}\left(x\right)\right)^{\frac{\alpha}{2\left(2-\alpha\right)}}}.
\]

\noindent To proceed, we follow the arguments in $\mathsection4$
to obtain the fundamental solution to (\ref{eq: IVP 2sided boundary})
as 
\[
p\left(x,y,t\right)=\lim_{I\nearrow1}p_{I}^{(L)}\left(x,y,t\right)\text{ for }\left(x,y,t\right)\in\left(0,1\right)^{2}\times\left(0,\infty\right).
\]

By (\ref{eq:SDE two-sided bdry case}), $\left\{ X\left(x,t\right):t\geq0\right\} $
itself is a martingale, and as in Lemma \ref{lem:prob estimate on hitting time of X},
we can derive probability estimates for the hitting times of $X\left(x,t\right)$
as
\begin{equation}
\mathbb{P}\left(\zeta_{y}^{X}\left(x\right)<\zeta_{0}^{X}\left(x\right)\right)=\frac{x}{y}\text{ and }\mathbb{P}\left(\zeta_{I}^{X}\left(x\right)\leq t\right)\leq\exp\left(-\frac{\left(I-x\right)^{2}}{4M_{\alpha,\beta}t}\right)\label{eq:prob estimate for hitting time of X two-sided bdry case}
\end{equation}
for every $0<x<y<I$ and $t>0$, where
\[
M_{\alpha,\beta}:=\max_{x\in\left[0,1\right]}x^{\alpha}\left(1-x\right)^{\beta}=\frac{\alpha^{\alpha}\beta^{\beta}}{\left(\alpha+\beta\right)^{\alpha+\beta}}.
\]
For every $0<G<I<H<1$, if $\left\{ \eta_{n}\left(x\right):n\in\mathbb{N}\right\} $
is the sequence of hitting times as in (\ref{eq:def of eta_n}) (for
the downward crossings of $X\left(x,t\right)$ from $I$ to $G$),
then for every $\left(x,y,t\right)\in\left(0,G\right)^{2}\times\left(0,\infty\right)$,
\begin{equation}
p\left(x,y,t\right)=p_{I}^{(L)}\left(x,y,t\right)+\sum_{n=1}^{\infty}\mathbb{E}\left[p_{I}^{(L)}\left(G,y,t-\eta_{2n}\left(x\right)\right);\eta_{2n}\left(x\right)\leq t,\eta_{2n}\left(x\right)<\zeta_{0,1}^{X}\left(x\right)\right],\label{eq:def of p two-sided bdry case left bdry}
\end{equation}
where the series on the right hand side is absolutely convergent. 

Let us rewrite the results in Theorem \ref{thm:main theorem} for
$p\left(x,y,t\right)$ found above.
\begin{prop}
\label{prop:property of p in two-sided bdry case} $p\left(x,y,t\right)$
is smooth on $\left(0,1\right)^{2}\times\left(0,\infty\right)$, and
for every $\left(x,y,t\right)\in\left(0,1\right)^{2}\times\left(0,\infty\right)$,
\begin{equation}
y^{\alpha}\left(1-y\right)^{\beta}p\left(x,y,t\right)=x^{\alpha}\left(1-x\right)^{\beta}p\left(y,x,t\right).\label{eq:symmetry of p two-sided}
\end{equation}
For every $y\in\left(0,1\right)$, $\left(x,t\right)\mapsto p\left(x,y,t\right)$
is a smooth solution to the Kolmogorov backward equation corresponding
to $L_{\alpha,\beta}$, i.e., 
\[
\partial_{t}p\left(x,y,t\right)=x^{\alpha}\left(1-x\right)^{\beta}\partial_{x}^{2}p\left(x,y,t\right);
\]
for every $x\in\left(0,1\right)$, $\left(y,t\right)\mapsto p\left(x,y,t\right)$
is a smooth solution to the Kolmogorov forward equation corresponding
to $L_{\alpha,\beta}$, i.e., 
\[
\partial_{t}p\left(x,y,t\right)=\partial_{y}^{2}\left(y^{\alpha}\left(1-y\right)^{\beta}p\left(x,y,t\right)\right).
\]

$p\left(x,y,t\right)$ is the fundamental solution to (\ref{eq: IVP 2sided boundary}).
Given $f\in C_{c}\left(\left(0,1\right)\right)$, 
\begin{equation}
u_{f}\left(x,t\right):=\int_{0}^{\infty}f\left(y\right)p\left(x,y,t\right)dy\text{ for }\left(x,t\right)\in\left(0,1\right)\times\left(0,\infty\right)\label{eq:def of u_f two-sided bdry case}
\end{equation}
is a smooth solution to (\ref{eq:IVP general equation}). Moreover,
for every $\left(x,t\right)\in\left(0,1\right)\times\left(0,\infty\right)$,
\begin{equation}
u_{f}\left(x,t\right)=\mathbb{E}\left[f\left(X\left(x,t\right)\right);t<\zeta_{0,1}^{X}\left(x\right)\right],\label{eq:prob interpretation of u_f two-sided bdry case}
\end{equation}
and hence for every Borel set $\Gamma\subseteq\left(0,1\right)$,
\begin{equation}
\int_{\Gamma}p\left(x,y,t\right)dy=\mathbb{P}\left(X\left(x,t\right)\in\Gamma,t<\zeta_{0,1}^{X}\left(x\right)\right)\label{eq:prob interpretation of p two-sided bdry case}
\end{equation}

Finally, $p\left(x,y,t\right)$ satisfies the Chapman-Kolmogorov equation,
i.e., for every $x,y\in\left(0,1\right)$ and $t,s>0$,
\begin{equation}
p\left(x,y,t+s\right)=\int_{0}^{\infty}p\left(x,\xi,t\right)p\left(\xi,y,s\right)d\xi.\label{eq:CK equation for p two-sided}
\end{equation}
\end{prop}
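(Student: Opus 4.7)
The plan is to mirror verbatim the proof of Theorem \ref{thm:main theorem} and Proposition \ref{prop:properties of p_I}, using the decomposition (\ref{eq:def of p two-sided bdry case left bdry}) in place of (\ref{eq:relation between p and p_I}) and the hitting time estimates (\ref{eq:prob estimate for hitting time of X two-sided bdry case}) in place of those from Lemma \ref{lem:prob estimate on hitting time of X}. The machinery developed for $p_I\left(x,y,t\right)$ carries over essentially unchanged because the localized problem on $\left(0,I\right)$ with $I<1$ has bounded, smooth coefficients with degeneracy only at $0$, which is exactly the regime covered in $\mathsection2$--$\mathsection4$. First I would fix $0<G<I<H<1$ and verify that the series in (\ref{eq:def of p two-sided bdry case left bdry}) converges uniformly on compact subsets of $\left(0,G\right)^{2}\times\left(0,\infty\right)$: combining the Gaussian-type upper bound on $p_{I}^{(L)}\left(G,y,s\right)$ coming from (\ref{eq:estimate for p_I}) with the geometric decay $\mathbb{P}\left(\eta_{2n}\left(x\right)<\zeta_{0,1}^{X}\left(x\right)\right)\leq\frac{x}{G}\left(\frac{G}{I}\right)^{n}$ (which follows from the martingale property of $\left\{ X\left(x,t\right)\right\} $, as in (\ref{eq:prob of eta_2n < zeta_0,H})) gives absolute convergence and continuity of $p\left(x,y,t\right)$ on $\left(0,1\right)^{2}\times\left(0,\infty\right)$.

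Second, the symmetry (\ref{eq:symmetry of p two-sided}) follows from the corresponding identity (\ref{eq:symmetry of p_I}) applied to $p_{I}^{(L)}\left(x,y,t\right)$: plugging $a\left(x\right)=\left(1-x\right)^{\beta}$, $b\left(x\right)\equiv0$ into (\ref{eq:def of phi =000026 theta})--(\ref{eq:def of Theta}) gives $\left(\phi^{(L)}\right)'\left(x\right)=\frac{1}{2}x^{-\alpha}\left(1-x\right)^{-\beta}b^{(L)}\left(x\right)$ and a straightforward computation reduces the weight $\frac{\left(\phi^{(L)}\left(y\right)\right)^{1-\nu}}{\left(\phi^{(L)}\right)'\left(y\right)}\Theta^{2}\left(\phi^{(L)}\left(y\right)\right)$ to a constant multiple of $y^{\alpha}\left(1-y\right)^{\beta}$; taking $I\nearrow1$ yields (\ref{eq:symmetry of p two-sided}). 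The Chapman--Kolmogorov identity (\ref{eq:CK equation for p two-sided}) and the probabilistic representation (\ref{eq:prob interpretation of p two-sided bdry case}) then follow directly from the strong Markov property of $\left\{ X\left(x,t\right):t\geq0\right\} $ combined with the limit $\zeta_{0,I}^{X}\left(x\right)\nearrow\zeta_{0,1}^{X}\left(x\right)$ as $I\nearrow1$, exactly as in the proof of Theorem \ref{thm:main theorem}.

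Third, for smoothness and for showing that $u_{f}\left(x,t\right)$ solves the backward and forward equations, the key point is that $\partial_{t}-L_{\alpha,\beta}$ and its formal adjoint are strictly parabolic, hence hypoelliptic, on the open set $\left(0,1\right)\times\left(0,\infty\right)$. I would take $\varphi\in C_{c}^{\infty}\left(\left(0,1\right)\right)$ with $\mathrm{supp}\left(\varphi\right)\subseteq\left(0,I\right)$ and write
\[
\frac{d}{dt}\bigl\langle\varphi,u_{f}\left(\cdot,t\right)\bigr\rangle=\lim_{I\nearrow1}\bigl\langle L_{\alpha,\beta}^{*}\varphi,u_{f,\left(0,I\right)}^{(L)}\left(\cdot,t\right)\bigr\rangle=\bigl\langle L_{\alpha,\beta}^{*}\varphi,u_{f}\left(\cdot,t\right)\bigr\rangle,
\]
where interchange of limit and integral is justified by the uniform bound on $\left\Vert u_{f,\left(0,I\right)}^{(L)}\right\Vert _{u}\leq\left\Vert f\right\Vert _{u}$ coming from the probabilistic interpretation. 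Hypoellipticity then upgrades the distributional solution to a smooth one and transfers smoothness to $p\left(x,y,t\right)$ through (\ref{eq:def of u_f two-sided bdry case}) and (\ref{eq:symmetry of p two-sided}). The initial condition $\lim_{t\searrow0}u_{f}\left(x,t\right)=f\left(x\right)$ is immediate from (\ref{eq:prob interpretation of u_f two-sided bdry case}) and the sample path continuity of $X\left(x,t\right)$.

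The main obstacle I anticipate is the right-hand Dirichlet boundary condition $\lim_{x\nearrow1}u_{f}\left(x,t\right)=0$, since the localization construction in (\ref{eq:def of p two-sided bdry case left bdry}) is centered at the left boundary $0$ and the decay factor $S\left(x\right)/S\left(G\right)=x/G$ does not vanish as $x\nearrow1$. The cleanest remedy is to exploit the $x\leftrightarrow1-x$, $\alpha\leftrightarrow\beta$ symmetry of $L_{\alpha,\beta}$: carry out the \emph{same} construction centered at the right boundary to produce $\tilde{p}\left(x,y,t\right)=\lim_{I\searrow0}p_{I}^{(R)}\left(x,y,t\right)$, and check via the probabilistic identity (\ref{eq:prob interpretation of p two-sided bdry case}) — which holds for both constructions and uniquely characterizes the density of $X\left(x,t\right)$ on $\left\{ t<\zeta_{0,1}^{X}\left(x\right)\right\} $ — that $\tilde{p}\equiv p$. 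The boundary vanishing at $1$ then follows from the analogue of the left-side argument: use (\ref{eq:prob estimate for hitting time of X two-sided bdry case}) (with the roles of $0$ and $1$ exchanged) to get $\mathbb{P}\left(\zeta_{1}^{X}\left(x\right)\leq t\right)\to1$ as $x\nearrow1$, so that $u_{f}\left(x,t\right)\leq\left\Vert f\right\Vert _{u}\mathbb{P}\left(t<\zeta_{0,1}^{X}\left(x\right)\right)\to0$. A subtlety here is that uniqueness of (\ref{eq:SDE two-sided bdry case}) is not proven a priori, so one must work with the strong Markov selection $\left\{ X\left(x,t\right)\right\} $ obtained from $\mathsection12$ of \cite{PDEStroock}; however, this is enough because all probabilistic identities used refer to this fixed selection, and the resulting $p\left(x,y,t\right)$ is characterized analytically as the smooth fundamental solution to (\ref{eq: IVP 2sided boundary}).
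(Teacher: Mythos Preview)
Your approach is correct and is exactly what the paper intends: the preamble to the proposition states ``Let us rewrite the results in Theorem \ref{thm:main theorem} for $p\left(x,y,t\right)$ found above,'' and the paper's actual proof is just a few lines long, treating every claim except one as an immediate consequence of the arguments already carried out in $\mathsection4$ (applied with $a(x)=(1-x)^\beta$, $b\equiv 0$, and the right boundary at $1$ instead of $\infty$). Your detailed recapitulation of those arguments---series convergence via (\ref{eq:estimate for p_I}) and the geometric bound on $\mathbb{P}\left(\eta_{2n}(x)<\zeta_{0,1}^{X}(x)\right)$, symmetry via (\ref{eq:symmetry of p_I}) and the computation (\ref{eq:symmetry factor for p}), the distributional-to-classical upgrade by hypoellipticity, and the use of the right-boundary construction $p^{(R)}$ to handle $x\nearrow 1$---matches the paper's implicit reasoning and the explicit discussion surrounding (\ref{eq:def of p two-sided bdry case right bdry}).

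The one point the paper singles out, and which your proposal does not quite cover, is \emph{joint} smoothness of $p$ on $(0,1)^{2}\times(0,\infty)$. Theorem \ref{thm:main theorem} only gives separate smoothness of $(x,t)\mapsto p(x,y,t)$ for each fixed $y$ and of $(y,t)\mapsto p(x,y,t)$ for each fixed $x$; your hypoellipticity argument, applied to $u_f$ and then transferred via (\ref{eq:symmetry of p two-sided}), reproduces exactly these two statements but not the existence of mixed $x$--$y$ derivatives. The paper's proof fills this gap with the observation that since $(y,t)\mapsto p(x,y,t)$ is smooth and $(x,t)\mapsto p(x,y,t)$ solves $(\partial_t-L_{\alpha,\beta})p=0$, one can differentiate the backward equation in $y$ and bootstrap to obtain all mixed partials. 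You should add this short step.
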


\begin{proof}
The only thing that requires proof is the smoothness of $p\left(x,y,t\right)$
on $\left(0,1\right)^{2}\times\left(0,\infty\right)$. By Theorem
\ref{thm:main theorem}, we know that $\left(y,t\right)\mapsto p\left(x,y,t\right)$
is smooth, and at the same time $\left(x,t\right)\mapsto p\left(x,y,t\right)$
solves the equation $\left(\partial_{t}-L_{\alpha,\beta}\right)p\left(x,y,t\right)=0$.
It is easy to see from here that $p\left(x,y,t\right)$ has all the
partial derivatives in $\left(x,y,t\right)$ of all orders. 
\end{proof}
The proposition above also leads to the wellposedness of the stochastic
differential equation associated with $L_{\alpha,\beta}$.
\begin{cor}
The stochastic differential equation (\ref{eq:SDE two-sided bdry case})
is well posed for every $x\in\left(0,1\right)$ up to the hitting
time at either $0$ or $1$ in the sense that if $\left\{ \tilde{X}\left(x,t\right):t\geq0\right\} $
is another solution to (\ref{eq:SDE two-sided bdry case}), then the
distribution of $X\left(x,t\right)$ conditioning on $t<\zeta_{0,1}^{X}\left(x\right)$
is identical with that of $\tilde{X}\left(x,t\right)$ given $t<\zeta_{0,1}^{\tilde{X}}\left(x\right)$.
\end{cor}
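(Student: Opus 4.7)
The plan is to identify the one-dimensional marginals of any weak solution to \eqref{eq:SDE two-sided bdry case} with those of the strong Markov solution $\{X(x,t):t\ge 0\}$ used to construct $p(x,y,t)$, by testing against the family of smooth functions $u_f$ that solve the backward Kolmogorov equation with zero boundary conditions. Fix $x\in(0,1)$ and let $\{\tilde X(x,t):t\ge 0\}$ be any other weak solution to \eqref{eq:SDE two-sided bdry case}, defined on some filtered probability space $(\tilde\Omega,\tilde{\mathcal F},\{\tilde{\mathcal F}_t\},\tilde{\mathbb P})$ with driving Brownian motion $\tilde B$. For $f\in C_c((0,1))$ with $\mathrm{supp}(f)\subseteq[a,b]\subset(0,1)$, let $u_f$ be defined by \eqref{eq:def of u_f two-sided bdry case}. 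By Proposition~\ref{prop:property of p in two-sided bdry case}, $u_f$ is smooth on $(0,1)\times(0,\infty)$, solves $\partial_t u_f=L_{\alpha,\beta}u_f$, satisfies $\lim_{t\searrow 0}u_f(\cdot,t)=f$, and vanishes on both boundaries. Combining the probabilistic interpretation \eqref{eq:prob interpretation of u_f two-sided bdry case} with the explicit hitting identity $\mathbb P(\zeta_a^X(x)<\zeta_0^X(x))=x/a$ from \eqref{eq:prob estimate for hitting time of X two-sided bdry case} (and its analog at $1$), we obtain the quantitative majorant
\[
|u_f(y,s)|\le\|f\|_u\bigl((y/a)\wedge 1\bigr)\bigl(((1-y)/(1-b))\wedge 1\bigr)\text{ for every }(y,s)\in(0,1)\times(0,\infty),
\]
so $u_f$ is bounded and vanishes as $y\to 0$ or $y\to 1$ \emph{uniformly} in $s>0$.

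Next, apply It\^o's formula to $M_s:=u_f(\tilde X(x,s),t-s)$ on $s\in[0,t)$. Since $u_f$ is smooth in the interior and solves the backward equation, the finite-variation part cancels identically, giving
\[
dM_s=\partial_x u_f(\tilde X(x,s),t-s)\sqrt{2\tilde X^\alpha(x,s)(1-\tilde X(x,s))^\beta}\,d\tilde B(s).
\]
Setting $\zeta:=t\wedge\zeta_{0,1}^{\tilde X}(x)$, the process $\{M_{s\wedge\zeta}:s\in[0,t]\}$ is a bounded local martingale, hence a bounded martingale, so $\tilde{\mathbb E}[M_\zeta]=M_0=u_f(x,t)$. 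On the event $\{\zeta_{0,1}^{\tilde X}(x)>t\}$, $M_\zeta=u_f(\tilde X(x,t),0)=f(\tilde X(x,t))$; on $\{\zeta_{0,1}^{\tilde X}(x)\le t\}$, the path-continuity of $\tilde X$ forces $\tilde X(x,\zeta)\in\{0,1\}$, and the uniform boundary vanishing of $u_f$ (together with $f(0)=f(1)=0$ in the borderline case $\zeta=t$) gives $M_\zeta=0$. Hence
\[
u_f(x,t)=\tilde{\mathbb E}\bigl[f(\tilde X(x,t));\,t<\zeta_{0,1}^{\tilde X}(x)\bigr].
\]

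Comparing with \eqref{eq:prob interpretation of u_f two-sided bdry case} applied to the original $X$, we obtain $\mathbb E[f(X(x,t));\,t<\zeta_{0,1}^X(x)]=\tilde{\mathbb E}[f(\tilde X(x,t));\,t<\zeta_{0,1}^{\tilde X}(x)]$ for every $f\in C_c((0,1))$. Since $C_c((0,1))$ is measure-determining on the Borel $\sigma$-algebra of $(0,1)$, the sub-probability measures $A\mapsto\mathbb P(X(x,t)\in A,t<\zeta_{0,1}^X(x))$ and $A\mapsto\tilde{\mathbb P}(\tilde X(x,t)\in A,t<\zeta_{0,1}^{\tilde X}(x))$ coincide. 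Taking $A=(0,1)$ shows that the two conditioning probabilities agree, and dividing yields the stated equality of conditional laws.

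The delicate step is the identification $M_\zeta=0$ on $\{\zeta_{0,1}^{\tilde X}(x)\le t\}$. The boundary condition built into \eqref{eq: IVP 2sided boundary} only asserts that $u_f(y,s)\to 0$ as $y\to 0$ or $1$ for each \emph{fixed} $s>0$, which is not by itself strong enough to handle $(\tilde X(x,s_n),t-s_n)$ tending to the boundary along an arbitrary sequence $s_n\nearrow\zeta$. What rescues the argument is the pointwise majorant displayed above, which follows for free from $X$ being a bounded $(0,1)$-valued martingale and from $f$ being supported away from $\{0,1\}$; this upgrades the boundary vanishing of $u_f$ to control that is uniform in the time variable, and it is what lets optional stopping close cleanly.
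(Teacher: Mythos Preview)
Your argument is correct and follows the same route as the paper: test against the Kolmogorov backward solutions $u_f$, use It\^o's formula to see that $s\mapsto u_f(\tilde X(x,s),t-s)$ is a bounded martingale up to the exit time, and equate the expectations at $s=0$ and $s=t$. The paper's proof is terser and leaves implicit the point you make explicit---that the majorant $|u_f(y,s)|\le\|f\|_u\,(y/a\wedge1)((1-y)/(1-b)\wedge1)$ gives boundary vanishing uniform in $s$, which is what makes the identification $M_\zeta=0$ on $\{\zeta_{0,1}^{\tilde X}(x)\le t\}$ rigorous.
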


\begin{proof}
It is sufficient to observe that, for every $f\in C_{c}\left(\left(0,1\right)\right)$,
if $u_{f}\left(x,t\right)$ is defined as in (\ref{eq:def of u_f two-sided bdry case}),
then $\left\{ u_{f}\left(\tilde{X}\left(x,s\right),t-s\right):s\in\left[0,t\right]\right\} $
is a martingale, which, by (\ref{eq:prob interpretation of u_f two-sided bdry case}),
implies that 
\[
\mathbb{E}\left[f\left(X\left(x,t\right)\right);t<\zeta_{0,1}^{X}\left(x\right)\right]=u_{f}\left(x,t\right)=\mathbb{E}\left[f\left(\tilde{X}\left(x,t\right)\right);t<\zeta_{0,1}^{\tilde{X}}\left(x\right)\right].
\]
\end{proof}
Next we briefly discuss the other way of constructing $p\left(x,y,t\right)$,
which is to start with the localization of (\ref{eq: IVP 2sided boundary})
in a neighborhood of the right boundary $1$. It is easy to see that,
by exchanging $x$ and $1-x$, and at the same time exchanging $\alpha$
and $\beta$, we can follow the same steps as above to develop another
construction of the fundamental solution to (\ref{eq: IVP 2sided boundary}).
We will not repeat the details but only specify quantities and functions
that are necessary for the statement of the results. For example,
in this case the transformation is given by 
\[
z=\phi^{(R)}\left(x\right)=\frac{1}{4}\left(b^{(R)}\left(x\right)\right)^{2}\text{ where }b^{(R)}\left(x\right):=\int_{x}^{1}s^{-\alpha/2}\left(1-s\right)^{-\beta/2}ds;
\]
$q^{\left(R\right)}\left(z,w,t\right)$ is the fundamental solution
to the model equation with $\nu^{(R)}=\frac{1-\beta}{2-\beta}$, and
given $I\in\left(0,1\right)$, $q_{\phi^{(R)}\left(I\right)}^{(R)}\left(z,w,t\right)$
is the fundamental solution to the localization of the model equation
on $\left(0,\phi^{(R)}\left(I\right)\right)$; furthermore, we have
that
\[
\Theta\left(\phi^{(R)}\left(x\right)\right)=\frac{x^{\frac{\alpha}{4}}\left(1-x\right)^{\frac{\beta}{4}}}{\left(b^{(R)}\left(x\right)\right)^{\frac{\beta}{2\left(2-\beta\right)}}}\text{ with }\Theta_{I}^{(R)}=\left(\frac{2}{2-\beta}\right)^{\frac{\beta}{2\left(2-\beta\right)}}I^{-\frac{\alpha}{2\left(2-\beta\right)}},
\]
and for every $x\in\left(I,1\right)$,
\[
\left|V\left(\phi^{(R)}\left(x\right)\right)\right|\leq V_{I}^{(R)}\left(\phi^{(R)}\left(x\right)\right)^{-\frac{1-\beta}{2-\beta}}\text{ with }V_{I}^{(R)}=\frac{\alpha}{16}\left(4-\alpha+2\beta\right)I^{\frac{\alpha}{2-\beta}-2};
\]
we construct $q_{\phi^{(R)}\left(I\right)}^{V,(R)}\left(z,w,t\right)$
from $q_{\phi^{(R)}\left(I\right)}^{(R)}\left(z,w,t\right)$ via Duhamel's
method, and obtain the fundamental solution to (\ref{eq: IVP 2sided boundary})
localized on $\left(I,1\right)$ as
\[
p_{I}^{(R)}\left(x,y,t\right)=q_{\phi^{(R)}\left(I\right)}^{V,(R)}\left(\phi^{(R)}\left(x\right),\phi^{(R)}\left(y\right),t\right)\frac{x^{\frac{\alpha}{4}}\left(1-x\right)^{\frac{\beta}{4}}}{2y^{\frac{3\alpha}{4}}\left(1-y\right)^{\frac{3\beta}{4}}}\frac{\left(b^{(R)}\left(y\right)\right)^{\frac{4-\beta}{2\left(2-\beta\right)}}}{\left(b^{(R)}\left(x\right)\right)^{\frac{\beta}{2\left(2-\beta\right)}}};
\]
finally, if $\left\{ \tilde{\eta}_{n}\left(x\right):n\in\mathbb{N}\right\} $
is the sequence of hitting times that records the upward crossings
of $X\left(x,t\right)$ from $I$ to $H$. Then, (\ref{eq:prob estimate for hitting time of X two-sided bdry case})
and the strong Markov property of $\left\{ X\left(x,t\right):t\geq0\right\} $
are sufficient for us to obtain another version of the fundamental
solution, denoted by $\tilde{p}\left(x,y,t\right)$ temporarily, as
\begin{equation}
\begin{split}\tilde{p}\left(x,y,t\right) & =\lim_{G\searrow0}p_{G}^{(R)}\left(x,y,t\right)\\
 & =p_{I}^{(R)}\left(x,y,t\right)+\sum_{n=1}^{\infty}\mathbb{E}\left[p_{I}^{(R)}\left(H,y,t-\tilde{\eta}_{2n}\left(x\right)\right);\tilde{\eta}_{2n}\left(x\right)\leq t,\tilde{\eta}_{2n}\left(x\right)<\zeta_{0,1}^{X}\left(x\right)\right].
\end{split}
\label{eq:def of p two-sided bdry case right bdry}
\end{equation}
for $\left(x,y,t\right)\in\left(H,1\right)^{2}\times\left(0,\infty\right)$.
It is easy to see that $\tilde{p}\left(x,y,t\right)$ also satisfies
(\ref{eq:symmetry of p two-sided}), (\ref{eq:prob interpretation of p two-sided bdry case})
and (\ref{eq:CK equation for p two-sided}), which implies that $\tilde{p}\left(x,y,t\right)=p\left(x,y,t\right)$
almost everywhere on $\left(0,1\right)^{2}\times\left(0,\infty\right)$,
i.e., the two constructions of the fundamental solution to (\ref{eq: IVP 2sided boundary})
are consistent and $p\left(x,y,t\right)$ satisfies both (\ref{eq:def of p two-sided bdry case left bdry})
and (\ref{eq:def of p two-sided bdry case right bdry}). 

Depending on near which boundary we are conducting our analysis, we
can choose either (\ref{eq:def of p two-sided bdry case left bdry})
or (\ref{eq:def of p two-sided bdry case right bdry}) as the definition
of $p\left(x,y,t\right)$. For example, when both $x$ and $y$ are
close to one of the boundaries, we can develop approximations for
$p\left(x,y,t\right)$ similarly as in $\mathsection4.2$. 
\begin{cor}
\label{cor: approx for p(x,y,t) two-sided bdry case}For $\left(x,y,t\right)\in\left(0,1\right)^{2}\times\left(0,\infty\right)$,
set
\[
p^{(L)-approx.}\left(x,y,t\right):=q^{(L)}\left(\phi^{(L)}\left(x\right),\phi^{(L)}\left(y\right),t\right)\frac{x^{\frac{\alpha}{4}}\left(1-x\right)^{\frac{\beta}{4}}}{2y^{\frac{3\alpha}{4}}\left(1-y\right)^{\frac{3\beta}{4}}}\frac{\left(b^{(L)}\left(y\right)\right)^{\frac{4-\alpha}{2\left(2-\alpha\right)}}}{\left(b^{(L)}\left(x\right)\right)^{\frac{\alpha}{2\left(2-\alpha\right)}}}
\]
and 
\[
p^{(R)-approx.}\left(x,y,t\right)=q^{(R)}\left(\phi^{(R)}\left(x\right),\phi^{(R)}\left(y\right),t\right)\frac{x^{\frac{\alpha}{4}}\left(1-x\right)^{\frac{\beta}{4}}}{2y^{\frac{3\alpha}{4}}\left(1-y\right)^{\frac{3\beta}{4}}}\frac{\left(b^{(R)}\left(y\right)\right)^{\frac{4-\beta}{2\left(2-\beta\right)}}}{\left(b^{(R)}\left(x\right)\right)^{\frac{\beta}{2\left(2-\beta\right)}}}.
\]
Let $M^{(L)}\left(t\right)$ and, respectively, $M^{(R)}\left(t\right)$
be defined as in (\ref{eq:def of m_n(t)}) with $\nu=\nu^{(L)}$,
$V_{I}=V_{I}^{(L)}$ and, respectively, $\nu=\nu^{(R)}$, $V_{I}=V_{I}^{(R)}$.
Fix $0<G<I<H<1$, and set 
\[
t^{(L)}:=\left(\frac{4\left(2-\alpha\right)}{9\left(3-\alpha\right)}\phi^{(L)}\left(G\right)\right)\wedge\frac{\left(I-G\right)^{2}}{4M_{\alpha,\beta}}\text{ and }t^{(R)}:=\left(\frac{4\left(2-\beta\right)}{9\left(3-\beta\right)}\phi^{(R)}\left(H\right)\right)\wedge\frac{\left(H-I\right)^{2}}{4M_{\alpha,\beta}}.
\]
Then, for every $t\in\left(0,t^{(L)}\right)$ and $\left(x,y\right)\in\left(0,G\right)^{2}$
such that $\phi^{(L)}\left(x\right)\vee\phi^{(L)}\left(y\right)\leq\frac{1}{9}\phi^{(L)}\left(G\right)$,
\begin{align*}
\left|\frac{p\left(x,y,t\right)}{p^{(L)-approx.}\left(x,y,t\right)}-1\right| & \leq M^{(L)}\left(t\right)-1\\
 & \hspace{0.5cm}+\left[1+\left(\frac{2}{2-\alpha}\right)^{\frac{\alpha}{2-\alpha}}\frac{M^{(L)}\left(t\right)}{\left(1-G\right)^{\frac{2\beta}{2-\alpha}}}\left(\frac{G}{1-G}\wedge1\right)\right]\exp\left(-\frac{2\phi^{(L)}\left(G\right)}{9t}\right).
\end{align*}
Similarly, for every $t\in\left(0,t^{(R)}\right)$ and $\left(x,y\right)\in\left(H,1\right)^{2}$
such that $\phi^{(R)}\left(x\right)\vee\phi^{(R)}\left(y\right)\leq\frac{1}{9}\phi^{(R)}\left(H\right)$,
\begin{align*}
\left|\frac{p\left(x,y,t\right)}{p^{(R)-approx.}\left(x,y,t\right)}-1\right| & \leq M^{(R)}\left(t\right)-1\\
 & \hspace{0.5cm}+\left[1+\left(\frac{2}{2-\beta}\right)^{\frac{\beta}{2-\beta}}\frac{M^{(R)}\left(t\right)}{H^{\frac{2\alpha}{2-\beta}}}\left(\frac{1-H}{H}\wedge1\right)\right]\exp\left(-\frac{2\phi^{(R)}\left(H\right)}{9t}\right).
\end{align*}
\end{cor}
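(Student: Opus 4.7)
The plan is to mirror the strategy of Theorem~\ref{thm:approximation of p(x,y,t) } combined with Corollary \ref{cor:approx for p(x,y,t) variation 2}, transplanted to the two-sided Wright--Fisher setting. By the symmetry between the two constructions (\ref{eq:def of p two-sided bdry case left bdry}) and (\ref{eq:def of p two-sided bdry case right bdry}), swapping $x \leftrightarrow 1-x$ and $\alpha \leftrightarrow \beta$ reduces the right-boundary estimate to the left-boundary one, so I focus on proving the bound involving $p^{(L)-\text{approx.}}$.

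First I would decompose
\[
\left|\frac{p(x,y,t)}{p^{(L)-\text{approx.}}(x,y,t)} - 1\right| \leq \left|\frac{p_I^{(L)}(x,y,t)}{p^{(L)-\text{approx.}}(x,y,t)} - 1\right| + \left|\frac{p(x,y,t) - p_I^{(L)}(x,y,t)}{p^{(L)-\text{approx.}}(x,y,t)}\right|.
\]
The first summand is controlled directly by Corollary \ref{cor:approximation for p_I} taken at $k=1$: substituting $\nu = \nu^{(L)}$ (so that $2-\nu^{(L)} = (3-\alpha)/(2-\alpha)$, matching the first piece of $t^{(L)}$) and using $D_1(t) = 1$ gives the contribution $M^{(L)}(t) - 1 + \exp(-2\phi^{(L)}(G)/(9t))$, which accounts for the first term plus the ``$1\cdot\exp(\cdot)$'' appearing in the stated bracket.

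For the tail $p - p_I^{(L)}$, I would follow the proof of Corollary \ref{cor:approx for p(x,y,t) variation 2} based on the representation (\ref{eq:def of p two-sided bdry case left bdry}). The constraint $t \leq (I-G)^2/(4M_{\alpha,\beta})$ (the second piece of $t^{(L)}$) together with (\ref{eq:prob estimate for hitting time of X two-sided bdry case}) gives $\mathbb{P}(\zeta_I^X(G) \leq t) \leq \exp(-(I-G)^2/(4M_{\alpha,\beta}t))$. The strong Markov property applied to the excursion sequence $\{\eta_n(x)\}$ then yields a geometric upper bound for $\mathbb{P}(\eta_{2n}(x) \leq t,\ \eta_{2n}(x) < \zeta_{0,1}^X(x))$. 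Uniform control on $p_I^{(L)}(G,y,s)$ via (\ref{eq:exp estimate for q^V_J}), (\ref{eq: estimate of q}), and the explicit formulas for $\Theta^{(L)}$ and $(\phi^{(L)})'$ stated in $\mathsection 5$ allows one to sum the series; after dividing by $p^{(L)-\text{approx.}}(x,y,t)$, the $y$-dependence cancels and one obtains the remaining bracketed contribution times $\exp(-2\phi^{(L)}(G)/(9t))$.

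The main obstacle will be extracting the precise prefactor $\bigl(\frac{2}{2-\alpha}\bigr)^{\alpha/(2-\alpha)}(1-G)^{-2\beta/(2-\alpha)}\bigl(G/(1-G)\wedge 1\bigr)$ in the stated form. The first two factors come from the explicit formula for $\Theta^{(L)}$ at $G$ and the cancellation of $y$-dependence after dividing by $p^{(L)-\text{approx.}}$. The ratio $G/(1-G)$ is the analog of $\mathfrak{p}_G/(1-\mathfrak{p}_G)$ from Corollary \ref{cor:approx for p(x,y,t) variation 1}: since $X(x,t)$ is itself a martingale, the scale function is $S(x) = x$, and the escape probability to the opposite boundary $1$ is $\mathfrak{p}_G = \lim_{I\nearrow 1}(G/I) = G$. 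The cap $\wedge 1$ handles the case $G > 1/2$, where the trivial bound $\mathbb{P}(\eta_{2n}(x)\leq t,\ \eta_{2n}(x) < \zeta_{0,1}^X(x)) \leq 1$ is sharper than $G/(1-G)$. The right-boundary estimate follows from the identical argument with $\nu^{(L)}, V_I^{(L)}, \Theta^{(L)}, \phi^{(L)}$ and $\{\eta_n(x)\}$ replaced by their right-boundary counterparts $\nu^{(R)}, V_I^{(R)}, \Theta^{(R)}, \phi^{(R)}$ and $\{\tilde{\eta}_n(x)\}$.
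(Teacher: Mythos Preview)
Your approach is essentially the same as the paper's: decompose into the $p_I^{(L)}$ term handled by Corollary~\ref{cor:approximation for p_I} with $k=1$, and the tail $p-p_I^{(L)}$ handled by combining the proofs of Corollaries~\ref{cor:approx for p(x,y,t) variation 1} and~\ref{cor:approx for p(x,y,t) variation 2} (the former giving the factor $G/(1-G)$ via $S(x)=x$, the latter giving the cap $1$ via the time-restricted hitting probability and the second piece of $t^{(L)}$). One small correction: the ``$\wedge 1$'' does not come from the trivial bound $\mathbb{P}(\cdot)\le 1$ on a single term (which would not sum), but from the geometric series $\sum_{n\ge 1}\mathbb{P}(\zeta_I^X(G)\le t)^n$ being bounded by $1$ under the constraint $t\le (I-G)^2/(4M_{\alpha,\beta})$, exactly as in the derivation preceding Corollary~\ref{cor:approx for p(x,y,t) variation 2}.
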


\begin{proof}
We only need to look at the statement involving $p^{(L)-approx.}\left(x,y,t\right)$.
There is not much to be done since a similar estimate (\ref{eq:estimate p/p^k-approx})
has been proven in Theorem \ref{thm:approximation of p(x,y,t) }.
We notice that $t^{(L)}$ is chosen such that the function $s\mapsto s^{\nu-2}\exp\left(-\frac{4\phi^{(L)}\left(G\right)}{9s}\right)$
is increasing on $\left(0,t^{(L)}\right)$, and $\left(I-G\right)^{2}\geq4M_{\alpha,\beta}t^{(L)}$.
Furthermore, in this case $S\left(x\right)=x$ for every $x\in\left(0,1\right)$,
and hence $\mathfrak{p}_{G}=G$. Combining the proof of Corollary
\ref{cor:approx for p(x,y,t) variation 1}, Corollary \ref{cor:approx for p(x,y,t) variation 2},
as well as (\ref{eq:Theta_I(L) two-sided bdry case}) and (\ref{eq:range of phi^(L)})
in the Appendix, we get that for every $\left(x,y,t\right)\in\left(0,G\right)^{2}\times\left(0,t^{(L)}\right)$
as described in the statement, 
\begin{align*}
\left|\frac{p\left(x,y,t\right)-p_{I}^{(L)}\left(x,y,t\right)}{p^{(L)-approx.}\left(x,y,t\right)}\right| & \leq M^{(L)}\left(t\right)\frac{\Theta\left(\phi^{(L)}\left(G\right)\right)}{\Theta\left(\phi^{(L)}\left(x\right)\right)}\left(\frac{\phi^{(L)}\left(G\right)}{\phi^{(L)}\left(x\right)}\right)^{\frac{1}{2-\alpha}}\frac{x}{G}\exp\left(-\frac{2\phi^{(L)}\left(G\right)}{9t}\right)\frac{G}{1-G}\\
 & \le\left(\frac{2}{2-\alpha}\right)^{\frac{\alpha}{2-\alpha}}\left(1-G\right)^{-\frac{2\beta}{2-\alpha}}M^{(L)}\left(t\right)\exp\left(-\frac{2\phi^{(L)}\left(G\right)}{9t}\right)\left(\frac{G}{1-G}\wedge1\right).
\end{align*}
\end{proof}

\section{Appendix}

This Appendix contains detailed derivations involving $\Theta\left(z\right)$
and $V\left(z\right)$ for $z\in\left(0,J\right)$. Assuming that
$J=\phi\left(I\right)$, it is sufficient for us to look at $\Theta\left(\phi\left(x\right)\right)$
and $V\left(\phi\left(x\right)\right)$ for $x\in\left(0,I\right)$,
where the notations become simpler. Recall that 
\[
a_{I}:=\max_{x\in\left[0,I\right]}\left\{ \frac{1}{a\left(x\right)},a\left(x\right)\right\} \text{ and }b_{I}:=\max_{x\in\left[0,I\right]}\left|b\left(x\right)\right|.
\]
We also introduce two more notations:
\[
a_{I}^{\prime}:=\max_{x\in\left[0,I\right]}\left|a^{\prime}\left(x\right)\right|\text{ and }b_{I}^{\prime}:=\max_{x\in\left[0,I\right]}\left|b^{\prime}\left(x\right)\right|.
\]

According to (\ref{eq:def of phi =000026 theta}) and (\ref{eq:def of Theta}),
we have that for every $x\in\left(0,I\right)$,
\[
\begin{split}\Theta\left(\phi\left(x\right)\right) & =\exp\left(-\int_{0}^{x}\frac{\theta(w)}{2\phi\left(w\right)}\phi^{\prime}\left(w\right)dw\right)\\
 & =\exp\left(-\int_{0}^{x}\left(\frac{\frac{1}{2}-\nu}{2\sqrt{\phi\left(w\right)w^{\alpha}a\left(w\right)}}-\frac{\left(w^{\alpha}a\left(w\right)\right)^{\prime}}{4w^{\alpha}a\left(w\right)}+\frac{b\left(w\right)}{2w^{\alpha}a\left(w\right)}\right)dw\right).
\end{split}
\]
Notice that 
\[
\frac{\frac{1}{2}-\nu}{2\sqrt{\phi\left(w\right)w^{\alpha}a\left(w\right)}}-\frac{\left(w^{\alpha}a\left(w\right)\right)^{\prime}}{4w^{\alpha}a\left(w\right)}=\left(\ln\frac{\left(2\sqrt{\phi\left(w\right)}\right)^{\frac{1}{2}-\nu}}{\left(w^{\alpha}a\left(w\right)\right)^{\frac{1}{4}}}\right)^{\prime},
\]
and further, if $\alpha=1$, then 
\[
\frac{b\left(w\right)}{2wa\left(w\right)}=\left(\ln\left(w^{\frac{b\left(0\right)}{2}}\right)\right)^{\prime}+\frac{1}{2w}\left(\frac{b\left(w\right)}{a\left(w\right)}-b\left(0\right)\right).
\]
Plugging these two expressions back into the right hand side of $\Theta\left(\phi\left(x\right)\right)$
leads to 
\[
\Theta\left(\phi\left(x\right)\right)=\begin{cases}
x^{\frac{\alpha}{4}}\left(4\phi\left(x\right)\right)^{-\frac{\alpha}{4\left(2-\alpha\right)}}\left(a\left(x\right)\right)^{\frac{1}{4}}\exp\left(-\int_{0}^{x}\frac{b\left(w\right)}{2w^{\alpha}a\left(w\right)}dw\right) & \text{ if }\alpha\neq1,\\
\left(\frac{x}{4\phi\left(x\right)}\right)^{\frac{1}{4}-\frac{b\left(0\right)}{2}}\left(a\left(x\right)\right)^{\frac{1}{4}}\exp\left(-\int_{0}^{x}\frac{1}{2w}\left(\frac{b\left(w\right)}{a\left(w\right)}-b\left(0\right)\right)dw\right) & \text{ if }\alpha=1,
\end{cases}
\]
which is exactly (\ref{eq:formula of Theta}). Given \textbf{(H1)
}and \textbf{(H2)}, the integral in the exponential function above
is well defined in both cases (when $\alpha=1$ and $\alpha\neq1$). 

With the notations introduced above, we have that when $\alpha\neq1$,
for every $x\in\left(0,I\right)$,
\[
\left(1-\frac{\alpha}{2}\right)^{\frac{\alpha}{2\left(2-\alpha\right)}}a_{I}^{-\frac{1}{2\left(2-\alpha\right)}}\leq\frac{x^{\frac{\alpha}{4}}\left(a\left(x\right)\right)^{\frac{1}{4}}}{\left(4\phi\left(x\right)\right)^{\frac{\alpha}{4\left(2-\alpha\right)}}}\leq\left(1-\frac{\alpha}{2}\right)^{\frac{\alpha}{2\left(2-\alpha\right)}}a_{I}^{\frac{1}{2\left(2-\alpha\right)}}
\]
and 
\[
\exp\left(\int_{0}^{x}\frac{\left|b\left(w\right)\right|}{2w^{\alpha}a\left(w\right)}dw\right)\leq\mathbb{I}_{\left(0,1\right)}\left(\alpha\right)\cdot e^{\frac{a_{I}b_{I}I^{1-\alpha}}{2\left(1-\alpha\right)}}+\mathbb{I}_{\left(1,2\right)}\left(\alpha\right)\cdot e^{\frac{a_{I}b_{I}^{\prime}I^{2-\alpha}}{2\left(2-\alpha\right)}};
\]
when $\alpha=1$, for every $x\in\left(0,I\right)$,
\[
2^{b\left(0\right)-\frac{1}{2}}a_{I}^{\frac{1}{2}b\left(0\right)-\frac{1}{2}}\leq\left(\frac{x}{4\phi\left(x\right)}\right)^{\frac{1}{4}-\frac{b\left(0\right)}{2}}\left(a\left(x\right)\right)^{\frac{1}{4}}\leq2^{b\left(0\right)-\frac{1}{2}}a_{I}^{\frac{1}{2}-\frac{1}{2}b\left(0\right)}
\]
and 
\[
\exp\left(\int_{0}^{x}\frac{1}{2w}\left|\frac{b\left(w\right)}{a\left(w\right)}-b\left(0\right)\right|dw\right)\leq e^{\frac{1}{2}a_{I}^{2}\left(a_{I}b_{I}^{\prime}+a_{I}^{\prime}b_{I}\right)I}.
\]
Hence, if we set
\[
A_{I}:=\mathbb{I}_{\left(0,1\right)}\left(\alpha\right)\cdot e^{\frac{a_{I}b_{I}I^{1-\alpha}}{2\left(1-\alpha\right)}}+\mathbb{I}_{\left\{ 1\right\} }\left(\alpha\right)\cdot e^{\frac{1}{2}a_{I}^{2}\left(a_{I}b_{I}^{\prime}+a_{I}^{\prime}b_{I}\right)I}+\mathbb{I}_{\left(1,2\right)}\left(\alpha\right)\cdot e^{\frac{a_{I}b_{I}^{\prime}I^{2-\alpha}}{2\left(2-\alpha\right)}}.
\]
then for every $x\in\left(0,I\right)$,
\[
\left(1-\frac{\alpha}{2}\right)^{\frac{1}{2}-\nu}a_{I}^{-\frac{1-\nu}{2}}A_{I}^{-1}\leq\Theta\left(\phi\left(x\right)\right)\leq\left(1-\frac{\alpha}{2}\right)^{\frac{1}{2}-\nu}a_{I}^{\frac{1-\nu}{2}}A_{I}.
\]
(\ref{eq:bound Theta_J}) follows from here by setting 
\begin{equation}
\Theta_{I}:=\left(\left(1-\frac{\alpha}{2}\right)^{\nu-\frac{1}{2}}\vee\sqrt{2}\right)a_{I}^{\frac{1-\nu}{2}}A_{I}.\label{eq:formula of Theta_J}
\end{equation}
For every $x,y\in\left(0,I\right)$, we can follow the arguments above
to get that
\[
a_{I}^{-\frac{1-\nu}{2}}A_{I}^{-1}\leq\frac{\Theta\left(\phi\left(x\right)\right)}{\Theta\left(\phi\left(y\right)\right)}\leq a_{I}^{\frac{1-\nu}{2}}A_{I}.
\]
Moreover, if $S\left(x\right)$ is as defined in (\ref{eq:def of S(x)}),
then
\[
S\left(x\right)=2^{2\nu-1}\int_{0}^{x}\exp\left(-\int_{0}^{u}\frac{b\left(w\right)}{w^{\alpha}a\left(w\right)}dw\right)du.
\]
In addition, from (\ref{eq:formula of Theta(phi)}), we can easily
derive that, for every $x,y\in\left(0,I\right)$,
\begin{equation}
\begin{split}\frac{\Theta\left(\phi\left(x\right)\right)}{\Theta\left(\phi\left(y\right)\right)}\phi^{\prime}\left(y\right) & =\left(\frac{\phi\left(y\right)}{\phi\left(x\right)}\right)^{\frac{1}{4}-\frac{\nu}{2}}\frac{\phi^{\frac{1}{2}}\left(y\right)x^{\frac{\alpha}{4}}a^{\frac{1}{4}}\left(x\right)}{y^{\frac{3\alpha}{4}}a^{\frac{3}{4}}\left(y\right)}\exp\left(-\int_{y}^{x}\frac{b\left(w\right)}{2w^{\alpha}a\left(w\right)}dw\right),\end{split}
\label{eq:transformation factor between q^V and p}
\end{equation}
and 
\begin{equation}
\begin{split}\frac{\left(\phi\left(x\right)\right)^{1-\nu}}{\phi^{\prime}\left(x\right)}\Theta^{2}\left(\phi\left(x\right)\right) & =\frac{\left(\phi\left(x\right)\right)^{\frac{1}{2}-\nu}x^{\alpha}a\left(x\right)}{\left(4\phi\left(x\right)\right)^{\frac{\alpha}{2\left(2-\alpha\right)}}}\exp\left(-\int_{0}^{x}\frac{b\left(w\right)}{w^{\alpha}a\left(w\right)}dw\right)\\
 & =2^{2\nu-1}x^{\alpha}a\left(x\right)\exp\left(-\int_{0}^{x}\frac{b\left(w\right)}{w^{\alpha}a\left(w\right)}dw\right).
\end{split}
\label{eq:symmetry factor for p}
\end{equation}

Now we move onto $V\left(z\right)$ and recall from (\ref{eq:formula of V(phi)})
that for every $x\in\left(0,I\right)$,
\[
V\left(\phi\left(x\right)\right)=\frac{\theta\left(x\right)}{4\phi\left(x\right)}\left(-\theta\left(x\right)+2-2\nu\right)-\frac{\theta^{\prime}\left(x\right)}{2\phi^{\prime}\left(x\right)}\text{ for every }x\in\left(0,I\right).
\]
By (\ref{eq:def of phi =000026 theta}), the choice of $\nu$ and
\textbf{(H1)} and \textbf{(H2)}, it is straightforward to verify that
when $x\in\left(0,I\right)$,
\[
\phi\left(x\right)=\frac{x^{2-\alpha}}{\left(2-\alpha\right)^{2}}\left(1+\mathcal{O}\left(x\right)\right)\text{ and }\theta\left(x\right)=\frac{b\left(0\right)x^{1-\alpha}}{2-\alpha}\mathbb{I}_{\left(0,1\right)}\left(\alpha\right)+\mathcal{O}\left(x^{2-\alpha}\right),
\]
which implies that
\[
\begin{split}\frac{\theta\left(x\right)}{\phi\left(x\right)} & =\frac{\left(2-\alpha\right)b\left(0\right)}{x}\mathbb{I}_{\left(0,1\right)}\left(\alpha\right)+\mathcal{O}\left(1\right).\end{split}
\]
In addition, we also have that
\[
\begin{split}\frac{\theta^{\prime}\left(x\right)}{\phi^{\prime}\left(x\right)} & =b^{\prime}\left(x\right)-\frac{\left(x^{\alpha}a\left(x\right)\right)^{\prime\prime}}{2}+\frac{2b\left(x\right)-\left(x^{\alpha}a\left(x\right)\right)^{\prime}}{2}\left(\frac{1}{2\sqrt{\phi\left(x\right)x^{\alpha}a\left(x\right)}}-\frac{\left(x^{\alpha}a\left(x\right)\right)^{\prime}}{2x^{\alpha}a\left(x\right)}\right)\\
 & =b^{\prime}\left(x\right)+\frac{b\left(x\right)}{2\sqrt{\phi\left(x\right)x^{\alpha}a\left(x\right)}}-\frac{b\left(x\right)\left(x^{\alpha}a\left(x\right)\right)^{\prime}}{2x^{\alpha}a\left(x\right)}-\frac{\left(x^{\alpha}a\left(x\right)\right)^{\prime\prime}}{2}-\frac{\left(x^{\alpha}a\left(x\right)\right)^{\prime}}{4\sqrt{\phi\left(x\right)x^{\alpha}a\left(x\right)}}+\frac{\left(\left(x^{\alpha}a\left(x\right)\right)^{\prime}\right)^{2}}{4x^{\alpha}a\left(x\right)}.
\end{split}
\]
We notice that 
\[
-\frac{\left(x^{\alpha}a\left(x\right)\right)^{\prime\prime}}{2}-\frac{\left(x^{\alpha}a\left(x\right)\right)^{\prime}}{4\sqrt{\phi\left(x\right)x^{\alpha}a\left(x\right)}}+\frac{\left(\left(x^{\alpha}a\left(x\right)\right)^{\prime}\right)^{2}}{4x^{\alpha}a\left(x\right)}=\mathcal{O}\left(x^{\alpha-1}\right),
\]
and 
\[
\frac{b\left(x\right)}{2\sqrt{\phi\left(x\right)x^{\alpha}a\left(x\right)}}-\frac{b\left(x\right)\left(x^{\alpha}a\left(x\right)\right)^{\prime}}{2x^{\alpha}a\left(x\right)}=\mathbb{I}_{\left(0,1\right)}\left(\alpha\right)\left(\frac{b\left(0\right)\left(1-\alpha\right)}{x}+\mathcal{O}\left(x^{\alpha-1}\right)\right)+\mathcal{O}\left(1\right).
\]
Putting all the above together yields that when $\alpha\in\left(0,1\right)$,
\[
V\left(\phi\left(x\right)\right)=\frac{\left(1-\nu\right)\left(2-\alpha\right)b\left(0\right)}{2x}-\frac{\left(1-\alpha\right)b\left(0\right)}{2x}+\mathcal{O}\left(x^{\alpha-1}\right)=\frac{\alpha b\left(0\right)}{2x}+\mathcal{O}\left(x^{\alpha-1}\right);
\]
when $\alpha\in[1,2)$, $V\left(\phi\left(x\right)\right)$ is bounded
for $x\in\left(0,I\right)$. Thus, we have proven all the claims in
Lemma \ref{lem:estimates on VJ}.

Next, we look at the case when $b\left(x\right)\equiv0$, where most
of the expressions above take simpler forms. For example, 
\[
\Theta\left(\phi\left(x\right)\right)=\frac{x^{\frac{\alpha}{4}}a^{\frac{1}{4}}\left(x\right)}{2^{\frac{\alpha}{2\left(2-\alpha\right)}}\left(\phi\left(x\right)\right)^{\frac{\alpha}{4\left(2-\alpha\right)}}},\;V\left(\phi\left(x\right)\right)=-\frac{\alpha\left(\alpha-4\right)}{16\left(2-\alpha\right)^{2}\phi\left(x\right)}+\frac{\left(x^{\alpha}a\left(x\right)\right)^{\prime\prime}}{4}-\frac{3\left(\left(x^{\alpha}a\left(x\right)\right)^{\prime}\right)^{2}}{16},
\]
\[
\frac{\Theta\left(\phi\left(x\right)\right)}{\Theta\left(\phi\left(y\right)\right)}\phi^{\prime}\left(y\right)=\left(\frac{\phi\left(y\right)}{\phi\left(x\right)}\right)^{\frac{\alpha}{4\left(2-\alpha\right)}}\frac{\phi^{\frac{1}{2}}\left(y\right)x^{\frac{\alpha}{4}}a^{\frac{1}{4}}\left(x\right)}{y^{\frac{3\alpha}{4}}a^{\frac{3}{4}}\left(y\right)}\text{ and }\frac{\left(\phi\left(x\right)\right)^{1-\nu}}{\phi^{\prime}\left(x\right)}\Theta^{2}\left(\phi\left(x\right)\right)=2^{-\frac{\alpha}{2-\alpha}}x^{\alpha}a\left(x\right).
\]
In particular, if $a\left(x\right)=\left(1-x\right)^{\beta}$ as in
$\mathsection5$, then
\begin{equation}
\Theta\left(\phi\left(x\right)\right)=\frac{x^{\frac{\alpha}{4}}\left(1-x\right)^{\frac{\beta}{4}}}{2^{\frac{\alpha}{2\left(2-\alpha\right)}}}\left(\int_{0}^{x}\frac{ds}{\sqrt{s^{\alpha}\left(1-s\right)^{\beta}}}\right)^{\frac{-\alpha}{4\left(2-\alpha\right)}}\text{ with }\Theta_{I}=\frac{\left(\frac{2}{2-\alpha}\right)^{\frac{\alpha}{2\left(2-\alpha\right)}}}{\left(1-I\right)^{\frac{\beta}{2\left(2-\alpha\right)}}}.\label{eq:Theta_I(L) two-sided bdry case}
\end{equation}
Furthermore,
\[
\begin{split}V\left(\phi\left(x\right)\right) & =\frac{\alpha\left(4-\alpha\right)}{4\left(2-\alpha\right)^{2}}\left(\int_{0}^{x}\frac{ds}{\sqrt{s^{\alpha}\left(1-s\right)^{\beta}}}\right)^{-2}+\frac{\alpha\left(\alpha-4\right)}{16}x^{\alpha-2}\left(1-x\right)^{\beta}\\
 & \hspace{1cm}\hspace{1cm}\hspace{1cm}-\frac{\alpha\beta}{8}x^{\alpha-1}\left(1-x\right)^{\beta-1}+\frac{\beta\left(\beta-4\right)}{16}x^{\alpha}\left(1-x\right)^{\beta-2}.
\end{split}
\]
Since 
\begin{equation}
\frac{x^{2-\alpha}}{\left(2-\alpha\right)^{2}}\leq\phi\left(x\right)\leq\left(1-x\right)^{-\beta}\frac{x^{2-\alpha}}{\left(2-\alpha\right)^{2}},\label{eq:range of phi^(L)}
\end{equation}
we see that
\[
V\left(\phi\left(x\right)\right)\geq x^{\alpha-2}\left(1-x\right)^{\beta-2}\left(-\frac{\alpha\beta x\left(1-x\right)}{8}+\frac{\beta\left(\beta-4\right)}{16}x^{2}\right)
\]
and
\begin{align*}
V\left(\phi\left(x\right)\right) & \leq\frac{\alpha\left(4-\alpha\right)}{16}x^{\alpha-2}\left[1-\left(1-x\right)^{\beta}\right]\\
 & \hspace{1cm}\hspace{1cm}+x^{\alpha-2}\left(1-x\right)^{\beta-2}\left(-\frac{\alpha\beta x\left(1-x\right)}{8}+\frac{\beta\left(\beta-4\right)}{16}x^{2}\right)\\
 & \leq x^{\alpha-1}\left(1-x\right)^{\beta-2}\left(\frac{\alpha\left(4-\alpha\right)\beta}{16}-\frac{\alpha\beta\left(1-x\right)}{8}+\frac{\beta\left(\beta-4\right)}{16}x^{2}\right).
\end{align*}
where in the last line we used the fact that for every $x\in\left(0,I\right)$,
\[
1-\left(1-x\right)^{\beta}\leq\beta x\left(1-x\right)^{\beta-2}.
\]
Combining the upper bound and the lower bound of $V\left(\phi\left(x\right)\right)$
leads to 
\[
\left|V\left(\phi\left(x\right)\right)\right|\leq x^{\alpha-1}\left(1-x\right)^{\beta-2}\frac{\beta}{16}\left(4-\beta+2\alpha\right)\text{ for every }x\in\left(0,I\right),
\]
which, by (\ref{eq:range of phi^(L)}), implies that when $\alpha\in\left(0,1\right)$,
\[
\left|V\left(\phi\left(x\right)\right)\right|\leq\frac{\beta}{16}\left(4-\beta+2\alpha\right)\left(2-\alpha\right)^{\frac{2\alpha-2}{2-\alpha}}\left(1-I\right)^{\frac{\beta}{2-\alpha}-2}\left(\phi\left(x\right)\right)^{-\frac{1-\alpha}{2-\alpha}}\text{ for every }x\in\left(0,I\right).
\]
Therefore, with this specific case of $a\left(x\right)=\left(1-x\right)^{\beta}$,
we see that the constant $V_{I}$ as introduced Lemma \ref{lem:estimates on VJ}
(identified with $V_{J}$ in for $J=\phi\left(I\right)$) can be taken
as 
\begin{equation}
V_{I}=\frac{\beta}{16}\left(4-\beta+2\alpha\right)\left(1-I\right)^{\frac{\beta}{2-\alpha}-2}.\label{eq:VI in two-sided bdry case}
\end{equation}

\bibliographystyle{plain}
\bibliography{mybib}

\end{document}